\title{On Invariants
of $\text{C}^*$-algebras with the ideal property}
\author{Kun Wang}
\address{Department of Mathematics, Texas A\&M University\\
College Station, TX, US, 77843
}
\email{kwang@math.tamu.edu}
\subjclass[2000]{46L05}
\theoremstyle{plain}
\newtheorem{theorem}{Theorem}[section]
\newtheorem{corollary}[theorem]{Corollary}
\newtheorem{lemma}[theorem]{Lemma}
\newtheorem{proposition}[theorem]{Proposition}
\newtheorem{definition}[theorem]{Definition}
\theoremstyle{remark}
\newtheorem{remark}[theorem]{Remark}
\newtheorem{example}[theorem]{Example}
\begin{document}


\begin{abstract}
In this paper, we study the relation between the extended Elliott invariant and the Stevens invariant of $\text{C}^*$-algebras.
We show that in general the Stevens invariant can be derived from the extended Elliott invariant in a functorial manner.
We also show that these two invariants are isomorphic for $\text{C}^*$-algebras satisfying the ideal property. 
A $\text{C}^*$-algebra is said to have the ideal property if each of its closed two-sided ideals is generated by projections inside the ideal.
Both simple, unital  $\text{C}^*$-algebras and real rank zero  $\text{C}^*$-algebras have the ideal property.
As a consequence, many classes of non-simple  $\text{C}^*$-algebras can be classified by their extended Elliott invariants, 
which is a generalization of Elliott's conjecture. 
\end{abstract}

\maketitle


\newcommand\sfrac[2]{{#1/#2}}
\newcommand\cont{\operatorname{cont}}
\newcommand\diff{\operatorname{diff}}

\section{Introduction}
George Elliott initiated the classification program of nuclear $\text{C}^*$-algebras 
since his  classification of approximately finite-dimensional (AF) algebras via
their scaled, ordered $\text{K}_0$-groups (\cite{E1}).
Successful classification results have been obtained for
 $\textrm{AH}$ algebras (the inductive
limits of matrix algebras over metric spaces) with slow dimension growth for cases of  real rank zero (see \cite{EGLP 1}, \cite{EGLP 2}, \cite{EG}, \cite{EG 2}, \cite{Da}, \cite{Go 1}, \cite{Go 2}, \cite{DL}, \cite{Ei}, \cite{DG})  and simple $\textrm{AH}$
algebras (see \cite{E}, \cite{Ell 1}, \cite{Li 1}, \cite{Go 3}, \cite{EGL})  by using the so called Elliott invariant, which consists of
the ordered $\text{K}_0$-group, the $\text{K}_1$-group, the simplex of tracial state space and the natural pairing beween the tracial state space and the $\text{K}_0$-group.

A $\text{C}^*$-algebra is said to have the ideal property if each of its closed two-sided ideals is generated (as a closed two-sided ideal) by projections inside the ideal.
It is obvious  that both simple, unital  $\text{C}^*$-algebras and real rank zero  $\text{C}^*$-algebras have the ideal property.
There are many other examples of $\text{C}^*$-algebras arising from dynamical systems which have the ideal property (see \cite{GJLP}, \cite{Pa2}, \cite{Pa3}, \cite{PP1}, \cite{PP2}, \cite{PR}, etc.).
In 1995,
K. Stevens classified all unital approximately divisible
$\textrm{AI}$ algebras with the ideal property (\cite{Ste}). 
Conel Pasnicu studied
$\text{C}^*$-algebras with the ideal property and obtained a
characterization theorem for $\textrm{AH}$ algebras with the ideal property which are quite useful for classification theory (see \cite{Pa1}).
In 2011, K. Ji and C. Jiang improved  Stevens' result
by dropping the conditions unital and approximate divisible (see \cite{JJ}). 
Subsequently, Jiang and the present author completely classified all inductive limits of splitting interval algebras (\text{ASI}) with the ideal property (\cite{JW}).
AI algebra is a special case of $\text{ASI}$ algebra.
The invariant we used to classify \textrm{ASI} algebras in our paper was first proposed by Stevens. 
 We  call it  Stevens invariant.
Stevens invariant of a $\text{C}^*$-algebra  $A$ consists of the $\text{K}_0$-group of $A$, the $\text{K}_1$-group of $A$ and the  tracial state spaces of all hereditary $\text{C}^*$-subalgebras of the form $\overline{eAe}$ with certain compatibility conditions, where $e$ is any projection in $A$. 
Stevens invariant is also used to classify  $\text{AH}$-algebras with the ideal property (see \cite{GJL}).

We know that for simple $\text{C}^*$-algebras, traces are assumed to be bounded in the unital cases, 
and lower semicontinuous and densely defined in the non-unital case. 
But these two kinds of traces will not suffice for the classification of non-simple $\text{C}^*$-algebras. That is, for non-simple $\text{C}^*$-algebras, in many cases,
 all finite traces or densely defined lower semi-continuous traces are identically zero on a proper ideal. 
 Therefore, neither finite traces nor densely defined traces can  give information of the ideals. 
In this paper, we propose to include the extended valued traces (the value could be infinity) in the traditional Elliott invariant---called {\bf Extended Elliott Invariant.}
Another sign for considering the extended valued traces is that all lower semicontinuous traces on a $\text{C}^*$-algebra
constitute a non-cancellative cone that in particular determines the lattice of closed two-sided ideals, an important invariant in its own right. 

 It is natural to ask what is the connection between  the extended Elliott invariant and the Stevens invariant?
 Does the extended Elliott invariant still work for classifying $\text{C}^*$-algebras with the ideal property?
In this paper, we partially answered these two questions.
The following theorems are our main results in this paper:
\begin{theorem}\label{mainthm1}
Let $A$ be a $\text{C}^*$-algebra with the ideal property. 
Then the Stevens invariant of $A$ is equivalent to the extended Elliott invariant of $A$.
\end{theorem}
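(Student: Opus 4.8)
The plan is to compare the two invariants component by component. The Stevens invariant and the extended Elliott invariant of $A$ share the same ordered group $(\text{K}_{0}(A),\text{K}_{0}(A)^{+})$, the same group $\text{K}_{1}(A)$, and a pairing of $\text{K}_{0}$ with traces, so the entire content of the statement is a natural isomorphism between the \emph{trace components} that intertwines the two $\text{K}_{0}$-pairings. On the extended Elliott side the trace component is the cone $\widetilde{T}(A)$ of lower semicontinuous traces $A\otimes\mathcal{K}\to[0,\infty]$; on the Stevens side it is the system of tracial state spaces $T(\overline{e(A\otimes\mathcal{K})e})$ of the corners, indexed by projections $e$, together with the connecting maps --- restriction of a tracial state along an inclusion $\overline{f\,\cdot\,f}\hookrightarrow\overline{e\,\cdot\,e}$ when $f\le e$, and conjugation $\sigma\mapsto\sigma(v^{*}\,\cdot\,v)$ by a partial isometry $v$ implementing $e\sim f$ --- organized as a compatible family of bounded traces on the corners. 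There is a natural map from the extended Elliott trace component to the Stevens one whose construction does not use the ideal property: send $\tau\in\widetilde{T}(A)$ to the family $e\mapsto\tau|_{\overline{e(A\otimes\mathcal{K})e}}$, renormalized to a state when $0<\tau(e)<\infty$. This map is functorial in $A$ and respects the $\text{K}_{0}$-pairings, because $\tau(e)=\langle\tau,[e]\rangle$. The task is to prove that, when $A$ has the ideal property, this map is an isomorphism, by constructing its inverse.

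To build the inverse, start from a compatible family $(\tau_{e})_{e}$ of bounded positive traces on the corners and define $\tau\colon(A\otimes\mathcal{K})_{+}\to[0,\infty]$ by $\tau(a)=\sup_{e}\tau_{e}(eae)$, the supremum over all projections $e$. The role of the ideal property is precisely to guarantee that the corners are large enough for this recipe to return a genuine lower semicontinuous trace: every closed two-sided ideal of $A$ --- in particular the definition ideal of any lower semicontinuous trace --- is generated by the projections it contains, so each positive element of such an ideal is, in the sense of Cuntz comparison, exhausted by elements supported in corners $\overline{e(A\otimes\mathcal{K})e}$. Granting this, one checks that $\tau$ is additive, lower semicontinuous, and satisfies the trace identity --- the conjugation-compatibility of the family yields the trace identity on cutdowns by matrix units, while the restriction-compatibility yields consistency of the values $\tau_{e}(eae)$ as $e$ grows --- that $\tau$ restricts on each corner to the given $\tau_{e}$, and conversely that every lower semicontinuous trace is recovered from its family of corner restrictions. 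One then verifies that the resulting correspondence is an affine homeomorphism for the natural cone structures and topologies, carries order units to order units, and commutes with the $\text{K}_{0}$-pairings; together with the literal coincidence of the $\text{K}$-theory data this gives the claimed equivalence of invariants, and the naturality of all the maps makes the equivalence functorial.

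I expect the reconstruction step to be the main obstacle. Since the ideal property does not provide an approximate unit of projections inside an arbitrary hereditary subalgebra of $A$ --- only the weaker statement that two-sided ideals are generated by their projections --- the approximation of a general positive element by corner-supported elements cannot be performed in norm, but must be carried out at the level of the Cuntz semigroup: one works with the cutdowns $(a-\varepsilon)_{+}$ and shows that each is Cuntz-dominated by a projection lying in the relevant ideal. Running this argument while simultaneously keeping the compatibility relations of the family $(\tau_{e})_{e}$ under control --- so that well-definedness of $\tau(a)=\sup_{e}\tau_{e}(eae)$, additivity, lower semicontinuity, and the trace identity all hold at once, and so that the definition ideal of the reconstructed $\tau$ turns out to be exactly the ideal forced by the family --- is the technical heart of the argument. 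Once the correspondence of trace components is established, the remaining verifications (functoriality, compatibility with the pairing, and the order- and topology-theoretic bookkeeping) are routine.
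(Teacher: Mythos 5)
Your high-level architecture coincides with the paper's: the $\mathrm{K}$-theoretic data literally agree, the easy direction is restriction of a lower semicontinuous trace to corners (no ideal property needed), and the hard direction is reconstruction of a global trace from a compatible family of finite traces on corners, with the ideal property entering only there. However, the reconstruction step --- which you yourself flag as ``the technical heart'' --- is not actually carried out, and the formula you propose for it, $\tau(a)=\sup_{e}\tau_{e}(eae)$, is the wrong one. To get $\tau(a)\leq\sup_{e}\tau_{e}(eae)$ by lower semicontinuity you would need a net of projections with $e_{\lambda}ae_{\lambda}\to a$, i.e.\ an approximate unit of projections for the hereditary subalgebra generated by $a$; as you note, the ideal property only yields that the closed two-sided ideal generated by $a$ is generated by projections, i.e.\ $a$ is a limit of sums of elements of the form $xey$, not of cutdowns $eae$. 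The way around this (the paper's Lemma \ref{equa}) is to use the trace identity $\phi(xey)=\phi(eyxe)$ to move the value of $\phi$ on a generic element of $\overline{AeA}$ into the corner $\overline{eAe}$, which shows that the Pedersen-type extension $\phi(x)=\sup\{\phi(y):y\in(\overline{eAe})_{+},\,y\preccurlyeq x\}$ (Theorem \ref{extend}) recovers the trace on all of $\overline{AeA}$, with $\preccurlyeq$ the Cuntz--Pedersen relation rather than literal corner compression. You gesture at this (``must be carried out at the level of the Cuntz semigroup'') but do not supply the argument, so as written the inverse map is not defined.

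Two further ingredients of the reconstruction are missing from your plan. First, consistency: when a positive element lies in $\overline{Ae_{1}A}\cap\overline{Ae_{2}A}$ for incomparable projections $e_{1},e_{2}$, one must check that the two extensions agree; ``restriction-compatibility as $e$ grows'' does not cover this, since $e_{1}$ and $e_{2}$ need not have a common upper bound among projections. The paper reduces to projections $q$ in the intersection ideal (using the ideal property again), dominates $q$ by multiples $np_{i}$ via Lemma \ref{lin}, and invokes Lemma \ref{equa}. Second, globalization: having consistent traces on each $\overline{AeA}$, one still needs a single lower semicontinuous trace on the ideal generated by all the relevant projections; the paper does this by a Zorn's lemma patching argument (Lemma \ref{wd}), and then sets the trace equal to $+\infty$ off that ideal (which is where Lemma \ref{proj} and the ideal property guarantee that the definition ideal of the reconstructed trace is exactly the one forced by the family). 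Until the correct extension formula, the consistency check on overlapping ideals, and the patching step are supplied, the proposal is an outline of the right strategy rather than a proof.
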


\begin{theorem}\label{mainthm2}
 Let $A,~B$ be two $\text{C}^*$-algebras with the ideal property. If $A$ and $B$ have isomorphic extended Elliott Invariant, then $A$ and $B$ have isomorphic  Stevens Invariant---and vise versa.
\end{theorem}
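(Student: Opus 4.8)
The plan is to deduce Theorem \ref{mainthm2} from Theorem \ref{mainthm1} by a formal functoriality argument: the key is that the correspondence between the two invariants established there is natural with respect to isomorphisms, so an isomorphism of one invariant is transported to an isomorphism of the other. Write $\text{Ell}^{+}(A)$ for the extended Elliott invariant and $\text{Stev}(A)$ for the Stevens invariant of $A$. By the discussion preceding the statements (and by the abstract) there is a construction $\Phi$ producing, functorially and for \emph{every} $\text{C}^*$-algebra, the Stevens invariant from the extended Elliott invariant; Theorem \ref{mainthm1} asserts that, when $A$ has the ideal property, $\Phi$ is part of an equivalence, with an inverse construction $\Psi$ recovering $\text{Ell}^{+}(A)$ from $\text{Stev}(A)$ and natural isomorphisms $\Psi(\Phi(\,\cdot\,))\cong\text{id}$ and $\Phi(\Psi(\,\cdot\,))\cong\text{id}$. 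Only $\Psi$ uses the ideal property, via the fact that a lower semicontinuous $[0,\infty]$-valued trace on $A$ is determined by, and can be reassembled from, its restrictions to the corners $\overline{eAe}$ with $e$ a projection in a matrix algebra over $A$.

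For the implication ``$\text{Ell}^{+}(A)\cong\text{Ell}^{+}(B)$ implies $\text{Stev}(A)\cong\text{Stev}(B)$'' I would take an isomorphism $\varphi\colon\text{Ell}^{+}(A)\to\text{Ell}^{+}(B)$ --- an order isomorphism of the $\text{K}_0$-data, an isomorphism of $\text{K}_1$, and an affine isomorphism of the cones of lower semicontinuous traces intertwining the pairings and respecting the distinguished data (the class of the unit in the unital case) --- and apply $\Phi$ to obtain $\Phi(\varphi)\colon\text{Stev}(A)\to\text{Stev}(B)$. Concretely, $\varphi$ sends the Murray--von Neumann class of a projection over $A$ to that of a projection over $B$, and restricting the trace-cone isomorphism to the corresponding corners gives the compatible family of affine homeomorphisms $T(\overline{eAe})\to T(\overline{fBf})$ required by a Stevens isomorphism; the Stevens compatibility relations are preserved because they are read off from the same $\text{K}_0$-pairing. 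As the abstract indicates, this direction uses only functoriality of $\Phi$, not the ideal property.

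For the converse, given an isomorphism $\psi\colon\text{Stev}(A)\to\text{Stev}(B)$, I would apply $\Psi$ and then compose with the natural isomorphisms $\text{Ell}^{+}(A)\cong\Psi(\text{Stev}(A))$ and $\text{Ell}^{+}(B)\cong\Psi(\text{Stev}(B))$ from Theorem \ref{mainthm1}, obtaining an isomorphism $\text{Ell}^{+}(A)\to\text{Ell}^{+}(B)$. The one substantive point is to verify that the compatible family of corner-trace homeomorphisms underlying $\psi$ glues to a single affine bijection between the cones of lower semicontinuous $[0,\infty]$-valued traces which is order- and suprema-preserving (hence respects lower semicontinuity) and compatible with the $\text{K}_0$-pairing --- but this gluing is precisely what is done in the proof of Theorem \ref{mainthm1}, now applied to $\psi$ rather than to an identity map, so no new difficulty arises.

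The main obstacle therefore sits entirely inside Theorem \ref{mainthm1}: one must arrange $\Phi$ and $\Psi$ so that the passage between ``extended lower semicontinuous traces on $A$'' and ``compatible families of bounded traces on all corners of $A$'' is genuinely functorial, i.e.\ carries isomorphisms to isomorphisms and not merely objects to objects. Granting that, Theorem \ref{mainthm2} is immediate.
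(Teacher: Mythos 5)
Your proposal follows the paper's strategy: the paper deduces Theorem \ref{mainthm2} as an immediate corollary of Theorem \ref{isom}, which upgrades the object-level correspondence underlying Theorem \ref{mainthm1} to a pair of mutually inverse functors $\mathcal{F}$ and $\mathcal{G}$ between the categories $\mathcal{S}_{\mathcal{I}}$ and $\mathcal{E}_{\mathcal{I}}$ --- exactly the functoriality you single out as the one remaining obstacle. The two directions you sketch (pushing the trace-cone map down to corners after canonically extending corner traces, and gluing a compatible family of finite corner traces into a single lower semicontinuous $[0,\infty]$-valued trace using the ideal property) are precisely Steps II and I of that proof, so the approach is essentially the same.
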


The paper is organized as follows. In Section 2, we recall some definitions and lemmas. 
In Section 3, we define two categories $\mathcal{S}$ and $\mathcal{E}$ corresponding to  Stevens invariant and extended Elliott invariant  respectively.
We show that there are canonical non-trivial maps between the object set of $\mathcal{E}$ and the object set of $\mathcal{S}$.  
Moreover, the Stevens invariant of a $\text{C}^*$-algebra can always be derived from its extended Elliott invariant.
The converse is  true when the 
$\text{C}^*$-algebra has the ideal property.
In Section 4, we extend the maps defined in Section 3 to be functors between 
two sub-categories of $\mathcal{S}$ and $\mathcal{E}$ and prove  Theorem 1.2.
Finally, we show that there is a class of $\text{C}^*$-algebras without the ideal property whose extended Elliott invariants  cannot be derived from their Stevens invariants. \\

{\bf Acknowledgement.}
The result of this paper is part of my thesis. 
I benefit a lot from discussions with my advisor, Professor Guihua Gong in writing this paper. 
I would like to express my gratitude to him, for his support, patience, and encouragement throughout my graduate studies.

\section{Preliminaries}
For convenience of the reader, we recall some definitions and lemmas (see \cite{Pedersen} for more details).

\begin{definition}\label{weight} Let $A$ be a $\text{C}^*$-algebra. A weight on $A$ is a function $\phi:A_+\rightarrow [0,+\infty]$ such that\\
  (i) $\phi(\alpha x)=\alpha\phi(x)$, if $x\in A_+$ and $\alpha\in \mathbb{R}_+$;\\
 (ii) $\phi(x+y)=\phi(x)+\phi(y)$, if $x$ and $y$ belong to $A_+$.\\
Moreover, $\phi$ is lower semi-continuous if for each $\alpha\in\mathbb{R}_+$ the set $$\{x\in A_+|~\phi(x)\leq\alpha\}$$ is closed.
\end{definition}

\begin{definition}\label{trace} Let $A$ be a $\text{C}^*$-algebra. A trace on $A$ is a weight $\phi$ such that $\phi(u^*xu)=\phi(x)$ for all $x\in A_+$ and all unitary $u\in \widetilde{A}$, where $\widetilde{A}$ is the unitization of $A$.
\end{definition}

\begin{remark}
In this paper, we denote by $\text{T}(A)$ the collection of all lower semicontinuous traces on $A$. This set is a non-cancellative cone endowed with operations of pointwise addition and pointwise scalar multiplication by strictly positive real numbers (see \cite{ERS} for details). Let $\text{T}_{\text{F}}(A)$ denote the set of all finite traces on $A$.
\end{remark}

The following two propositions are properties of traces quoted from \cite{Pedersen}.

\noindent Let $A$ be a $\text{C}^*$-algebra and $\phi$ be a trace on $A$. Let $A_+^{\phi}$ be a subset of $A_+$ defined by $$A_+^{\phi}:=\{x\in A_+|~\phi(x)<\infty\}.$$
\begin{proposition}\label{fin}(see 5.1.2 of \cite{Pedersen}) For each trace $\phi$ on a $C^*$-algebra $A$ the linear span $A^{\phi}$ of $A_+^{\phi}$ is a not necessary closed ideal of $A$ with $(A^{\phi})_+=A_+^{\phi}$, and there is a unique extension of $\phi$ to a positive linear functional on $A^{\phi}$. Moreover, the set $A_2^{\phi}=\{x\in A|~x^*x\in A_+^{\phi}\}$ is an ideal of $A$ such that $y^*x\in A^{\phi}$ for any $x,y\in A_2^{\phi}$. 

\end{proposition}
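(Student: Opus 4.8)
\medskip

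\noindent{\bf Proof proposal.} The plan is to isolate the single genuinely trace-theoretic ingredient and to obtain everything else by formal manipulation of the cone $A_+^{\phi}$ and of the set $A_2^{\phi}$.

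First I would record the elementary structure of $A_+^{\phi}$. By (i)--(ii) of Definition~\ref{weight} it is a cone, and it is \emph{order-hereditary}: if $0\le y\le x$ with $x\in A_+^{\phi}$, then $x-y\in A_+$ and $\phi(x)=\phi(y)+\phi(x-y)\ge\phi(y)$, so $y\in A_+^{\phi}$. In particular, any positive element $w$ of $A^{\phi}=\operatorname{span}A_+^{\phi}$ may be written (using $w=w^{*}$) as $a-b$ with $a,b$ finite positive combinations of elements of $A_+^{\phi}$, whence $0\le w\le a\in A_+^{\phi}$ and so $w\in A_+^{\phi}$; this gives the identity $(A^{\phi})_{+}=A_+^{\phi}$. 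I would also note that $A_2^{\phi}$ is a linear subspace and a left ideal: homogeneity of $z\mapsto z^{*}z$ is clear, the parallelogram identity $(x+y)^{*}(x+y)+(x-y)^{*}(x-y)=2x^{*}x+2y^{*}y$ together with order-heredity gives $x+y\in A_2^{\phi}$, and $(ax)^{*}(ax)\le\|a\|^{2}x^{*}x$ gives $Ax\subseteq A_2^{\phi}$. The polarization identity $y^{*}x=\frac14\sum_{k=0}^{3}i^{k}(x+i^{k}y)^{*}(x+i^{k}y)$ then shows $y^{*}x\in A^{\phi}$ for all $x,y\in A_2^{\phi}$; and since $x=(x^{1/2})^{*}x^{1/2}$ with $x^{1/2}\in A_2^{\phi}$ whenever $x\in A_+^{\phi}$, this sharpens to $A^{\phi}=\operatorname{span}\bigl((A_2^{\phi})^{*}A_2^{\phi}\bigr)$.

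The main obstacle is the one trace lemma that is actually needed: the unitary invariance of Definition~\ref{trace} must be promoted to the identity $\phi(zz^{*})=\phi(z^{*}z)$ for every $z\in A$ (for our purposes it is enough that the two are simultaneously finite). This is the classical fact about traces (see \cite{Pedersen}); the route I know does not merely unfold the definition but tests unitary invariance against the unitaries $u=h+i(1-h^{2})^{1/2}\in\widetilde A$, $h\in A_{+}$, $\|h\|\le 1$, and then passes between $zz^{*}$ and $z^{*}z$ by an approximate polar decomposition $z=z(z^{*}z+\varepsilon)^{-1/2}(z^{*}z+\varepsilon)^{1/2}$, so this is where I expect the real work to lie. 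Granting the lemma: for $x\in A_+^{\phi}$ and $a\in A$, with $z=x^{1/2}a$ one has $\phi(a^{*}xa)=\phi(z^{*}z)=\phi(zz^{*})=\phi\bigl(x^{1/2}aa^{*}x^{1/2}\bigr)$, and since $x^{1/2}aa^{*}x^{1/2}\le\|a\|^{2}x\in A_+^{\phi}$, order-heredity yields $\phi(a^{*}xa)\le\|a\|^{2}\phi(x)<\infty$. Thus $A_+^{\phi}$ is invariant under $x\mapsto a^{*}xa$; hence $A_2^{\phi}$ is a right ideal as well, so it is a two-sided ideal of $A$; and then $A^{\phi}=\operatorname{span}\bigl((A_2^{\phi})^{*}A_2^{\phi}\bigr)$, being spanned by products of the two-sided ideals $(A_2^{\phi})^{*}$ and $A_2^{\phi}$, is a two-sided ideal of $A$.

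It remains to produce the (necessarily unique) positive linear extension. By definition each $x\in A^{\phi}$ is a finite sum $\sum_{j}\lambda_{j}x_{j}$ with $\lambda_{j}\in\mathbb{C}$, $x_{j}\in A_+^{\phi}$, and I would set $\phi(x):=\sum_{j}\lambda_{j}\phi(x_{j})$. For well-definedness it suffices to show that $\sum_{j}c_{j}x_{j}=0$ with all $c_{j}\in\mathbb{R}$ forces $\sum_{j}c_{j}\phi(x_{j})=0$ (a complex relation splits, using that the $x_{j}$ are self-adjoint, into two such real relations): regrouping, $\sum_{c_{j}>0}c_{j}x_{j}=\sum_{c_{j}<0}|c_{j}|x_{j}$ is a single element of $A_+^{\phi}$, and applying (i)--(ii) of Definition~\ref{weight} to each side gives $\sum_{c_{j}>0}c_{j}\phi(x_{j})=\sum_{c_{j}<0}|c_{j}|\phi(x_{j})$. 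Linearity is then formal; positivity holds because $(A^{\phi})_{+}=A_+^{\phi}$ forces the assigned value on a positive element to coincide with the original (nonnegative) value of $\phi$, which also shows that $\phi$ is indeed being extended. Uniqueness is automatic, as any linear functional extending $\phi$ is determined on $\operatorname{span}A_+^{\phi}=A^{\phi}$. Apart from the trace lemma of the third paragraph, every step here is routine cone-and-ideal bookkeeping.
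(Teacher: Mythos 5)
The paper does not prove this proposition at all: it is quoted verbatim from Pedersen (5.1.2 of \cite{Pedersen}) as background, so there is no in-paper argument to compare against. Judged on its own terms, your proposal gets all of the ``cone-and-ideal bookkeeping'' right --- the heredity of $A_+^{\phi}$, the identity $(A^{\phi})_+=A_+^{\phi}$, the parallelogram/polarization arguments making $A_2^{\phi}$ a linear subspace and left ideal with $\operatorname{span}\bigl((A_2^{\phi})^*A_2^{\phi}\bigr)=A^{\phi}$, and the well-definedness, positivity and uniqueness of the extension. But the one step you single out as ``the real work'' is mislocated, and as written it is a genuine gap.

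You reduce the two-sidedness of $A_2^{\phi}$ (hence of $A^{\phi}$) to the identity $\phi(z^*z)=\phi(zz^*)$ for all $z\in A$. That identity is Pedersen's 5.2.2 (Proposition~\ref{comm} in this paper), and there it is established only for \emph{lower semicontinuous} traces --- no lower semicontinuity is assumed in Proposition~\ref{fin} --- and moreover its proof in Pedersen comes \emph{after} and relies on 5.1.2, so invoking it here is circular within that development. The sketch you give (approximate polar decomposition $z=z(z^*z+\varepsilon)^{-1/2}(z^*z+\varepsilon)^{1/2}$ and letting $\varepsilon\to 0$) is exactly where lower semicontinuity is needed, so ``granting the lemma'' is not legitimate at this stage. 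The repair is both easy and strictly weaker than what you reached for: use only the unitary invariance in Definition~\ref{trace}. For $x\in A_2^{\phi}$ and a unitary $u\in\widetilde A$ one has $(xu)^*(xu)=u^*(x^*x)u$, so $\phi\bigl((xu)^*(xu)\bigr)=\phi(x^*x)<\infty$ and $xu\in A_2^{\phi}$; since every element of $\widetilde A$ is a linear combination of four unitaries ($a=b+ic$ with $b,c$ self-adjoint and $b=\tfrac12(v+v^*)$, $v=b+i(1-b^2)^{1/2}$ after scaling) and you have already shown $A_2^{\phi}$ is a linear subspace, $A_2^{\phi}$ is a right ideal, hence two-sided; then $a(y^*x)=(ya^*)^*x$ and $(y^*x)a=y^*(xa)$ show $A^{\phi}=\operatorname{span}\bigl((A_2^{\phi})^*A_2^{\phi}\bigr)$ is a two-sided ideal. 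With that substitution your argument is complete.
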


\begin{proposition}\label{comm}(see 5.2.2 of \cite{Pedersen}) If $\phi$ is a trace on a $C^*$-algebra $A$ then $\phi(yx)=\phi(xy)$ for each $x$ in $A^{\phi}$  and $y$ in $\widetilde{A}$. If moreover $\phi$ is lower semi-continuous then $\phi(x^*x)=\phi(xx^*)$ for all $x$ in $A$ and $\phi(xy)=\phi(yx)$ for all $x$ and $y$ in $A_2^{\phi}$.
\end{proposition}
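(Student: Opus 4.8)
The plan is to establish the three assertions in turn, using each one to reach the next.

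\emph{The algebraic commutation law.} To prove $\phi(yx)=\phi(xy)$ for $x\in A^{\phi}$, $y\in\widetilde A$, I would first observe that $A^{\phi}$, which by Proposition~\ref{fin} is the linear span of $A_{+}^{\phi}$ and an ideal of $A$, is in fact a self-adjoint ideal of $\widetilde A$, and that the unique linear extension of $\phi$ to $A^{\phi}$ remains invariant under conjugation by unitaries of $\widetilde A$: this is immediate on $A_{+}^{\phi}$ from Definition~\ref{trace} and extends to $A^{\phi}$ by linearity. Since every element of the unital C$^{*}$-algebra $\widetilde A$ is a linear combination of unitaries and $\phi$ is linear on $A^{\phi}$, it suffices to take $y=u$ unitary. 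Then $ux\in A^{\phi}$ and $xu=u^{*}(ux)u$, so $\phi(xu)=\phi\!\left(u^{*}(ux)u\right)=\phi(ux)$ by unitary invariance; writing $y$ back as a combination of unitaries finishes this part.

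\emph{The identity $\phi(x^{*}x)=\phi(xx^{*})$ for lower semicontinuous $\phi$.} First I would record a monotone convergence principle: if $h_{n}\in A_{+}$ increase in norm to $h\in A_{+}$, then $\phi(h_{n})\to\phi(h)$ in $[0,+\infty]$ --- indeed $\phi(h_{n})\le\phi(h)$ by additivity and positivity of $\phi$, and if $\sup_{n}\phi(h_{n})=\beta<\phi(h)$ then every $h_{n}$ lies in the closed set $\{z\in A_{+}:\phi(z)\le\beta\}$, forcing $\phi(h)\le\beta$. Now suppose $\phi(x^{*}x)<\infty$ and put $e_{n}:=(x^{*}x)\big(x^{*}x+\tfrac1n\big)^{-1}\in A_{+}$. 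From $e_{n}\le n\,x^{*}x$ we get $e_{n}\in A_{+}^{\phi}$, hence also $e_{n}^{2}\in A_{+}^{\phi}$ and $xe_{n}^{2}\in A^{\phi}$. Using $e_{n}\in C^{*}(x^{*}x)$ and the intertwiner $x\,g(x^{*}x)=g(xx^{*})\,x$ one finds
\[
e_{n}x^{*}xe_{n}=(x^{*}x)^{3}\big(x^{*}x+\tfrac1n\big)^{-2},\qquad xe_{n}^{2}x^{*}=(xx^{*})^{3}\big(xx^{*}+\tfrac1n\big)^{-2},
\]
and these increase in norm to $x^{*}x$ and $xx^{*}$ respectively (the norm gap from the limit is $O(1/n)$). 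On the other hand, applying the first part with $xe_{n}^{2}\in A^{\phi}$ in place of $x$ and $x^{*}$ in place of $y$ yields $\phi\big(e_{n}x^{*}xe_{n}\big)=\phi\big((x^{*}x)e_{n}^{2}\big)=\phi\big(x^{*}(xe_{n}^{2})\big)=\phi\big((xe_{n}^{2})x^{*}\big)=\phi\big(xe_{n}^{2}x^{*}\big)$. Letting $n\to\infty$ and invoking monotone convergence on both sides gives $\phi(x^{*}x)=\phi(xx^{*})$ (and in particular $\phi(xx^{*})<\infty$). If instead $\phi(x^{*}x)=\infty$, then applying this finite case to $x^{*}$ shows that $\phi(xx^{*})<\infty$ would force $\phi(xx^{*})=\phi(x^{*}x)=\infty$, so $\phi(xx^{*})=\infty=\phi(x^{*}x)$. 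The point where care is needed is entirely the bookkeeping: tracking which elements genuinely lie in $A^{\phi}$ (so that the first part and linearity of the extension apply), and that the approximants converge \emph{in norm}, not merely strongly --- this is exactly why the finite case, where $e_{n}\in A_{+}^{\phi}$ is available, must be handled first and the infinite case reduced to it, rather than, say, using spectral projections of $x^{*}x$, which need not lie in $A$.

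\emph{Commutation on $A_{2}^{\phi}$.} Knowing $\phi(z^{*}z)=\phi(zz^{*})$ for all $z\in A$, the ideal $A_{2}^{\phi}$ of Proposition~\ref{fin} is self-adjoint, since $z\in A_{2}^{\phi}$ iff $\phi(z^{*}z)<\infty$ iff $\phi(zz^{*})<\infty$ iff $z^{*}\in A_{2}^{\phi}$. For $x,y\in A_{2}^{\phi}$ each $x^{*}+i^{k}y$ ($k=0,1,2,3$) lies in $A_{2}^{\phi}$, so both $(x^{*}+i^{k}y)^{*}(x^{*}+i^{k}y)$ and $(x^{*}+i^{k}y)(x^{*}+i^{k}y)^{*}$ lie in $A_{+}^{\phi}$, on which $\phi$ is linear, while $xy,yx\in A^{\phi}$ by Proposition~\ref{fin}. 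Substituting the polarization identities $4\,xy=\sum_{k=0}^{3}i^{-k}(x^{*}+i^{k}y)^{*}(x^{*}+i^{k}y)$ and $4\,yx=\sum_{k=0}^{3}i^{-k}(x^{*}+i^{k}y)(x^{*}+i^{k}y)^{*}$ and flipping each of the four summands by the identity just proved gives $\phi(xy)=\phi(yx)$, completing the proof.
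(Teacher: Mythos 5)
The paper offers no proof of this proposition; it is quoted verbatim from Pedersen (5.2.2 of \cite{Pedersen}) and used as a black box. Your argument is correct and follows the standard (essentially Pedersen's) route: reduce the first identity to unitaries via the span of unitaries in $\widetilde{A}$ and unitary invariance of the extension of $\phi$ to $A^{\phi}$; obtain $\phi(x^{*}x)=\phi(xx^{*})$ from the functional-calculus approximants $e_{n}=(x^{*}x)(x^{*}x+\tfrac1n)^{-1}$ together with the lower-semicontinuity/monotone-convergence argument (correctly handling the finite case first so that $e_{n}\in A_{+}^{\phi}$ makes part one applicable, and reducing the infinite case to it); and deduce the trace property on $A_{2}^{\phi}$ by polarization. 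The bookkeeping you flag (membership in $A^{\phi}$, norm rather than strong convergence of the approximants, and the $2/n$ bound $t-t f_{n}(t)^{2}\le 2/n$) all checks out, so nothing further is needed.
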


Next we want to discuss how to extend traces.
\begin{definition} We define an equivalence relation in $A_+$ by setting $x\approx y$ if there is a finite set $\{z_n\}$ in $A$ such that $x=\sum z_n^*z_n$ and $y=\sum z_nz_n^*$.  And we use the notation $y\preccurlyeq x$ to mean $y\approx x_1,~x_1\leq x.$
\end{definition}
\begin{theorem}\label{extend}(see 5.2.7 of \cite{Pedersen}) Let $B$ be a hereditary $C^*$-subalgebra of a $C^*$-algebra $A$, and let $\phi$ be a lower semi-continuous weight on $B$. For each $x$ in $A_+$ define $$\widetilde{\phi}(x)=\sup\{\phi(y)|~y\in B_+,y\preccurlyeq x\}.$$ Then $\widetilde{\phi}$ is a lower semi-continuous trace on $A$ and $\widetilde{\phi}|_{B_+}$ is the smallest trace dominating $\phi$.
\end{theorem}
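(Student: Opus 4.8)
The plan is to reduce everything to a short list of standard facts about the relation $\preccurlyeq$ and the cut-downs $a\mapsto(a-\varepsilon)_+$, after which verifying the weight and trace axioms for $\widetilde\phi$ is essentially bookkeeping. First I would record the elementary properties of $\preccurlyeq$: straight from the definition, $y\le x\Rightarrow y\preccurlyeq x$ and $y\approx x\Rightarrow y\preccurlyeq x$, the relation is compatible with multiplication by positive scalars, and it is compatible with finite sums since majorizing families can be rescaled and concatenated, so $y_i\approx y_i'\le x_i$ gives $y_1+y_2\approx y_1'+y_2'\le x_1+x_2$. The facts that carry real content, which I would quote from \cite{Pedersen} (the Cuntz-comparison machinery of its §1.4), are: (a) $\preccurlyeq$ is transitive and absorbs $\approx$ on both sides, so in particular $a\le b$ and $b\approx c$ imply $a\preccurlyeq c$; (b) a Riesz-type decomposition with defect: if $\sum_n z_n^*z_n\le x_1+x_2$, then for every $\varepsilon>0$ there are finite families $(c_n^{(i)})_n$, $i=1,2$, with $\sum_n c_n^{(i)*}c_n^{(i)}\le x_i$ and $\bigl(\,\sum_n z_nz_n^*-\varepsilon\,\bigr)_+\le\sum_n c_n^{(1)}c_n^{(1)*}+\sum_n c_n^{(2)}c_n^{(2)*}$ — concretely one takes $c_n^{(i)}=z_n(x_1+x_2+\varepsilon)^{-1/2}x_i^{1/2}$ and checks by functional calculus that the defect has norm at most $\varepsilon$; and (c) a perturbation lemma: if $a\preccurlyeq b$ and $\|b_m-b\|\to 0$, then for each $\varepsilon>0$ one has $(a-\varepsilon)_+\preccurlyeq b_m$ for all large $m$.

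Next I would verify that $\widetilde\phi$ is a weight. Homogeneity is immediate from rescaling majorizing families, and monotonicity from the implication $x\le x'$, $y\preccurlyeq x\Rightarrow y\preccurlyeq x'$. For additivity, the inequality $\widetilde\phi(x_1+x_2)\ge\widetilde\phi(x_1)+\widetilde\phi(x_2)$ is the easy half: given $y_i\in B_+$ with $y_i\preccurlyeq x_i$, concatenation gives $y_1+y_2\in B_+$ with $y_1+y_2\preccurlyeq x_1+x_2$, and $\phi(y_1+y_2)=\phi(y_1)+\phi(y_2)$, so pass to the supremum. For the reverse inequality, take $y\in B_+$ with $y\preccurlyeq x_1+x_2$, so $y=\sum_n z_nz_n^*$ with $\sum_n z_n^*z_n\le x_1+x_2$, and apply (b); the elements $P_i:=\sum_n c_n^{(i)}c_n^{(i)*}$ satisfy $P_i\le y$, hence $P_i\in B_+$ since $B$ is hereditary, and $P_i\approx\sum_n c_n^{(i)*}c_n^{(i)}\le x_i$, hence $P_i\preccurlyeq x_i$; monotonicity and additivity of $\phi$ then give $\phi((y-\varepsilon)_+)\le\phi(P_1)+\phi(P_2)\le\widetilde\phi(x_1)+\widetilde\phi(x_2)$, and letting $\varepsilon\to 0$ with the lower semicontinuity of $\phi$ yields $\phi(y)\le\widetilde\phi(x_1)+\widetilde\phi(x_2)$; taking the supremum over $y$ finishes additivity.

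The trace property is then almost formal: for $z\in A$ and $y\in B_+$ with $y\preccurlyeq zz^*$, write $y\approx y_1\le zz^*$; since $zz^*\approx z^*z$ (the single-element family $\{z^*\}$), absorption in (a) gives $y_1\preccurlyeq z^*z$, hence $y\preccurlyeq z^*z$ by transitivity, so $\phi(y)\le\widetilde\phi(z^*z)$. Taking suprema and interchanging $z\leftrightarrow z^*$ gives $\widetilde\phi(zz^*)=\widetilde\phi(z^*z)$ for all $z\in A$, and specializing to $z=ux^{1/2}$ with $u$ a unitary in $\widetilde{A}$ gives $\widetilde\phi(u^*xu)=\widetilde\phi(x)$. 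For lower semicontinuity of $\widetilde\phi$, suppose $x_m\to x$ in $A_+$ with $\widetilde\phi(x_m)\le\alpha$ for all $m$; for any $y\in B_+$ with $y\preccurlyeq x$, the perturbation lemma (c) gives $(y-\varepsilon)_+\preccurlyeq x_m$ for $m$ large, so $\phi((y-\varepsilon)_+)\le\widetilde\phi(x_m)\le\alpha$; letting $\varepsilon\to 0$ (lower semicontinuity of $\phi$) gives $\phi(y)\le\alpha$, and the supremum over $y$ gives $\widetilde\phi(x)\le\alpha$. Finally, $\widetilde\phi|_{B_+}$ dominates $\phi$ because $x\preccurlyeq x$ puts $\phi(x)$ into the defining supremum, it is a trace on $B$ because it is the restriction of the trace $\widetilde\phi$ to the hereditary subalgebra $B$, and it is the smallest such: if $\psi$ is any trace on $B$ with $\psi\ge\phi$ on $B_+$, then for $y\preccurlyeq x$ in $B_+$ one writes $y\approx y_1\le x$ and uses that $\psi$ is a trace to get $\psi(y)=\psi(y_1)\le\psi(x)$, whence $\phi(y)\le\psi(y)\le\psi(x)$, and the supremum over $y$ gives $\widetilde\phi|_{B_+}\le\psi$.

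The only genuine obstacle is assembling the $\preccurlyeq$-lemmas of the first step — the Riesz-type decomposition (b) and the norm-perturbation lemma (c) — since this is exactly where the functional-calculus and $\varepsilon$-cut-down arguments live in a general, possibly non-simple and non-$\sigma$-unital, $\text{C}^*$-algebra; once these are granted, the fact that $\widetilde\phi$ is a lower semicontinuous trace with the stated minimality property is routine.
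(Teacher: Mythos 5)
First, note that the paper offers no proof of this statement at all: it is quoted verbatim from 5.2.7 of \cite{Pedersen}, so your reconstruction can only be compared with the standard argument there. Your overall route is indeed that argument (elementary properties of $\preccurlyeq$, a Riesz-type decomposition for additivity, $zz^*\approx z^*z$ for the trace property, $\varepsilon$-cut-downs plus lower semicontinuity of $\phi$ for lower semicontinuity of $\widetilde\phi$), and most steps check out. But the additivity step contains a concrete error. With $c_n^{(i)}=z_n(x_1+x_2+\varepsilon)^{-1/2}x_i^{1/2}$ the defect is $D=\sum_n z_n\,\tfrac{\varepsilon}{x_1+x_2+\varepsilon}\,z_n^*=\sum_n w_nw_n^*$ with $\sum_n w_n^*w_n\le\varepsilon$; this only yields $\|D\|\le N\varepsilon$ with $N$ the size of the family, not $\le\varepsilon$ (already $z_1=e_{11}$, $z_2=e_{12}$ in $M_2$ shows $\|\sum w_nw_n^*\|$ can exceed $\|\sum w_n^*w_n\|$ by a factor of $N$). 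More seriously, even granting $\|y-(P_1+P_2)\|\le\varepsilon$, the inequality $(y-\varepsilon)_+\le P_1+P_2$ that your appeal to ``monotonicity of $\phi$'' requires does not follow: $t\mapsto t_+$ is not operator monotone, and $a\le b$ with $b\ge 0$ does not imply $a_+\le b$ (an explicit $2\times 2$ counterexample exists). You cannot fall back on $(y-\varepsilon)_+\preccurlyeq P_1+P_2$ either, because $\phi$ is only a weight on $B$, not yet a trace.

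Both defects are repairable without changing your architecture. The cheap fix: you have the exact identity $y=(P_1+P_2)+D$ with $P_1+P_2\le y$ (hence $P_1+P_2\in B_+$ by heredity) and $\|D\|\le N\varepsilon\to 0$, so $P_1^{(\varepsilon)}+P_2^{(\varepsilon)}\to y$ in norm and lower semicontinuity of $\phi$ gives directly $\phi(y)\le\liminf\bigl(\phi(P_1)+\phi(P_2)\bigr)\le\widetilde\phi(x_1)+\widetilde\phi(x_2)$, with no cut-down of $y$ needed. The cleaner fix is to quote the decomposition in the exact form Pedersen proves (Lemma 5.2.6): $y\preccurlyeq x_1+x_2$ implies $y=y_1+y_2$ with $y_i\preccurlyeq x_i$, which removes the defect altogether. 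Two smaller points: the facts you label (a)--(c) live in \S 5.2 of \cite{Pedersen}, not \S 1.4 (1.4.5 is only the factorization tool used to prove them); and in the minimality argument the identity $\psi(y)=\psi(y_1)$ for $y\approx y_1$ in $B_+$ needs the observation that the implementing elements satisfy $z_n^*z_n\le y\in B_+$ and $z_nz_n^*\le y_1\in B_+$, hence $z_n\in B$ by the correspondence between hereditary subalgebras and closed left ideals, after which Proposition \ref{comm} applies; accordingly the minimality is naturally asserted among lower semicontinuous traces on $B$.
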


The following two definitions are some usual notations.
\begin{definition}
Let $A$ be a $\text{C}^*$-algebra. 
Let $\mathcal{P}(A)$ be the set of all projections in $A$.
Let $\text{K}_0(A)$ be the $\text{K}_0$-group of $A$
and $K_0(A)^+\subseteq K_0(A)$ be the semigroup of $K_0(A)$
generated by $[p]\in \text{K}_0(A)$, where $p\in \mathcal{P}_{\infty}(A)$.
Define  $$\Sigma A =\{[p]\in K_0(A)^+: ~p\mbox{ is a projection in }A\}.$$
\end{definition}
Then $( \text{K}_0(A),  \text{K}_0(A)^+,\Sigma A)$ is a scaled ordered group.

\begin{definition}
Let $X$ be any convex set. 

\begin{enumerate}
\item Let $\textrm{Aff}(X)^+$ be the collection of all affine maps from $X$ to $[0,\infty]$.
\item Let $\textrm{Aff}_b(X)$ be the collection of all affine maps from $X$ to $\mathbb{R}$.
\item Let $\textrm{Aff}_b(X)^+$ be the subset of $\textrm{Aff}_b(X)$ consisting of all nonnegative affine functions.
\end{enumerate}

Any affine map $\xi:X\rightarrow Y$ induces a linear map $\xi^*:\textrm{Aff}(Y)\rightarrow \textrm{Aff}(X)$ by 
$$\xi^*(f)(\tau)=f(\xi(\tau)),$$
for all $f\in \textrm{Aff}(Y)$ and $\tau\in X.$

\end{definition}

\section{Two invariants and their relevant categories }
In this section, we construct two categories $\mathcal{E}$ and $\mathcal{S}$ in which 
the extended Elliott invariant and Stevens invariant sit, respectively. 
We show that there is a canonical non-trivial map from the object set of   $\mathcal{E}$ to the object set of  $\mathcal{S}$, which induces a map from 
the extended Elliott invariant of a stably finite $\text{C}^*$-algebra to its Stevens invariant.
We also construct a canonical map from the object set of   $\mathcal{S}$ to the object set of  $\mathcal{E}$, which induces a map from 
the Stevens  invariant  to the extended Elliott invariant for a  stably finite $\text{C}^*$-algebra with the ideal property.

Let $\mathcal{E}$ denote the category whose objects are four-tuples 
$$((G_0,G_0^+),\Sigma G, G_1, X),$$
where $(G_0,G_0^+)$ is a partially ordered abelian group; $G_1$ is a countable abelian group; $\Sigma G$ is a subset of $G^+_0$;
$X$ is a cone  
 closed under addition and positive scalar multiplication such that there exists a positive linear map
$s^G$ from $G_0^+$ to  $\textrm{Aff}(X)^+$. 
And $X$ is also a complete lattice cone when endowed with the order structure induced by 
its addition operation (i.e., $\tau_1\leq \tau_2$ if there exists $\tau_3\in X$ such that $\tau_1+\tau_3=\tau_2$).

A morphism $$\Theta:((G_0,G_0^+),\Sigma G, G_1, X)\rightarrow ((H_0,H_0^+),\Sigma H, H_1, Y)$$
in $\mathcal{E}$ is a three-tuple $$\Theta=(\theta_0,\theta_1,\zeta),$$
where $\theta_0:(G_0,G_0^+,\Sigma G)\rightarrow (H_0,H_0^+,\Sigma H)$ is an order-preserving homomorphism satisfying
$\theta_0(\Sigma G)\subseteq \Sigma H$; $\theta_1:G_1\rightarrow H_1 $ is any homomorphism and $\zeta:Y\rightarrow X$ 
is a continuous affine map that makes the diagram below commutative:
$$\xymatrix@C=1cm{G_0^+\ar[d]^{s^G}\ar[r]^{\theta_0}&H_0^+\ar[d]^{s^H}\\
\textrm{Aff}(X)\ar[r]^{\zeta^*}&\text{Aff}(Y)}$$

\begin{definition}
For a  $\text{C}^*$-algebra $A$, the extended Elliott invariant of $A$ is
$$((\text{K}_0(A),\text{K}_0(A)^+),\Sigma A, \text{K}_1(A),\text{T}(A)),$$
with the natural pairing between $\text{K}_0(A)^+$ and $\text{T}(A)$, i.e., let $$s^A:\text{K}_0(A)^+\rightarrow \text{Aff}(\text{T}(A))$$
be defined by evaluating a given trace at a $\text{K}_0$-class.
\end{definition}
Obviously, if $A$ is a stably finite $\text{C}^*$-algebra, then the extended Elliott invariant of $A$ is an object in the category $\mathcal{E}$
(see \cite{ERS} for more details).
Given a class  of  stably finite $\text{C}^*$-algebras, say $\mathcal{A}$, let $\mathcal{E}_{\mathcal{A}}$ denote 
the subcategory of $\mathcal{E}$ whose objects can be realised as the extended Elliott invariant of a member of $\mathcal{A}.$

\begin{definition}
A preordered cone $(F,\leq)$ is said to have the Riesz property if for $f,g,h\in F$
with $f\leq g+h$ there are always $\widehat{g},\widehat{h}\in F$ with $\widehat{g}\leq g$, $\widehat{h}\leq h$ such that $f=\widehat{g}+\widehat{h}.$
\end{definition}

\begin{proposition} (see Theorem 2.6.8 in \cite{FL})\label{wolf}
Let $(F,\leq)$ be a lattice cone. Then its positive dual cone $F^*_+$ has the Riesz property and it is a complete lattice cone. 
\end{proposition}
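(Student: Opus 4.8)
The plan is to adapt the Riesz--Kantorovich theorem, which in the classical setting produces both the lattice structure and the Riesz decomposition property on the order dual of a vector lattice, to the setting of (possibly non-cancellative) lattice cones with $[0,\infty]$-valued functionals. Throughout I regard an element of $F^{*}_{+}$ as an order-preserving, additive, positively homogeneous map $F\to[0,\infty]$ (together with the semicontinuity required in \cite{FL}), ordered by $\phi\le\psi$ iff $\psi=\phi+\chi$ for some $\chi\in F^{*}_{+}$; for $s,t\in[0,\infty]$ I write $t\ominus s:=\inf\{c\in[0,\infty]:t\le s+c\}$. The argument has three stages: (i) prove the Riesz decomposition property by an explicit construction, which en route yields finite meets and joins; (ii) read off from (i) that $F^{*}_{+}$ is a lattice cone; (iii) upgrade to completeness by giving a formula for arbitrary suprema.

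For stage (i), given $f,g,h\in F^{*}_{+}$ with $f\le g+h$, I would set, for $x\in F$,
\[
\widehat g(x):=\inf\{\,f(a)+g(b):a,b\in F,\ a+b=x\,\},\qquad
\widehat h(x):=\sup\{\,f(b)\ominus g(b):b\in F,\ b\le x\,\}.
\]
Choosing $a=0$ shows $\widehat g\le g$, and choosing $b=x$ together with $f(b)\le g(b)+h(b)$ shows $\widehat h\le h$; both maps are plainly monotone and positively homogeneous. The substantive point is that $\widehat g$ and $\widehat h$ are \emph{additive}, which I would derive from the Riesz refinement property available in the lattice cone $F$ (two decompositions $x_{1}+x_{2}=y_{1}+y_{2}$ admit a common refinement built from the meets $x_{i}\wedge y_{j}$), combined with the elementary arithmetic of $\ominus$ on $[0,\infty]$. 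Granting additivity, one checks $\widehat g+\widehat h=f$ by feeding near-optimal decompositions of a given $x$ into the two defining formulas and using $f\le g+h$ for the nontrivial inequality; thus $(\widehat g,\widehat h)$ is the desired decomposition, and in particular $\widehat g=f\wedge g$.

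For stage (ii), running the same construction with $h$ replaced by a second generic functional shows that $f\vee g$ exists as well, with $(f\vee g)(x)=\sup\{f(x_{1})+g(x_{2}):x_{1}+x_{2}=x\}$; so $F^{*}_{+}$ is a lattice cone satisfying the Riesz property. For stage (iii), I would define for an arbitrary family $\{\phi_{i}\}_{i\in I}\subseteq F^{*}_{+}$
\[
\Bigl(\textstyle\bigvee_{i}\phi_{i}\Bigr)(x):=\sup\Bigl\{\textstyle\sum_{j\in J}\phi_{j}(x_{j}):J\subseteq I\ \text{finite},\ x_{j}\in F,\ \textstyle\sum_{j\in J}x_{j}=x\Bigr\},
\]
check (again via refinement in $F$) that this lies in $F^{*}_{+}$ and is the least upper bound of the family, and then obtain arbitrary infima for free, since $0$ bounds every subset below, so $\bigwedge_{i}\phi_{i}=\bigvee\{\psi\in F^{*}_{+}:\psi\le\phi_{i}\ \forall i\}$. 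Hence $F^{*}_{+}$ is a complete lattice cone.

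The main obstacle is exactly the gap between this and the textbook proof: classically one simply puts $\widehat h=f-\widehat g$ with everything finite, whereas here subtraction is unavailable, which forces the truncated-difference formula, and the additivity of $\widehat h$ (and the compatibility of $\vee$, $\wedge$, and arbitrary $\bigvee$ with whatever lower semicontinuity defines $F^{*}_{+}$) becomes the real content, resting on the refinement and continuity properties of the operations on $F$. These verifications are assembled in \cite{FL}, to which I defer for the details.
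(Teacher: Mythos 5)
First, a point of comparison: the paper contains no proof of this proposition at all --- it is imported verbatim from Theorem 2.6.8 of \cite{FL} --- so there is no in-paper argument to measure your sketch against; I can only assess it on its own terms.

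Your Riesz--Kantorovich outline identifies the classically correct formulas ($\widehat{g}=f\wedge g$ as an infimal convolution, $\widehat{h}$ as a truncated $(f-g)^{+}$, and the standard expression for suprema), but the two steps you treat as routine are exactly where the absence of cancellation bites, and the devices you invoke do not resolve them. (a) You derive additivity of $\widehat{g}$ and $\widehat{h}$ from a refinement property of $F$ ``built from the meets $x_{i}\wedge y_{j}$.'' That construction ($z_{11}=x_{1}\wedge y_{1}$, $z_{12}=x_{1}-z_{11}$, and so on) is a lattice-ordered \emph{group} argument: it requires subtraction in $F$, i.e.\ precisely the cancellation you correctly flag as unavailable on the dual side. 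In a non-cancellative lattice cone, refinement does not follow from the existence of meets alone, and without it the additivity of your candidates, the identity $\widehat{g}+\widehat{h}=f$, and the well-definedness of your formula for arbitrary suprema all collapse. (b) You order $F^{*}_{+}$ algebraically ($\phi\le\psi$ iff $\psi=\phi+\chi$), but you verify only the pointwise inequalities $\widehat{g}(x)\le g(x)$ and $\widehat{h}(x)\le h(x)$; to conclude $\widehat{g}\le g$ in the order you chose you must actually exhibit the complement $\chi$ with $\widehat{g}+\chi=g$, which is again a subtraction problem and is nowhere addressed. Finally, the closing deferral of ``these verifications'' to \cite{FL} makes the argument circular as a proof of the cited theorem: either supply these steps (or replace them with the cone-theoretic machinery \cite{FL} develops for this purpose), or simply cite the result as the paper does, in which case the sketch adds nothing.
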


Let $\mathcal{S}$ denote the category whose objects are four-tuples 
$$((G_0,G_0^+),\Sigma G, G_1, \{\Delta_p^G\}_{p\in G_0^+}),$$
where $(G_0,G_0^+)$ is a partially ordered abelian group; $G_1$ is a countable abelian group; $\Sigma G$ is a subset of $G^+_0$;
for each $p\in G_0^+$, there is a  positive cone  $\Delta_p^G$ with a base of a simplex, and a positive linear map 
$$s_p^G:G_0^p\longrightarrow \textrm{Aff}_b(\Delta_p^G),$$ 
where $G_0^p$ is the subgroup of $G_0$ generated by the set 
$$\{e\in G_0:0\leq e\leq np\mbox{ for some } n\in\mathbb{Z}\}.$$
For any $p'\in G_0^+$ with $p'\leq p$,
 there is an affine map $\lambda_{p,p'}^G:\Delta_p^G\rightarrow \Delta_{p'}^G$ 
 satisfying the following conditions:

(1) If $p''\leq p'\leq p$, then 
 $\lambda_{p,p''}^G=\lambda^G_{p',p''}\circ\lambda_{p,p'}^G.$

(2) $s^G_p(e)(\tau)=s^G_{p'}(e)(\lambda_{p,p'}(\tau))$ for all $\tau\in\Delta_p^G$, $e\in G_0^{p'}.$

(3) The map $(\lambda_{p,p'}^G)^*:\text{Aff}_b(\Delta_{p'}^G)\rightarrow \text{Aff}_b(\Delta_{p}^G)$ induced by $\lambda_{p,p'}^G$ is hereditary, i.e.,
 if $f\in \text{Aff}_b(\Delta_{p'}^G)$ and $g\in \text{Aff}_b(\Delta_{p}^G)$ satisfying $(\lambda_{p,p'}^G)^*(f)\geq g$, then
there exists $h\in \text{Aff}_b(\Delta_{p'}^G)$ such that $$g=(\lambda_{p,p'}^G)^*(h).$$

(4) For each $f\in \text{Aff}_b(\Delta_{p+q})$, there exist $f_1\in \text{Aff}_b(\Delta_p), f_2\in\text{Aff}_b(\Delta_q)$ such that 
$$f=\lambda_{p+q,p}^*(f_1)+\lambda_{p+q,q}^*(f_2)$$
where $p,q \in G_0^+$. 

\begin{remark}
By the definition of $\Delta_p^G$ in the Stevens invariant, we know that $\Delta_p^G$ 
is a lattice cone for each $p$.
\end{remark}

A morphism $$\Theta:((G_0,G_0^+),\Sigma G, G_1, \{\Delta_p^G\}_{p\in G_0^+})\rightarrow ((H_0,H_0^+),\Sigma H, H_1,  \{\Delta_e^H\}_{e\in H_0^+})$$
in $\mathcal{S}$ is a three-tuple $$\Theta=(\theta_0,\theta_1,\{\xi^p\}_{p\in G_0^+}),$$
where $\theta_0:(G_0,G_0^+,\Sigma G)\rightarrow (H_0,H_0^+,\Sigma H)$ is an order-preserving homomorphism satisfying
$\theta_0(\Sigma G)\subseteq \Sigma H;$ $\theta_1:G_1\rightarrow H_1 $ is any homomorphism;
for each $p\in G_0^+$, there is a continuous affine map
$\xi^p:\Delta_{\theta_0(p)}^H\rightarrow \Delta_{p}^G$ 
 that makes the diagram below commutative:
$$\xymatrix@C=1cm{\Delta_{q}^H\ar[d]^{\lambda_{q,q'}^H}\ar[r]^{\xi^p}&\Delta_{p}^G\ar[d]^{\lambda_{p,p'}^G}\\
\Delta_{q'}^H\ar[r]^{\xi^{p'}}&\Delta_{p'}^G}$$
where $q=\theta_0(p)$, $q'=\theta_0(p')$, $p,p'\in G_0^+$ satisfying $p'\leq p.$

\begin{definition}
For a 	$\text{C}^*$-algebra $A$, the Stevens invariant is 
$$((\text{K}_0(A),\text{K}_0(A)^+),\Sigma A, \text{K}_1(A),\{\text{T}_{\text{F}}(\overline{pAp})\}_{p\in \text{K}_0(A)^+}),$$
with a natural pairing $s^A_p$ between $\text{K}_0(A)^p$ and $\text{T}_{\text{F}}(\overline{pAp})$ given by evaluating a given trace at a $\text{K}_0$-class and 
$\lambda_{p,q}^A:\text{T}_{\text{F}}(\overline{pAp}) \rightarrow \text{T}_{\text{F}}(\overline{qAq})$  defined by restriction. That is,
$$\lambda_{p,q}^A(\tau)=\tau|_{\overline{qAq}}.$$
\end{definition}

It is easy to see that the Stevens invariant of $A$ is an object in the category $\mathcal{S}$ when $A$ is stably finite.
Given a class $\mathcal{A}$ of stably finite $\text{C}^*$-algebras, let $\mathcal{S}_{\mathcal{A}}$ denote 
the subcategory of $\mathcal{S}$ whose objects can be realised as the Stevens invariant of a member of $\mathcal{A}.$


\begin{lemma}(see Lemma 10.4 in \cite{Ph})\label{l1} If $P$ is a lattice and $P_1$ is a hereditary subcone of $P$, then
$P_1$ is a lattice.

\end{lemma}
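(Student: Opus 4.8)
The plan is to recall the definition of a hereditary subcone and then reduce the statement to a direct verification that the lattice operations of $P$, suitably modified, remain internal to $P_1$. Recall that $P_1 \subseteq P$ is hereditary means that whenever $y \in P$ and $x \in P_1$ satisfy $y \leq x$, then $y \in P_1$; and $P_1$ is a subcone means it is closed under addition and positive scalar multiplication and inherits the order. To show $P_1$ is a lattice we must produce, for any $a, b \in P_1$, a meet $a \wedge_{P_1} b$ and a join $a \vee_{P_1} b$ computed inside $P_1$.

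For the meet, the natural candidate is simply $a \wedge_P b$, the meet taken in the ambient lattice $P$. Since $a \wedge_P b \leq a$ and $a \in P_1$, heredity immediately gives $a \wedge_P b \in P_1$; and because the order on $P_1$ is the restriction of the order on $P$, this element is still the greatest lower bound of $\{a,b\}$ within $P_1$ (any lower bound in $P_1$ is in particular a lower bound in $P$). So meets are unproblematic. The join is the subtle point: $a \vee_P b$ need not lie in $P_1$, so we cannot use it directly. The idea is to use the cone structure: set $c = a \vee_P b$ in $P$, and consider the element $(a + b) - $ (something), or more robustly, observe that since $P$ is a lattice cone in the sense used here one has the modularity-type identity $a + b = (a \vee_P b) + (a \wedge_P b)$; hence $a \vee_P b = a + b - a \wedge_P b$ makes sense in the group of differences, but this does not by itself land in $P_1$ either. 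The cleaner route is to define the join in $P_1$ as $a \vee_{P_1} b := \bigwedge\{ u \in P_1 : u \geq a, u \geq b\}$ and show this infimum exists in $P_1$ and is an upper bound.

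Concretely, I would argue as follows. First note the set $U = \{u \in P_1 : u \geq a,\ u \geq b\}$ is nonempty, since $a + b \in P_1$ and $a + b \geq a$, $a + b \geq b$ (using that $P_1$ is a subcone and the order comes from addition). Now take $v = \bigwedge_P U$, the infimum of $U$ computed in the complete lattice $P$ — here I am using that in this paper the ambient cones are complete lattice cones, so arbitrary infima exist; if only finite meets are assumed, one instead observes $U$ is downward directed modulo the finitely many generators and takes a finite meet. Since each $u \in U$ satisfies $u \leq a+b \in P_1$, heredity gives — wait, that gives membership of the $u$'s, not of $v$; rather, pick any fixed $u_0 \in U$, then $v = \bigwedge_P U \leq u_0 \in P_1$, so by heredity $v \in P_1$. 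It remains to check $v \geq a$ and $v \geq b$: every element of $U$ is $\geq a$, so $a$ is a lower bound of $U$ in $P$, whence $a \leq \bigwedge_P U = v$, and likewise $b \leq v$. Thus $v \in U$ and is the least element of $U$, i.e. the join of $a,b$ in $P_1$. The main obstacle — and the step deserving care — is precisely the existence of $v$: one must either invoke completeness of the ambient lattice cone (available in the paper's setting, cf. Proposition~\ref{wolf} and the standing hypotheses on $X$ and $\Delta_p^G$) or verify that the relevant infimum can be realized as a finite meet; once existence is secured, heredity does all the remaining work, and the verification that $v$ is the join is routine order-chasing.
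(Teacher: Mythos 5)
The paper offers no proof of this lemma at all --- it is quoted verbatim from Phelps (Lemma 10.4 of \cite{Ph}) --- so there is nothing internal to compare against; what matters is whether your argument stands on its own. Your treatment of the meet is exactly right: $a\wedge_P b\leq a\in P_1$, heredity puts $a\wedge_P b$ in $P_1$, and since the order on $P_1$ is the restricted order it remains the greatest lower bound there.

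For the join you talk yourself out of the correct one-line argument. You assert that $a\vee_P b$ ``need not lie in $P_1$,'' but in fact it must: in a cone with the order induced by addition (which is the setting here and in \cite{Ph}) one has $a\leq a+b$ and $b\leq a+b$ simply because $a,b\geq 0$, so $a+b$ is an upper bound of $\{a,b\}$ and hence $a\vee_P b\leq a+b$. Since $P_1$ is a subcone, $a+b\in P_1$, and heredity then gives $a\vee_P b\in P_1$ directly; as the order on $P_1$ is inherited, this element is still the least upper bound in $P_1$. Your substitute construction --- $v=\bigwedge_P U$ over the set $U$ of all upper bounds of $a,b$ lying in $P_1$ --- requires arbitrary infima in $P$, i.e.\ completeness, which is not among the hypotheses of the lemma ($P$ is only assumed to be a lattice); and the fallback you gesture at (``realize the infimum as a finite meet'') is precisely the observation you are missing, namely that $a\vee_P b$ itself already belongs to $U$ and is its least element. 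So as written there is a genuine gap in the join half, but it is closed immediately by the single inequality $a\vee_P b\leq a+b$, after which heredity does all the work, exactly as you intended.
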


\begin{lemma}\label{l2}
Let $((G_0,G_0^+),\Sigma G, G_1, X)$ be an object in $\mathcal{E}$.
For any $p\in G_0^+$, 
let $$\Delta'_p=\{\tau\in X:0\leq s^G(p)(\tau)<\infty\},$$
$$\Delta_p^G=\Delta'_p/\sim,$$
where $\tau_1\sim\tau_2$ if and only if $f(\tau_1)=f(\tau_2) $ for all $f\in \text{Aff}(X)$ satisfying 
$f$ is bounded on $\Delta_p'.$
Then $\Delta_p^G$ is a positive cone with a base of a simplex.
\end{lemma}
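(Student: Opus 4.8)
The plan is to realize $\Delta_p^G$ as a quotient of a hereditary subcone of $X$ and then invoke the lattice machinery already set up in the excerpt. First I would observe that $\Delta'_p = \{\tau \in X : s^G(p)(\tau) < \infty\}$ is a subcone of $X$: since $s^G(p)$ is an affine function with values in $[0,\infty]$, the set on which it is finite is closed under addition and positive scalar multiplication (finiteness is preserved because $s^G(p)(\tau_1+\tau_2) = s^G(p)(\tau_1) + s^G(p)(\tau_2)$ and $s^G(p)(\alpha\tau) = \alpha\, s^G(p)(\tau)$). Moreover $\Delta'_p$ is \emph{hereditary} in $X$: if $\tau_1 \le \tau$ in the order of $X$ (i.e. $\tau_1 + \tau_3 = \tau$ for some $\tau_3 \in X$) and $\tau \in \Delta'_p$, then $s^G(p)(\tau_1) \le s^G(p)(\tau) < \infty$, so $\tau_1 \in \Delta'_p$. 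Since $X$ is a complete lattice cone by the definition of $\mathcal{E}$, Lemma~\ref{l1} applies and $\Delta'_p$ is a lattice (and in fact one should check it is a \emph{complete} lattice cone, which follows because suprema and infima of subsets of $\Delta'_p$ computed in $X$ again satisfy the relevant order bounds that keep $s^G(p)$ finite — here one uses that $\Delta'_p$ is hereditary and the monotonicity of $s^G(p)$).

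Next I would analyze the equivalence relation $\sim$ and the passage to the quotient. Let $\mathcal{F}_p \subseteq \textrm{Aff}(X)$ be the set of affine functions that are bounded on $\Delta'_p$; note this is itself a cone of functions, stable under addition and positive scaling, and it separates points of $\Delta'_p$ up to $\sim$ essentially by definition. The relation $\sim$ is a congruence for the cone operations: if $\tau_1 \sim \tau_1'$ and $\tau_2 \sim \tau_2'$ then for every $f \in \mathcal{F}_p$ we have $f(\tau_1 + \tau_2) = f(\tau_1) + f(\tau_2) = f(\tau_1') + f(\tau_2') = f(\tau_1' + \tau_2')$ using affinity, and similarly for scalar multiplication; hence $\Delta_p^G = \Delta'_p/\sim$ inherits a well-defined cone structure. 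The quotient map $q : \Delta'_p \to \Delta_p^G$ is then a surjective cone homomorphism, and I would want to see $\Delta_p^G$ as (isomorphic to) the image of $\Delta'_p$ under the evaluation pairing into $\textrm{Aff}_b(\mathcal{F}_p^{\,\cdot})$ or, more usefully, identify its base.

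The heart of the lemma is the claim that $\Delta_p^G$ \emph{has a base which is a simplex}. Here I would argue as follows: pick a suitable ``norming'' or ``trace-normalizing'' functional — the natural choice is $s^G(p)$ itself, which on $\Delta'_p$ takes finite values, so the slice $B_p = \{\tau \in \Delta'_p : s^G(p)(\tau) = 1\}$ (when nonempty; the zero part is handled separately) is a base for the cone $\Delta'_p$, and passes to a base $\overline{B_p}$ of $\Delta_p^G$ after quotienting by $\sim$. One then needs that $\overline{B_p}$ is a Choquet simplex. The strategy is to use the Riesz decomposition / lattice structure: $\Delta'_p$ is a lattice cone (from the previous paragraph), and a cone is a lattice cone precisely when its base is a simplex (this is the standard correspondence between lattice-ordered cones and Choquet simplices, and it is implicitly the content behind Remark after the definition of $\mathcal{S}$ and Proposition~\ref{wolf}); the quotient by $\sim$ is designed exactly so that it does not destroy this — indeed $\sim$ collapses precisely the ``directions invisible to bounded affine functions,'' which are the directions along which the lattice structure is degenerate, so the quotient is again a lattice cone, now with a genuine (Hausdorff, compact, after completion) simplex as base. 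I expect \textbf{this last point — verifying that $\Delta_p^G$, and not merely $\Delta'_p$, has a simplex base, i.e. that the quotient by $\sim$ is compatible with the lattice/simplex structure and yields the correct (separated) base — to be the main obstacle}, since it requires being careful about completeness, the exact role of $\mathcal{F}_p$ versus all of $\textrm{Aff}(X)$, and the precise sense in which ``base of a simplex'' is meant in the definition of $\mathcal{S}$; the subcone and heredity verifications in the first two paragraphs are routine by comparison.
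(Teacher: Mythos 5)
Your proposal follows essentially the same route as the paper: show that $\Delta'_p$ is a hereditary subcone of the lattice cone $X$, apply Lemma~\ref{l1} to conclude it is a lattice, take the slice $\{\tau : s^G(p)(\tau)=1\}$ as a convex base, invoke the lattice-cone/Choquet-simplex correspondence (Phelps), and pass to the quotient by $\sim$. The step you single out as the main obstacle --- verifying that the quotient $T_p'/\sim$ is still a simplex base of $\Delta_p^G$ --- is exactly the step the paper also dispatches in one sentence without further argument, so your assessment of where the real work lies is accurate.
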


\begin{proof}
For $\tau\in\Delta'_p$, let $[\tau]$ represent the equivalent class of $\tau$ in $\Delta_p^G.$ 
It's easy to see that  for all $\alpha\in\mathbb{R}_+$ and $\tau\in \Delta'_p$,  $$\alpha[\tau]=[\alpha\tau]\in \Delta_p^G.$$
If $[\tau_1],$ $[\tau_2]\in \Delta_p^G$ and $t\in[0,1],$ then $$t[\tau_1]+(1-t)[\tau_2]=[t\tau_1+(1-t)\tau_2]\in \Delta_p^G.$$  
Therefore, $\Delta_p^G$ is a positive cone. 
Similarly, $\Delta'_p$ is a positive subcone of $X$. 

Claim: $\Delta_p'$ is a hereditary subcone of $X$. 
If $\phi_1\in\Delta_p'$, $\phi_2\in X$ satisfying $\phi_2\leq\phi_1$, then by the definition of the order on $X$
there exists $\phi_3\in X$ such that $$\phi_1=\phi_2+\phi_3.$$
Thus,
\begin{align*}
s^G(p)(\frac{1}{2}\phi_1)&=s^G(p)(\frac{1}{2}\phi_2+\frac{1}{2}\phi_3)\\
&=\frac{1}{2}s^G(p)(\phi_2)+\frac{1}{2}s^G(p)(\phi_3).\\
\end{align*}
Since $s^G(p)(\phi_1)<\infty$, we have $s^G(p)(\phi_2)<\infty$. 
Thus, $\phi_2\in \Delta_p'$ and the claim is true.
By Lemma \ref{l1}, $\Delta_p'$ is a lattice.

Define $\|\tau\|=s^G(p)(\tau)$ for all $\tau\in \Delta_p'.$ 
It is easy to see that $\|\cdot \|$ is a norm on $\Delta_p'$.
Therefore, $\Delta_p'$ can be embedded into a norm space. Let
$$T_p'=\{\tau\in\Delta_p':s^G(p)(\tau)=1\}.$$
Then $T_p'$ is a convex base of the cone $\Delta_p'$ with $\Delta_p'$ a lattice. 
Therefore, $T_p'$ is a simplex (see the second paragraph of page 52 in \cite{Ph}).

Let $T_p^G=T_p'/\sim$, which is a simplex base of $\Delta_p^G.$

\end{proof}

\begin{theorem}\label{DefG}
There is a natural nontrivial transformation $\mathcal{G}$ that maps the objects in $\mathcal{E}$ to the objects in $\mathcal{S}$.
\end{theorem}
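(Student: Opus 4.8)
The plan is to define $\mathcal{G}$ on objects by the recipe already prepared in Lemma~\ref{l2}: given $((G_0,G_0^+),\Sigma G, G_1, X)$ in $\mathcal{E}$, we keep the ordered group $(G_0,G_0^+,\Sigma G)$ and the group $G_1$ untouched, and for each $p\in G_0^+$ we set $\Delta_p^G$ to be the quotient cone $\Delta_p'/\!\sim$ constructed there, which Lemma~\ref{l2} already shows is a positive cone with a simplex base. So the bulk of the work is to manufacture the remaining data of an object of $\mathcal{S}$ — the maps $s_p^G$, the connecting maps $\lambda_{p,p'}^G$ — and to verify conditions (1)--(4). For $s_p^G$, I would take an element $e\in G_0^p$, write it as a difference of elements dominated by multiples of $p$, and note that $s^G(e)$ restricted to $\Delta_p'$ is finite-valued (because $0\le e\le np$ forces $s^G(e)\le n\,s^G(p)$, which is finite on $\Delta_p'$ by definition), hence descends to a bounded affine function on the quotient $\Delta_p^G$; extending additively over differences gives the positive linear map $s_p^G:G_0^p\to\mathrm{Aff}_b(\Delta_p^G)$. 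For $\lambda_{p,p'}^G$ with $p'\le p$: since $p'\le p$ in $G_0^+$, we have $s^G(p')\le s^G(p)$ pointwise on $X$, so $\Delta_p'\subseteq\Delta_{p'}'$; thus the identity on $X$ restricts to an inclusion $\Delta_p'\hookrightarrow\Delta_{p'}'$, and one checks it respects the equivalence relations (a bounded affine function on $\Delta_{p'}'$ is a fortiori bounded on the smaller set $\Delta_p'$), giving a well-defined affine map $\lambda_{p,p'}^G:\Delta_p^G\to\Delta_{p'}^G$.

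Once the maps are in place, conditions (1) and (2) are essentially formal: (1) is transitivity of inclusions $\Delta_p'\subseteq\Delta_{p'}'\subseteq\Delta_{p''}'$, and (2) says $s_p^G(e)$ and $s_{p'}^G(e)$ agree under the inclusion, which holds because both are restrictions of the single function $s^G(e)$ on $X$. Conditions (3) and (4) are where the real content lies, and I expect (3) to be the main obstacle. For (3), I would argue that $(\lambda_{p,p'}^G)^*$ identifies $\mathrm{Aff}_b(\Delta_{p'}^G)$ with the functions on $\Delta_{p'}'$ that happen to be bounded, restricted to $\Delta_p'$; the hereditary/order-ideal property then should follow from the fact (Lemma~\ref{l1}) that $\Delta_p'$ is a hereditary subcone of the lattice cone $\Delta_{p'}'$, combined with the lattice structure established in Lemma~\ref{l2}, so that any $g\le(\lambda_{p,p'}^G)^*(f)$ on $\Delta_p^G$ can be pushed back to a bounded affine function on $\Delta_{p'}^G$. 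This likely requires care: one has to go from "dominated by a function that extends" to "extends," which is precisely the kind of statement Proposition~\ref{wolf} (Riesz property and completeness of the dual of a lattice cone) is designed to supply — the plan is to realize $\mathrm{Aff}_b(\Delta_p^G)$ as (a piece of) a dual cone and invoke the Riesz decomposition property there.

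For condition (4), the decomposition $f=\lambda_{p+q,p}^*(f_1)+\lambda_{p+q,q}^*(f_2)$ for $f\in\mathrm{Aff}_b(\Delta_{p+q}^G)$, I would again lean on the Riesz property from Proposition~\ref{wolf}: since $\Delta_{p+q}'=\Delta_p'\cap\Delta_q'$ (as subsets of $X$) sits inside both $\Delta_p'$ and $\Delta_q'$, and $s^G(p+q)=s^G(p)+s^G(q)$, a bounded affine function on $\Delta_{p+q}'$ is bounded by a multiple of $s^G(p)+s^G(q)$; the Riesz property of the relevant lattice-cone dual then splits it as a sum of pieces controlled by $s^G(p)$ and by $s^G(q)$ respectively, which one arranges to be (restrictions of) bounded affine functions on $\Delta_p^G$ and $\Delta_q^G$.

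Finally, to justify calling $\mathcal{G}$ a \emph{transformation} that is \emph{natural} and \emph{nontrivial}: naturality means that a morphism $(\theta_0,\theta_1,\zeta)$ in $\mathcal{E}$ is sent to a morphism $(\theta_0,\theta_1,\{\xi^p\})$ in $\mathcal{S}$, where $\xi^p:\Delta_{\theta_0(p)}^H\to\Delta_p^G$ is induced by the affine map $\zeta:Y\to X$ — one checks that $\zeta$ carries $\Delta_{\theta_0(p)}'\subseteq Y$ into $\Delta_p'\subseteq X$ (because $s^H(\theta_0(p))=\zeta^*(s^G(p))$ by the commuting square defining the morphism, so finiteness is preserved), that it descends to the quotients, and that the required square with the $\lambda$'s commutes because everything is built from restricting the single map $\zeta$. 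Nontriviality is witnessed by the fact that applying $\mathcal{G}$ to the extended Elliott invariant of a stably finite $\text{C}^*$-algebra $A$ recovers (an object isomorphic to) its Stevens invariant, since $\Delta_p'$ for $X=\text{T}(A)$ is exactly the set of lower semicontinuous traces finite on $p$, which restrict to finite traces on $\overline{pAp}$, matching $\text{T}_{\text{F}}(\overline{pAp})$; this last identification I would state here and defer the detailed compatibility to the section on functoriality.
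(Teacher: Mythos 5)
Your construction of $\mathcal{G}$ on objects coincides with the paper's: $\Delta_p^G=\Delta_p'/\!\sim$ from Lemma~\ref{l2}, $s_p^G$ obtained by restricting $s^G$ to $\Delta_p'$ (finite there because $0\le e\le np$) and extending over differences, and $\lambda_{p,p'}^G$ induced by the inclusion $\Delta_p'\subseteq\Delta_{p'}'$. Your verifications of (1), (2) and (4) are also the paper's: in particular for (4) the paper likewise dominates $f$ by $M(s^G(p)+s^G(q))$ and invokes the Riesz property of Proposition~\ref{wolf} for the lattice cone $\Delta_{p+q}'$. Your remarks on naturality (pushing a morphism $(\theta_0,\theta_1,\zeta)$ to $(\theta_0,\theta_1,\{\xi^p\})$ by restricting $\zeta$) go beyond what this theorem claims --- the paper treats morphisms only in Theorem~\ref{isom} --- and your nontriviality witness is exactly Corollary~\ref{welG}.

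The one substantive divergence is condition (3), which you rightly single out as the crux but leave as a plan rather than a proof. The paper does \emph{not} derive (3) from Proposition~\ref{wolf}. Instead it argues by Choquet theory: take the simplex base $T_{p'}'$ of the larger cone $\Delta_{p'}'$ with extreme point set $E_{p'}'$, define $h$ to equal $g$ on the extreme points lying in $\Delta_p'$ and to equal $f$ on the remaining extreme points, extend affinely, and use the fact that $\Delta_p'$ is a hereditary subcone (so no point of $\Delta_p'$ is a combination involving extreme points outside $\Delta_p'$) to conclude $\lambda_{p,p'}^*(h)=g$ and $h\le f$. Your proposed route --- ``realize $\mathrm{Aff}_b(\Delta_p^G)$ as a piece of a dual cone and invoke the Riesz decomposition property'' --- is not yet an argument: Proposition~\ref{wolf} is a statement about decomposing an element dominated by a sum \emph{within one cone}, whereas (3) asserts that the image of the restriction map $\mathrm{Aff}_b(\Delta_{p'}^G)\to\mathrm{Aff}_b(\Delta_p^G)$ is an order ideal, i.e.\ an \emph{extension} statement across two different cones. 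Passing from ``$g$ is dominated by a function that extends'' to ``$g$ extends'' is precisely the step your sketch does not supply, and $\Delta_p^G$ is not presented as a closed face of $\Delta_{p'}^G$, so the standard face-extension theorems for simplexes do not apply off the shelf. To close the gap you should either reproduce the extreme-point extension above or prove directly that the hereditary subcone $\Delta_p'$ is spanned by a subset of $E_{p'}'$ and that prescribing values there determines an affine extension.
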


\begin{proof}
Let $((G_0,G_0^+),\Sigma G, G_1, X)$ be any object in $\mathcal{E}$.
For any $p\in G_0^+$, define
$\Delta_p'$ and $ \Delta_p^G$ as in the Lemma \ref{l2}.
Then $\Delta_p^G$ is a positive cone with a base of a simplex.

Let $$G_0^{p+}=G_0^p\cap G_0^+$$
and $s_p^G:G_0^{p+}\rightarrow \textrm{Aff}_b(\Delta_p^G)^+$ be defined by 
$$s_p^G(q)([\tau])=s^G(q)(\tau)$$
for $q\in G_0^{p+}$ and $\tau\in \Delta_p'.$
Since $s_p^G(q)(\tau)<\infty$ for all $q\in  G_0^{p+},\tau\in\Delta_p'$, we can extend $s_p^G$ to be a map (still denote it by $s_p^G$) from $G_0^p$ to  $\textrm{Aff}_b(\Delta_p^G)$ by
$$s_p^G(q_1-q_2)([\tau])=s_p^G(q_1)(\tau)-s_p^G(q_2)(\tau)$$
for $q_1,q_2\in G_0^{p+}$ and $\tau\in \Delta_p'.$
It is easy to see that $s_p^G$ is a well-defined linear map. 

For $q\in G_0^+$ with $q\leq p$, let $\lambda_{p,q}^G:\Delta_p^G\rightarrow\Delta_q^G$  be the map induced by the inclusion map from $\Delta_p'$ to $\Delta_q'$.
It is easy to see that $\lambda_{p,q}^G$ is well-defined and moreover, \\
(1) If $p''\leq p'\leq p$, then 
 $\lambda_{p,p''}^G=\lambda^G_{p',p''}\circ\lambda_{p,p'}^G.$\\
(2) $s^G_p(e)([\tau])=s^G_{p'}(e)(\lambda_{p,p'}([\tau]))$ for all $\tau\in\Delta_p'$, $e\in G_0^{p'}.$\\
(3) If $f\in \text{Aff}_b(\Delta_{q}^G)$ and $g\in \text{Aff}_b(\Delta_{p}^G)$ satisfying $(\lambda_{p,q}^G)^*(f)\geq g$,
let $T_q'$ be a simplex base of $\Delta_q'$ and let $E_q'$ be the set of extreme points of $T_q'$.
Define
 $h:E_q'\rightarrow\mathbb{R} $ by the following
$$
h( \tau) = \left\{
  \begin{array}{l l}
     g(\tau) & \quad \text{if $\tau\in \Delta_p'\cap E_q'$}\\
     f(\tau) & \quad \text{otherwise}
   \end{array} \right.
$$ 
Then $h$ can be extended to an affine map from $\Delta_q'$ to $\mathbb{R}$ (still denoted by $h$). 
Since any element in $\Delta_p'$ can not be written as a linear combination of elements in $\Delta_q'\setminus\Delta_p'$, 
$h$ induces  a map from  $\Delta_q^G$ to $\mathbb{R}$ satisfying    
  $$g=\lambda_{p,q}^*(h)\mbox{ and } h\leq f.$$ 
 Thus, the map $(\lambda_{p,p'}^G)^*:\text{Aff}(\Delta_{q}^G)\rightarrow \text{Aff}(\Delta_{p}^G)$ is hereditary.\\
(4) Let $p,q$ be any two elements in $G^+_0$.
 Since $\Delta_{p+q}'$ is a lattice cone, by Proposition \ref{wolf}, $\text{Aff}_b(\Delta_{p+q}')$ 
has the Riesz property. 
For each $f\in \text{Aff}_b(\Delta_{p+q})$, there exists a constant $M$ such that $$f\leq M(s^G(p)+s^G(q)).$$
By the Riesz property, there exist 
 $f_1\in \text{Aff}_b(\Delta_p), f_2\in\text{Aff}_b(\Delta_q)$ such that 
$$f=\lambda_{p+q,p}^*(f_1)+\lambda_{p+q,q}^*(f_2).$$
Therefore,  $((G_0,G_0^+),\Sigma G, G_1, \{\Delta_p^G\}_{p\in G_0^+})$ is an object in $\mathcal{S}.$
Finally let $$\mathcal{G}:\{\mbox{Objects in }\mathcal{E}\}\rightarrow\{\mbox{Objects in }\mathcal{S}\}$$ be the map defined by
sending
$$((G_0,G_0^+),\Sigma G, G_1, X)\mapsto((G_0,G_0^+),\Sigma G, G_1, \{\Delta_p^G\}_{p\in G_0^+}),$$
which completes the proof.

\end{proof}

\begin{corollary}\label{welG}
Let $\mathcal{A}$ be the class of stably finite $\text{C}^*$-algebras. Then
$$\mathcal{G}(\mathcal{E}(A))\cong\mathcal{S}(A)$$
for all $A\in\mathcal{A}$.
\end{corollary}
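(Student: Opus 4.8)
The plan is to verify that the abstract construction $\mathcal{G}$ of Theorem \ref{DefG}, when applied to the extended Elliott invariant $\mathcal{E}(A)$ of a stably finite $\text{C}^*$-algebra $A$, recovers (up to canonical isomorphism in $\mathcal{S}$) the Stevens invariant $\mathcal{S}(A)$. Since the $\text{K}$-theoretic data $((\text{K}_0(A),\text{K}_0(A)^+),\Sigma A,\text{K}_1(A))$ is literally the same in both invariants and is left untouched by $\mathcal{G}$, the entire content is to produce, for each $p\in\text{K}_0(A)^+$, a continuous affine isomorphism of cones between the cone $\Delta_p^G$ built from $\text{T}(A)$ in Lemma \ref{l2} and the cone $\Delta_p^A=\text{T}_{\text{F}}(\overline{pAp})$ appearing in the Stevens invariant, and then to check that these identifications are compatible with the pairings $s_p$ and with the connecting maps $\lambda_{p,q}$.

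First I would fix $p\in\text{K}_0(A)^+$ and choose a projection $P\in M_n(A)$ with $[P]=p$; write $B=P M_n(A) P$, a hereditary $\text{C}^*$-subalgebra of $M_n(A)$, which is stably isomorphic to (and for present purposes interchangeable with) a corner $\overline{pAp}$ of $A$. The key map is restriction: a lower semicontinuous trace $\tau$ on $M_n(A)$ (equivalently on $A$, after the standard identification $\text{T}(M_n(A))\cong\text{T}(A)$) restricts to a trace $\tau|_B$ on $B$, and $\tau|_B$ is finite precisely when $\tau(P)=s^A(p)(\tau)<\infty$, i.e. precisely when $\tau\in\Delta_p'$. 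Conversely, given a finite trace $\psi$ on $B$, Theorem \ref{extend} extends it to a lower semicontinuous trace $\widetilde{\psi}$ on $M_n(A)$. I would check that $\tau\mapsto\tau|_B$ and $\psi\mapsto\widetilde{\psi}$ are mutually inverse \emph{after} passing to the quotient $\Delta_p^G=\Delta_p'/\!\sim$: two extensions of the same $\psi$ agree on $B$ and hence on the ideal generated by $B$, so they differ only "off the ideal", which is exactly the ambiguity the equivalence relation $\sim$ in Lemma \ref{l2} was designed to kill. This gives a bijection $\Delta_p^G\to\text{T}_{\text{F}}(\overline{pAp})$; affinity, positive homogeneity, and continuity (for the relevant topologies, which on $\Delta_p^G$ are governed by the bounded affine functions) are routine once the bijection is in place.

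Next I would check compatibility of the auxiliary structure. For the pairing: by construction $s_p^G(q)([\tau])=s^G(q)(\tau)=\tau(q)$, while the Stevens pairing $s_p^A(q)(\tau|_B)$ also evaluates $q$ (now regarded in $\text{K}_0(B)^+\cong\text{K}_0(A)^p\cap$(positive part)) against $\tau|_B$; since $q\le np$ forces the representing projection to lie in a matrix algebra over $B$, these agree. For the connecting maps: given $q\le p$ in $\text{K}_0(A)^+$, the map $\lambda_{p,q}^G$ of Theorem \ref{DefG} is induced by the inclusion $\Delta_p'\hookrightarrow\Delta_q'$, which under the restriction identification corresponds to further restricting a finite trace on $\overline{pAp}$ to $\overline{qAq}$ — i.e. exactly the Stevens map $\lambda_{p,q}^A$. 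Assembling these, the triple $(\mathrm{id}_{\text{K}_0},\mathrm{id}_{\text{K}_1},\{$restriction$\}_p)$ is an isomorphism in $\mathcal{S}$ from $\mathcal{G}(\mathcal{E}(A))$ to $\mathcal{S}(A)$.

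The main obstacle, I expect, is the well-definedness and bijectivity of the correspondence at the level of the quotient cone $\Delta_p^G$: one must show precisely that the equivalence relation $\sim$ collapses exactly the difference between distinct lower semicontinuous extensions of a given finite trace on the corner, neither more nor less. This requires knowing that $\widetilde{\psi}$ from Theorem \ref{extend} is in a suitable sense the "smallest" extension and understanding how an arbitrary lower semicontinuous $\tau$ with $\tau|_B=\psi$ can exceed it only on traces that vanish on the ideal generated by $P$ — which in turn uses that $\text{Aff}$-functions bounded on $\Delta_p'$ cannot distinguish such $\tau$'s. The other points (simplex structure of the base, the Riesz/hereditary conditions (1)--(4)) are already handled abstractly in Lemma \ref{l2} and Theorem \ref{DefG}, so they transfer for free.
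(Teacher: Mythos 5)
Your proposal is correct and follows essentially the same route as the paper: the isomorphism is the restriction map $[\tau]\mapsto\tau|_{\overline{pAp}}$ on each cone $\Delta_p^G$, with surjectivity supplied by the extension result (Theorem \ref{extend}) and injectivity by the fact that the equivalence relation $\sim$ identifies exactly those traces that agree on the corner (the paper realizes the separating bounded affine functions by self-adjoint elements of $\overline{pAp}$, which is the same delicate point you flag as the main obstacle). Your additional verification that the identifications intertwine the pairings $s_p$ and the connecting maps $\lambda_{p,q}$ is a point the paper leaves implicit, but it is not a different approach.
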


\begin{proof}
Let $A\in\mathcal{A}.$ 
We know that
$$\mathcal{E}(A)=((\text{K}_0(A),\text{K}_0(A)^+),\Sigma A, \text{K}_1(A),\text{T}(A)).$$
Let $\mathcal{G}(\mathcal{E}(A))=((\text{K}_0(A),\text{K}_0(A)^+),\Sigma A,\text{K}_1(A), \{\Delta_p^G\}_{p\in \text{K}_0(A)^+}).$
By the definition of $\mathcal{G}$, for each $p\in \text{K}_0(A)^+,$ 
$$\Delta_p'=\{\tau\in \text{T}(A):0\leq\tau(p)<\infty\},$$
$$\Delta_p^A=\Delta_p'/\sim,$$
where $\tau_1\sim\tau_2$ if and only if $f(\tau_1)=f(\tau_2)$ for all $f\in \text{Aff}(\text{T}(A))$ satisfying $f$ is bounded on $\Delta_p'$.
It is enough to show that  $$\Delta_p^{A}\cong \text{T}_{\text{F}}(\overline{pAp})$$ for each $p\in\text{K}_0(A)^+.$

For any $\tau\in \Delta_p'$, $\tau|_{\overline{pAp}}$ is a finite trace on $\overline{pAp}$. Define
 $$\gamma:\Delta_p^{A}\rightarrow \text{T}_{\text{F}}(\overline{pAp})$$
by $$\gamma([\tau])=\tau|_{\overline{pAp}}.$$
Thus $\gamma$ is a positive affine map and it is easy to see that 
  $\gamma$ is well-defined.
  
  Notice that any element in $\text{AffT}(\overline{pAp})$ can be realized by self-adjoint element in $\overline{pAp}$.
  Thus, if $[\tau_1]\neq [\tau_2]$ in $\Delta_p^A$, then there exists an element $f\in \text{AffT}(\overline{pAp})$ and a self-adjoint element $a\in\overline{pAp}$ such that $$\tau_1(a)=f(\tau_1)\neq f(\tau_2)=\tau_2(a).$$
  Therefore, $\gamma([\tau_1])\neq \gamma([\tau_2])$. $\gamma$ is injective.

What's left is to show that $\gamma$ is a surjection. Let $\phi$ be any finite trace on $\overline{pAp}$. By Theorem \ref{extend}, there is a lower semi-continuous trace $\widetilde{\phi}$ 
on $A$ such that $\widetilde{\phi}|_{\overline{pAp}}=\phi.$ Thus, $$\gamma(\widetilde{\phi})=\phi.$$
Therefore, $\gamma$ is a one-to-one and onto affine map. Thus, we have 
$$\mathcal{G}(\mathcal{E}(A))\cong \mathcal{S}(A).$$

\end{proof}

\begin{theorem}\label{DefF}
There is a natural nontrivial transformation $\mathcal{F}$ that maps the objects in $\mathcal{S}$ to the objects in $\mathcal{E}$.
\end{theorem}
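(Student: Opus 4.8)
The plan is to reverse the construction of $\mathcal{G}$: given an object $((G_0,G_0^+),\Sigma G, G_1, \{\Delta_p^G\}_{p\in G_0^+})$ in $\mathcal{S}$, I would assemble a cone $X$ out of the family $\{\Delta_p^G\}$ together with its compatibility maps $\{\lambda_{p,p'}^G\}$, and then check that $X$ carries the structure required of the fourth entry of an object in $\mathcal{E}$. First I would take the inverse (projective) limit of the inverse system $(\Delta_p^G,\lambda_{p,p'}^G)$ indexed by $G_0^+$ ordered by $\leq$; since the index set is directed (given $p,q$, both are $\leq p+q$) and the $\Delta_p^G$ are cones with simplex base, this limit $X$ is again a cone closed under addition and positive scalar multiplication, and it comes with canonical projections $\pi_p:X\to\Delta_p^G$ satisfying $\lambda_{p,p'}^G\circ\pi_p=\pi_{p'}$. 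The order on $X$ is the one induced by addition; I would verify $X$ is a complete lattice cone by exhibiting it as a cone of lower semicontinuous affine functions (equivalently, a closed subcone of a product $\prod_p\mathrm{Aff}_b(\Delta_p^G)$ cut out by the compatibility relations), using that each $\Delta_p^G$ is a lattice cone (Remark after the definition of $\mathcal{S}$) and that suprema of compatible bounded families can be taken coordinatewise — this is where conditions (1)--(4) and Proposition~\ref{wolf} get used.

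Next I would build the positive linear map $s^G:G_0^+\to\mathrm{Aff}(X)^+$. For $q\in G_0^+$ and $\tau\in X$, the point $\pi_q(\tau)\in\Delta_q^G$ lies in the base of a simplex and $q\in G_0^q$, so $s_q^G(q)(\pi_q(\tau))\in[0,\infty)$ is defined; more generally, for arbitrary $q$ I would set $s^G(q)(\tau)=\sup_{n}\,s_{?}^G(\cdot)$ along the system — concretely, $s^G(q)(\tau)$ should be $s_p^G(q)(\pi_p(\tau))$ for any $p\geq q$ with $q\in G_0^p$ (e.g.\ $p=q$ itself), and I must check via compatibility condition (2) that this is independent of the choice of $p$ and hence well defined, affine in $\tau$, additive and positively homogeneous in $q$, taking values in $[0,\infty]$ (the value $\infty$ arising for those $\tau$ not reachable from a finite level, i.e.\ when the compatible family is genuinely unbounded). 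The verification that $s^G$ is positive linear on $G_0^+$ is then routine from linearity of each $s_p^G$.

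Finally I would define $$\mathcal{F}:\{\text{Objects in }\mathcal{S}\}\to\{\text{Objects in }\mathcal{E}\}$$ by sending $((G_0,G_0^+),\Sigma G, G_1, \{\Delta_p^G\}_{p\in G_0^+})$ to $((G_0,G_0^+),\Sigma G, G_1, X)$ with the map $s^G$ just constructed; the first three coordinates are untouched, so the only content is that the fourth coordinate now satisfies the axioms of $\mathcal{E}$, which the previous two paragraphs establish. Nontriviality is clear since the assignment is not constant. I expect the main obstacle to be showing $X$ is a \emph{complete} lattice cone: one must show that an arbitrary bounded family in $X$ has a supremum in $X$, and the natural candidate — the coordinatewise supremum of the projections — need not a priori be a compatible family under the $\lambda_{p,p'}^G$ unless one invokes the hereditary property (condition (3)) and the Riesz decomposition property (condition (4) together with Proposition~\ref{wolf}) to push the suprema down consistently through the system. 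Carefully threading these compatibility conditions so that the supremum computed at level $p$ restricts correctly to the supremum at level $p'$ is the delicate point; once that is in hand, the rest of the proof is bookkeeping parallel to the proof of Theorem~\ref{DefG}.
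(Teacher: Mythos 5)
There is a genuine gap, and it lies in your very first move: taking $X$ to be the inverse limit of the system $(\Delta_p^G,\lambda_{p,p'}^G)$ over \emph{all} of $G_0^+$. An element of that inverse limit is a compatible family $(\tau_p)_{p\in G_0^+}$ with $\tau_p\in\Delta_p^G$ for \emph{every} $p$, and since each $s_p^G$ lands in $\mathrm{Aff}_b(\Delta_p^G)$ (bounded, real-valued), your pairing $s^G(q)(\tau)=s_q^G(q)(\pi_q(\tau))$ is always finite. Your parenthetical remark that $\infty$ arises for families ``not reachable from a finite level'' is inconsistent with your own construction: there are no such families in a full inverse limit. The paper instead takes $X$ to consist of compatible families $\{\tau_p\}_{p\in\Lambda}$ indexed by an arbitrary \emph{ideal} (hereditary sub-semigroup) $\Lambda\subseteq G_0^+$, with $s^G(q)(\{\tau_p\}_{p\in\Lambda})=s^G_q(q)(\tau_q)$ when $q\in\Lambda$ and $=\infty$ otherwise. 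This partial-domain feature is not cosmetic: it is exactly how extended-valued traces (finite on an ideal, $+\infty$ off it) are encoded, and without it $\mathcal{F}(\mathcal{S}(A))$ cannot recover $\mathrm{T}(A)$ in Corollary~\ref{welF}, which is the whole point of the theorem. Your $X$ would only see the everywhere-finite traces.

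The misidentification of $X$ also shifts where the real work is. With varying domains, the order is $\{\tau_p\}_{\Lambda_1}\preceq\{\phi_p\}_{\Lambda_2}$ iff $\Lambda_1\supseteq\Lambda_2$ and $\tau_p\leq\phi_p$ on $\Lambda_2$, so the join is the easy coordinatewise operation on $\Lambda_1\cap\Lambda_2$, while the \emph{meet} must live on the ideal $S(\Lambda_1,\Lambda_2)$ generated by $\Lambda_1\cup\Lambda_2$ --- i.e.\ one must \emph{extend} a coordinatewise infimum from $\Lambda_1\cap\Lambda_2$ outward to a strictly larger index set. The paper does this by defining a functional $\widehat{\alpha}$ on $\bigl\{\mathrm{Aff}_b(\Delta_p)^+\bigr\}_{p\in S(\Lambda_1,\Lambda_2)}$ via infima over Riesz-type decompositions $f=\lambda^*_{p,p_1}(f_1)+\lambda^*_{p,p_2}(f_2)$, checks $\widehat{\alpha}(\lambda^*_{p,q}(g))=\widehat{\alpha}(g)$ using the hereditary property (3), and then realizes $\widehat{\alpha}|_{\mathrm{Aff}_b(\Delta_p)^+}$ as a point $\psi_p\in\Delta_p$. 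You correctly sensed that conditions (3), (4) and Proposition~\ref{wolf} must enter, but you aimed them at ``coordinatewise suprema of bounded families,'' which is not where the difficulty is. To repair the argument you would need to (i) replace the inverse limit by the ideal-indexed families above, (ii) redefine $s^G$ with the value $\infty$ off the domain, and (iii) carry out the meet construction on $S(\Lambda_1,\Lambda_2)$, which is the genuinely nontrivial step.
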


\begin{proof}
Let $((G_0,G_0^+),\Sigma G, G_1, \{\Delta_p^G\}_{p\in G_0^+})$ be any object in $\mathcal{S}$.
We say that $\Lambda$ is an ideal of $G_0^+$ if  it is a hereditary sub-semigroup of $G_0^+$.
Let $X$ be the collection of all sets of the form
$\{\tau_p\}_{p\in \Lambda}$ satisfying\\
(1) $\Lambda$ is an ideal of $G_0^+$;\\
(2) $\tau_p\in\Delta_p$ for each $p\in\Lambda$ and 
 $\lambda_{p,q}(\tau_p)=\tau_q\mbox{ whenever } q<p.$\\
 The multiplication in $X$ is  the usual multiplication and the addition operation in $X$ is defined as follows:
 $$\{\tau_p\}_{p\in\Lambda_1}+ \{\phi_p\}_{p\in \Lambda_2}=\{\tau_p+\phi_p\}_{p\in \Lambda_1\cap\Lambda_2}.$$
We define an order relation $\preceq$ on $X$ by the following:
$$\{\tau_p\}_{p\in\Lambda_1}\preceq \{\phi_p\}_{p\in \Lambda_2}\mbox{ if and only if }
\Lambda_1\supseteq \Lambda_2\mbox{ and }\tau_p\leq\phi_p \mbox{ for }p\in \Lambda_2.$$
Then $X$ naturally has a lattice structure induced by the lattice structures of $\Delta_p$'s. That is,
$$\{\tau_p\}_{p\in\Lambda_1}\vee\{\phi_p\}_{p\in \Lambda_2}=\{\tau_p\vee\phi_p\}_{p\in \Lambda_1\cap\Lambda_2},$$
$$\{\tau_p\}_{p\in\Lambda_1}\wedge \{\phi_p\}_{p\in \Lambda_2}=\{\psi_p\}_{p\in \mathcal{S}\{\Lambda_1,\Lambda_2\}},$$
where $\mathcal{S}\{\Lambda_1,\Lambda_2\}$ is the ideal of $G_0^+$ generated by $\Lambda_1$ and $\Lambda_2,$ and $\psi_p$ will be defined later.
Let $\Lambda=\Lambda_1\cap\Lambda_2$. 
Now let's define a map $\widehat{\alpha}:\{\text{Aff}_b(\Delta_p)^+\}_{p\in S(\Lambda_1,\Lambda_2)}\rightarrow\mathbb{R}^+ $ by the following:

If $f\in \text{Aff}_b(\Delta_p)^+$ for some $p\in \Lambda_1$,  let
 $$\widehat{\alpha}(f)=\inf\{f_1(\tau_{p_1})+f_2(\tau_{p_2}\wedge\phi_{p_2}): f=\lambda^*_{p,p_1}(f_1)+\lambda^*_{p,p_2}(f_2),f_i\in\text{Aff}(\Delta_{p_i})^+,p_2\in \Lambda\}.$$
 
 If  $f\in \text{Aff}_b(\Delta_p)^+$ for some $p\in \Lambda_2$,  let
 $$\widehat{\alpha}(f)=\inf\{f_1(\phi_{p_1})+f_2(\tau_{p_2}\wedge\phi_{p_2}): f=\lambda^*_{p,p_1}(f_1)+\lambda^*_{p,p_2}(f_2),f_i\in\text{Aff}(\Delta_{p_i})^+,p_2\in \Lambda\}.$$
 
  If $f\in \text{Aff}(\Delta_p)^+$ for some $p\in S(\Lambda_1,\Lambda_2)$, let\\
    $\widehat{\alpha}(f)= \inf\{\widehat{\alpha}( f_1)+\widehat{\alpha}(f_2): f=\lambda_{p,p_1}^*(f_1)+\lambda_{p,p_2}^*(f_2),~f_i\in \text{Aff}(\Delta_{p_i})^+, p_i\in \Lambda_i, i=1,2\}. $


Claim: $\widehat{\alpha}(\lambda_{p,q}^*(g))=\widehat{\alpha}(g),$ for all $q\leq p\in S(\Lambda_1, \Lambda_2)$ and $g\in \text{Aff}_b(\Delta_q).$\\
It is enough to prove the above equation for $p\in \Lambda_1.$
We know that\\
 $\widehat{\alpha}(g)=\inf\{f_1(\tau_{q'})+f_2(\tau_e\wedge\phi_e): g=\lambda^*_{q,q'}(f_1)+\lambda^*_{q,e}(f_2),~f_2\in\text{Aff}(\Delta_e)^+,~e\in\Lambda, f_1\in\text{Aff}(\Delta_{q'})^+,~q'\leq q\}$ and \\
  $\widehat{\alpha}(\lambda^*_{p,q}(g))=\inf\{g_1(\tau_{p'})+g_2(\tau_{e'}\wedge\phi_{e'}): \lambda^*_{p,q}(g)=\lambda^*_{p,p'}(g_1)+\lambda^*_{p,e'}(g_2),e'\in\Lambda, g_1\in\text{Aff}(\Delta_{p'})^+,g_2\in\text{Aff}(\Delta_{e'})^+,~p'\leq p\}.$\\
  For any $g=\lambda^*_{q,q'}(f_1)+\lambda^*_{q,e}(f_2)$, we have
  $\lambda^*_{p,q}(g)=\lambda^*_{p,q'}(f_1)+\lambda^*_{p,e}(f_2)$.
  Thus, $$\widehat{\alpha}(\lambda^*_{p,q}(g))\leq\widehat{\alpha}(g).$$
  On the other hand, if 
  $\lambda^*_{p,q}(g)=\lambda^*_{p,p'}(g_1)+\lambda^*_{p,e'}(g_2)$, then 
  $$\lambda^*_{p,q}(g)\geq \lambda^*_{p,p'}(g_1) \mbox{ and }\lambda^*_{p,q}(g)\geq \lambda^*_{p,e'}(g_1).$$
By the hereditary property, there exist $h_1,~h_2\in\text{Aff}_b(\Delta_q)$ such that 
$$\lambda^*_{p,p'}(g_1)=\lambda^*_{p,q}(h_1)~~\mbox{ and }~~\lambda^*_{p,e'}(g_2)=\lambda^*_{p,q}(h_2).$$That is, 
$$\lambda^*_{p,q}(g)=\lambda^*_{p,q}(h_1)+\lambda^*_{p,q}(h_2).$$
Therefore, $g-h_1-h_2\in \ker(\lambda_{p,q}^*).$
Since any element in $\ker (\lambda^*_{p,q})$ can be written as a difference of two elements in $\ker (\lambda^*_{p,q}) \cap \text{Aff}(\Delta_q)^+$,
there are $j_1,j_2\in\ker (\lambda^*_{p,q}) \cap \text{Aff}(\Delta_q)^+$ such that
$$g+j_2=h_1+h_2+j_1.$$
By redefine $h_1$ as $h_1+j_1,$ we have $g\leq h_1+h_2.$
Therefore,
 $\widehat{\alpha}(\lambda^*_{p,q}(g))\geq\widehat{\alpha}(g)$ and the 
claim is true.

Then $\widehat{\alpha}$ is well-defined.
It is well-known that there is a one-to-one correspondence between the elements in $\Delta_p$ and the affine maps from $\text{Aff}_b(\Delta_p)^+$ to $\mathbb{R}^+$.
If the image of $\widehat{\alpha}$ is finite when restricted on $\text{Aff}_b({\Delta_p})^+$ for some $p,$ then there exists an element in $\Delta_p$ (we denote it by $\alpha_p$) such that
$\widehat{\alpha}(f)=f(\alpha_p)$ for all $f\in\text{Aff}({\Delta_p})^+$.

It is routine to check $$\widehat{\alpha}(f)=t\widehat{\alpha}(g_1)+(1-t)\widehat{\alpha}(g_2),$$ for all $t\in [0,1]$, $f,~g_i\in \text{Aff}_b(\Delta_{p})^+$ and $p\in G_0^+$ satisfying
 $f=tg_1+(1-t)g_2$.

Finally, for $p\in S(\Lambda_1,\Lambda_2)$, let $\psi_p=\alpha_p.$
If $q\leq p$, then 
$$g(\psi_q)=g(\alpha_q)=\lambda_{p,q}^*(g)(\alpha_p)=\lambda_{p,q}^*(g)(\psi_p)=g(\lambda_{p,q}(\psi_p))$$
for all $g\in \text{Aff}(\Delta_q)^+$. 
That is, $\psi_q$ and $\lambda_{p,q}(\psi_p)$ correspond to the same map
from $\text{Aff}(\Delta_q)$ to $\mathbb{R}^+$.
Therefore, $\psi_q=\lambda_{p,q}(\psi_p)$.
Thus, $\{\psi_p\}_{p\in S(\Lambda_1,\Lambda_2)}$ is an element in $X$.

Define  $s^G: G_0^+\rightarrow \text{Aff}(X)^+$ as follows:
$$
s^G(q)(\{\tau_p\}_{p\in \Lambda}) = \left\{
  \begin{array}{l l}
     s^G_q(q)(\tau_q) & \quad \text{if $q\in \Lambda$}\\
     \infty & \quad \text{otherwise}
   \end{array} \right.
$$
for all $q\in G_0^+.$ 

It is easy to see that $((G_0,G_0^+),\Sigma G, G_1, X)$ is an object in $\mathcal{E}.$
Let $$\mathcal{F}:\{\mbox{Objects in }\mathcal{S}\}\rightarrow\{\mbox{Objects in }\mathcal{E}\}$$ be the map defined by
sending
$$((G_0,G_0^+),\Sigma G, G_1, \{\Delta_p^G\}_{p\in G_0^+})\mapsto((G_0,G_0^+),\Sigma G, G_1, X),$$
which completes the proof.

\end{proof}





\begin{lemma}\label{close} Let $a,b\in A_+$ be such that $||a-b||<\varepsilon$. Then $(a-\varepsilon)_+\preccurlyeq b.$
\end{lemma}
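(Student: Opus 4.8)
The plan is to produce a single element $z\in A$ with $z^{*}z=(a-\varepsilon)_{+}$ and $zz^{*}\le b$; once this is done, $(a-\varepsilon)_{+}\approx zz^{*}$ with $zz^{*}\le b$, which is precisely the assertion $(a-\varepsilon)_{+}\preccurlyeq b$. First I would pass to the unitization $\widetilde A$: the element $z$ produced below will be a norm limit of products of the form $(\text{element of }A)(\text{element of }\widetilde A)(\text{element of }A)$, hence will lie in the ideal $A$ automatically. Set $\delta=\|a-b\|$, so that $\delta<\varepsilon$ and $a-\delta 1\le b$. The elementary but crucial observation is that the continuous function $g\colon[0,\infty)\to[0,1]$ defined by $g(t)=(t-\varepsilon)_{+}/(t-\delta)$ for $t>\delta$ and $g(t)=0$ for $t\le\delta$ satisfies $(t-\delta)\,g(t)=(t-\varepsilon)_{+}$ for every $t\ge 0$; applying functional calculus to $a$ gives
$$(a-\varepsilon)_{+}=g(a)^{1/2}(a-\delta 1)g(a)^{1/2}.$$

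Since $a-\delta 1\le b$ and $0\le g(a)\le 1$, conjugating this inequality by $g(a)^{1/2}$ yields $(a-\varepsilon)_{+}\le g(a)^{1/2}b\,g(a)^{1/2}=:c$. Setting $w:=b^{1/2}g(a)^{1/2}$ we have $c=w^{*}w$ and $ww^{*}=b^{1/2}g(a)b^{1/2}\le b$, so the problem is reduced to upgrading the order relation $(a-\varepsilon)_{+}\le w^{*}w$ to $(a-\varepsilon)_{+}\preccurlyeq ww^{*}$. For this I would take the polar decomposition $w=u\,c^{1/2}$ in $\widetilde A^{**}$, with $u$ a partial isometry and $u^{*}u=p$ the support projection of $c$. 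Because $(a-\varepsilon)_{+}\le c$, the support projection of $(a-\varepsilon)_{+}$ is dominated by $p$, so $p\,(a-\varepsilon)_{+}^{1/2}=(a-\varepsilon)_{+}^{1/2}$. Then $z:=u\,(a-\varepsilon)_{+}^{1/2}$ formally satisfies $z^{*}z=(a-\varepsilon)_{+}^{1/2}p\,(a-\varepsilon)_{+}^{1/2}=(a-\varepsilon)_{+}$ and $zz^{*}=u(a-\varepsilon)_{+}u^{*}\le u\,c\,u^{*}=ww^{*}\le b$.

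The remaining—and main—difficulty is that $u$ lives a priori only in $\widetilde A^{**}$, so one must check that $z=u\,(a-\varepsilon)_{+}^{1/2}$ actually belongs to $A$. I would handle this by a regularization estimate. Put $z_{m}:=w\,(c^{1/2}+\tfrac1m)^{-1}(a-\varepsilon)_{+}^{1/2}\in A$; using $w=u\,c^{1/2}$ one finds $z-z_{m}=u\big(1-g_{m}(c^{1/2})\big)(a-\varepsilon)_{+}^{1/2}$ with $g_{m}(t)=t/(t+\tfrac1m)$, whence $\|z-z_{m}\|\le\big\|\tfrac1m(c^{1/2}+\tfrac1m)^{-1}(a-\varepsilon)_{+}^{1/2}\big\|$. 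Here the inequality $(a-\varepsilon)_{+}\le c$ pays off: $\big\|(c^{1/2}+\tfrac1m)^{-1}(a-\varepsilon)_{+}^{1/2}\big\|^{2}=\big\|(c^{1/2}+\tfrac1m)^{-1}(a-\varepsilon)_{+}(c^{1/2}+\tfrac1m)^{-1}\big\|\le\big\|c\,(c^{1/2}+\tfrac1m)^{-2}\big\|\le 1$, the last bound being the value at $c$ of $t\mapsto t/(\sqrt t+\tfrac1m)^{2}\le 1$; hence $\|z-z_{m}\|\le\tfrac1m\to0$ and $z\in A$. Finally the identities $z^{*}z=(a-\varepsilon)_{+}$ and $zz^{*}\le b$ follow either from the formal computation above or, equivalently, as norm limits of $z_{m}^{*}z_{m}$ and $z_{m}z_{m}^{*}$, which completes the proof. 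The only ingredients beyond the functional calculus are the operator monotonicity of $t\mapsto t^{1/2}$ (used to deduce $\operatorname{supp}\!\big((a-\varepsilon)_{+}\big)\le p$ from $(a-\varepsilon)_{+}\le c$) and the fact that conjugation by a fixed element preserves the order relation.
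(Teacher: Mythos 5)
Your proof is correct. The paper disposes of this lemma in one line by quoting Lemma 2.2 of Kirchberg--R{\o}rdam, which hands over a contraction $d\in A$ with $(a-\varepsilon)_+=dbd^*$, and then flips $(db^{1/2})(db^{1/2})^*\approx (db^{1/2})^*(db^{1/2})=b^{1/2}d^*db^{1/2}\le b$. What you have written is, in effect, a self-contained proof of that cited lemma (it is essentially R{\o}rdam's original argument): the function $g(t)=(t-\varepsilon)_+/(t-\delta)$ with $\delta=\|a-b\|$ gives $(a-\varepsilon)_+=g(a)^{1/2}(a-\delta)g(a)^{1/2}\le g(a)^{1/2}bg(a)^{1/2}=w^*w$ with $ww^*\le b$, and the polar-decomposition-plus-regularization step manufactures $z\in A$ with $z^*z=(a-\varepsilon)_+$ and $zz^*\le b$. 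All the details check out: $g$ is continuous because $\delta<\varepsilon$, the support-projection domination follows from $(a-\varepsilon)_+\le c$, and the estimate $\|c(c^{1/2}+\tfrac1m)^{-2}\|\le1$ does give $\|z-z_m\|\le\tfrac1m$, so $z$ lies in $A$. The trade-off is the obvious one: the paper's route is two lines but rests on an external reference, while yours is longer but makes the lemma independent of \cite{KR}; the only structural difference at the end is that you build $z$ with $z^*z=(a-\varepsilon)_+$ directly rather than performing the paper's final $xx^*\approx x^*x$ flip.
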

\begin{proof} By Lemma 2.2 of \cite{KR},  there is $d\in A$ with $||d||\leq 1$ and $(a-\varepsilon)_+=dbd^*$. Hence, $(a-\varepsilon)_+\sim b^{1/2}d^*db^{1/2}\leq b.$
\end{proof}
\begin{lemma}\label{proj} Let $A$ be a $\text{C}^{*}$-algebra and $\tau$ be a lower semicontinuous trace on $A$.
Let $A^{\tau}$ be the ideal of $A$ defined  in Proposition \ref{fin}.
If $p\in \overline{{A^{\tau}}}$ (the closure of $A^{\tau}$) is a projection, then $\tau(p)<\infty.$
\end{lemma}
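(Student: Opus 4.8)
The plan is to exploit the fact that $A^{\tau}$, while not closed, is a dense, self-adjoint, two-sided ideal of its closure $\overline{A^{\tau}}$, and to extract from $A^{\tau}$ a \emph{positive} element that dominates a positive multiple of $p$; then monotonicity and homogeneity of the weight $\tau$ will force $\tau(p)<\infty$. (Note that lower semicontinuity of $\tau$ is not actually needed for this argument.)

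Concretely, I would first pick, by density of $A^{\tau}$ in $\overline{A^{\tau}}$, an element $c\in A^{\tau}$ with $\|p-c\|<\tfrac12$. Compressing and symmetrising, set $d:=\tfrac12\,p(c+c^{*})p$. Since $A^{\tau}$ is a two-sided ideal of $A$ by Proposition \ref{fin}, and it is self-adjoint (being the linear span of $A_{+}^{\tau}$), the element $d$ still lies in $A^{\tau}$, is self-adjoint, and sits in the corner $pAp$. A one-line estimate, using $p=\tfrac12 p\cdot p\cdot p+\tfrac12 p\cdot p\cdot p$, gives $\|p-d\|\le\|p-c\|<\tfrac12$. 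Now regard $d$ inside the unital C$^{*}$-algebra $pAp$ (with unit $p$): the bound $\|p-d\|<\tfrac12$ puts $\sigma_{pAp}(d)\subseteq(\tfrac12,\tfrac32)$, so that $d\ge\tfrac12 p$ there, and in particular $d$ is positive in $A$. Hence $d\in (A^{\tau})_{+}=A_{+}^{\tau}$, that is, $\tau(d)<\infty$.

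To conclude, from $0\le p\le 2d$ and the weight axioms of Definition \ref{weight}—additivity together with positivity yields $\tau(p)\le\tau(2d)$, and homogeneity yields $\tau(2d)=2\tau(d)$—I obtain $\tau(p)\le 2\tau(d)<\infty$. The only real content is the middle step, namely manufacturing the positive approximant $d$ of $p$ from within the possibly non-closed ideal $A^{\tau}$; compression to the corner $pAp$ makes this painless, since $\|p-c\|<\tfrac12$ automatically forces the symmetrised compression to be positive. Alternatively, once a nearby positive $b\in A_{+}^{\tau}$ has been produced in the same way, one could finish using Lemma \ref{close}: the relation $(p-\varepsilon)_{+}=(1-\varepsilon)p\preccurlyeq b$, combined with the invariance of $\tau$ under $\preccurlyeq$ (which follows from $\tau(z^{*}z)=\tau(zz^{*})$, cf.\ Proposition \ref{comm}), again gives $\tau(p)<\infty$.
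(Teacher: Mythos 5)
Your argument is correct, and it takes a genuinely different route from the paper. The paper approximates $p$ by positive elements $x_k\in A_+^{\tau}$, invokes Lemma \ref{close} to get $(p-\varepsilon)_+=(1-\varepsilon)p\preccurlyeq x_k$, and then uses the monotonicity of $\tau$ along $\preccurlyeq$ --- which rests on $\tau(z^*z)=\tau(zz^*)$ and hence (via Proposition \ref{comm}) on lower semicontinuity. You instead take an arbitrary $c\in A^{\tau}$ with $\|p-c\|<\tfrac12$, compress and symmetrise to $d=\tfrac12 p(c+c^*)p\in A^{\tau}\cap pAp$, and use the spectral estimate in the unital corner $pAp$ to get $d\ge\tfrac12 p$, so that $d\in(A^{\tau})_+=A_+^{\tau}$ and $\tau(p)\le 2\tau(d)<\infty$ by the bare weight axioms. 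This buys two things: it removes the dependence on Lemma \ref{close} and on lower semicontinuity of $\tau$, and it sidesteps a small point the paper glosses over, namely that density of $A^{\tau}$ in $\overline{A^{\tau}}$ only directly yields approximants in $A^{\tau}$, not in $A_+^{\tau}$ (the paper's sequence $\{x_i\}\subseteq A_+^{\tau}$ needs the standard extra remark that $J_+$ is dense in $(\overline{J})_+$ for a dense ideal $J$, or an argument like your compression trick). Your closing alternative via $(p-\varepsilon)_+\preccurlyeq b$ is essentially the paper's proof, so you have both routes in hand; the main one you present is the more elementary and slightly more careful of the two.
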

\begin{proof} If $p\in A^{\tau}$, then $\tau(p)<\infty$ by definition.
If $p\in\overline{A^{\tau}}\setminus A^{\tau}$, then there exists a sequence $\{x_i\}_{i=1}^{\infty}\subseteq A_+^{\tau}$ with $\lim\limits_{i\rightarrow\infty}x_i=p$. For any $\varepsilon>0$, there exists $k\in\mathbb{N}$ such that:
$$||x_k-p||<\varepsilon.$$ 
By Lemma \ref{close} $(p-\varepsilon)_+\preccurlyeq x_k$.
Since $(p-\varepsilon)_+=(1-\varepsilon)p$, 
$$\tau((1-\varepsilon)p)=\tau((p-\varepsilon)_+)\leq\tau( x_k)<\infty.$$ 
Therefore, $\tau(p)<\infty.$

\end{proof}

\begin{lemma}\label{equa} Let $\phi$ and $\phi'$ be two lower semi-continuous traces on a $\text{C}^*$-algebra $A$ and $e$ be a projection in $A$. 
Suppose that both $\phi$ and $\phi'$ are finite when restricted on $(\overline{eAe})_+$ and 
$$\phi(x)=\phi'(x) \mbox{ for all } x\in (\overline{eAe})_+.$$ Then $$\phi(x)=\phi'(x) \mbox{ for all } x\in (\overline{AeA})_+,$$ where $\overline{AeA}$ stands for the closed two-sided ideal generated by $e$.
\end{lemma}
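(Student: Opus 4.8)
The plan is to reduce the claim to a density/continuity argument inside the ideal $\overline{AeA}$, using the fact that $\overline{AeA}$ is the closed linear span of elements of the form $aea^*$ with $a\in A$. First I would observe that it suffices to prove $\phi(x)=\phi'(x)$ for a dense subset of $(\overline{AeA})_+$, since both $\phi$ and $\phi'$ are lower semicontinuous and, crucially, \emph{finite} on this ideal: indeed, any positive element $x\in\overline{AeA}$ lies in the closure of $A^{\phi}$ (because $eAe\subseteq A^{\phi}$ and $AeA$ is generated by elements Murray--von Neumann subequivalent to multiples of $e$), so Lemma~\ref{proj}-style reasoning, or more precisely Proposition~\ref{fin}, forces $\phi(x),\phi'(x)<\infty$; finiteness plus lower semicontinuity is what lets a pointwise identity on a dense set pass to the closure, via $(x-\varepsilon)_+\preccurlyeq y$ from Lemma~\ref{close}.

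The key computational step is the identity $\phi(aea^*)=\phi(a^*ea\cdot\text{something})$... more carefully: for $a\in A$ write $aea^*=(ae)(ae)^*$, and by Proposition~\ref{comm} (the trace identity $\phi(z^*z)=\phi(zz^*)$ for lower semicontinuous traces, valid since $ae\in A_2^{\phi}$ as $e\in A^{\phi}$) we get
$$\phi(aea^*)=\phi\big((ae)(ae)^*\big)=\phi\big((ae)^*(ae)\big)=\phi(ea^*ae).$$
Now $ea^*ae\in(\overline{eAe})_+$, so by hypothesis $\phi(ea^*ae)=\phi'(ea^*ae)$, and running the same identity backwards for $\phi'$ gives $\phi(aea^*)=\phi'(aea^*)$. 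By additivity of traces this extends to all finite sums $\sum_i a_iea_i^*$, which form a dense subset of $(\overline{AeA})_+$ (every positive element of the algebraic ideal $AeA$ is of this form up to the usual polarization/positivity manipulation, e.g. $x=x^{1/2}x^{1/2}$ with $x^{1/2}\in AeA$ written as $\sum b_jec_j$ and then symmetrized).

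The final step is to promote the identity from this dense positive subset to all of $(\overline{AeA})_+$. Given arbitrary $x\in(\overline{AeA})_+$ and $\varepsilon>0$, pick $y$ in the dense subset with $\|x-y\|<\varepsilon$; by Lemma~\ref{close}, $(x-\varepsilon)_+\preccurlyeq y$, hence $\phi((x-\varepsilon)_+)\le\phi(y)=\phi'(y)$, and symmetrically. Letting $\varepsilon\to 0$ and using that $(x-\varepsilon)_+\to x$ in norm together with lower semicontinuity (for the $\liminf$ direction) and the uniform finiteness bound coming from $y$ (for the $\limsup$ direction, since $\phi((x-\varepsilon)_+)\le\sup\phi(y)<\infty$ on a neighborhood) yields $\phi(x)=\lim_{\varepsilon\to0}\phi((x-\varepsilon)_+)=\lim_{\varepsilon\to0}\phi'((x-\varepsilon)_+)=\phi'(x)$.

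I expect the main obstacle to be the careful handling of the passage to the closure: one must be sure that $\phi$ and $\phi'$ are genuinely finite on all of $(\overline{AeA})_+$, not just on the algebraic ideal, so that the two-sided squeeze via $(x-\varepsilon)_+$ actually converges; this is exactly where lower semicontinuity cannot be dispensed with, and where one leans on Proposition~\ref{fin} to identify $\overline{AeA}$ as sitting inside a set on which the trace is controlled. A secondary technical point is verifying that finite sums $\sum a_iea_i^*$ are norm-dense in $(\overline{AeA})_+$ and that the trace identity $\phi(zz^*)=\phi(z^*z)$ applies to $z=ae$, which requires checking $ae\in A_2^{\phi}$ — routine given $e$ is a projection with $\phi(e)<\infty$.
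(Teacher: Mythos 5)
Your reduction to the dense set $\Omega_+=\{\sum_i a_iea_i^*\}$ and the computation $\phi(aea^*)=\phi((ae)^*(ae))=\phi(ea^*ae)=\phi'(ea^*ae)=\phi'(aea^*)$ via Proposition \ref{comm} are correct and coincide with the core of the paper's argument. The gap is in the passage to the closure. Your premise that $\phi$ and $\phi'$ are finite on all of $(\overline{AeA})_+$ is false: take $A=\mathcal{K}$, $e$ a rank-one projection and $\phi$ the canonical trace; then $\overline{AeA}=\mathcal{K}$ and $\phi$ is finite on $eAe$ but infinite on many positive compact operators. Lemma \ref{proj} gives finiteness only for \emph{projections} in $\overline{A^{\phi}}$, not for arbitrary positive elements of that closure, and Proposition \ref{fin} concerns $A^{\phi}$ itself, not its closure. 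Consequently your ``uniform finiteness bound $\sup\phi(y)<\infty$ on a neighborhood'' fails, and the squeeze does not close: from $(x-\varepsilon)_+\preccurlyeq y$ you only obtain $\phi((x-\varepsilon)_+)\le\phi(y)=\phi'(y)$ and $\phi'((x-\varepsilon)_+)\le\phi(y)$, i.e.\ two upper bounds by the same quantity; since $(x-\varepsilon)_+$ need not lie in $\Omega_+$, the middle equality $\lim_{\varepsilon\to 0}\phi((x-\varepsilon)_+)=\lim_{\varepsilon\to 0}\phi'((x-\varepsilon)_+)$ is never established.

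The repair is to approximate $x$ from \emph{below} rather than merely in norm: choose an increasing sequence $y_n\in\Omega_+$ with $y_n\le x$ and $y_n\to x$ (for instance $y_n=x^{1/2}u_nx^{1/2}$ with $u_n$ an approximate unit of $\overline{AeA}$ built from $\Omega_+$). Then $\phi'(y_n)\le\phi'(x)$ by positivity, while lower semicontinuity gives $\phi(x)\le\liminf_n\phi(y_n)=\liminf_n\phi'(y_n)\le\phi'(x)$, an inequality valid in $[0,\infty]$; exchanging $\phi$ and $\phi'$ finishes. This is essentially what the paper does: it introduces $\psi(x)=\sup\{\phi(y):y\preccurlyeq x,\ y\in(\overline{eAe})_+\}$ via Theorem \ref{extend}, notes $\psi\le\phi$ on $(\overline{AeA})_+$ directly from the supremum, proves $\phi=\psi$ on $\Omega_+$ exactly as you do, and then passes to general $x$ by an increasing sequence in $\Omega_+$, concluding that $\phi$ (and likewise $\phi'$) is given on $(\overline{AeA})_+$ by that supremum formula, which depends only on $\phi|_{(\overline{eAe})_+}$ and requires no finiteness of $\phi$ on the ideal.
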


\begin{proof} For $x\in A_+$, let $$\psi(x)=\sup\{\phi(y):~y\preccurlyeq x,y\in (\overline{eAe})_+\}.$$
By Theorem \ref{extend}, $\psi$ is a lower semi-continuous trace on $A$ and $\phi(x)=\psi(x)$ for all $x\in (\overline{eAe})_+$.
We only need to show that $\phi(x)=\psi(x)$ for all $x\in(\overline{AeA})_+$.

For any $x\in(\overline{AeA})_+,$ $y\in (\overline{eAe})_+$ satisfying $y\preccurlyeq x$, we have $$\psi(y)=\phi(y)\leq \phi(x).$$
 Taking supremum on both sides for all such $y$ we get 
\begin{equation}\label{eq}
\psi(x)\leq \phi(x) \text{ for all }x\in(\overline{AeA})_+.
\end{equation}
 Let $\Omega=\{\sum_{k=1}^n a_keb_k:a_k,~b_k\in A,~n\in\mathbb{Z}\}$ be a subset of $\overline{AeA}$. 
Claim:$$\phi(x)=\psi(x)\mbox{ for all }x\in \Omega_+.$$
In fact, for $x=aeb\in A_+$ with $a,b\in A$, we have
$$\phi((ae)^*(ae))=\phi(ea^*ae)<\infty$$ and $$\phi((eb)^*(eb))=\phi((eb)(eb)^*)=\phi(ebb^*e)<\infty.$$
Similarly, $$\psi((ae)^*(ae))=\psi(ea^*ae)<\infty,$$
 $$\psi((eb)^*(eb))=\psi((eb)(eb)^*)=\psi(ebb^*e)<\infty.$$
Therefore, by  Proposition \ref{comm}, $$\phi(x)=\phi(aeeb)=\phi(ebae)=\psi(ebae)=\psi(aeb)=\psi(x).$$
Thus, the claim is true.

Let $x$ be a positive element in $\overline{AeA}$ and $\{y_n\}$ be an increasing sequence in $\Omega_+$ with $\lim\limits_{n\rightarrow\infty}y_n= x$.
Then by the above claim $$\phi(x)\leq\lim\phi(y_n)=\lim\psi(y_n)\leq\psi(x),$$ where the first inequality is due to $\phi$ being lower semi-continuous.

Therefore, combined with the inequality (\ref{eq}), we get $\phi(x)=\psi(x)$ for all $x\in (\overline{AeA})_+$.
That is, $$\phi(x)=\sup\{\phi(y):~y\preccurlyeq x,y\in (eAe)_+\} \mbox{ for all } x\in (\overline{AeA})_+.$$
Similarly, $$\phi'(x)=\sup\{\phi'(y):~y\preccurlyeq x,y\in (eAe)_+\} \mbox{ for all } x\in (\overline{AeA})_+.$$
Therefore,
$$\phi'(x)=\phi(x) \mbox{ for all } x\in (\overline{AeA})_+.$$

\end{proof}

\begin{lemma}\label{wd}
Let $A$ be a $\text{C}^*$-algebra with the ideal property. Suppose that for any projection $e\in A$, there is a lower semi-continuous trace $\phi_e$ on
$\overline{AeA}$ satisfying
$$\phi_{e_1}(x)=\phi_{e_2}(x) \mbox{ for all } x\in (\overline{Ae_1A})_+\cap (\overline{Ae_2A})_+,$$ where $e_1,~e_2$ are two projections in $A$.
Then there is a lower semi-continuous trace $\phi$ on $A$ such that $$\phi(x)=\phi_e(x)$$
 for any $x\in (\overline{AeA})_+$ and any projection $e\in A$.
\end{lemma}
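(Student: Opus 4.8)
The plan is to build $\phi$ as the smallest lower semicontinuous trace on $A$ dominating all the $\phi_e$, presented explicitly by the formula
$$\phi(x)=\sup\{\phi_e(y):\ e\in\mathcal{P}(A),\ y\in(\overline{AeA})_+,\ y\preccurlyeq x\},\qquad x\in A_+,$$
equivalently $\phi=\sup_{e\in\mathcal{P}(A)}\widetilde{\phi_e}$, where $\widetilde{\phi_e}$ is the extension of $\phi_e$ from the hereditary subalgebra $\overline{AeA}$ to all of $A$ given by Theorem \ref{extend}. It is then immediate that $\phi$ is $[0,+\infty]$-valued, positively homogeneous, lower semicontinuous (a supremum of such functions), satisfies $\phi(u^*xu)=\phi(x)$ (each $\widetilde{\phi_e}$ is a trace), and is subadditive. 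So the real content lies in two assertions: that $\phi$ restricts to $\phi_e$ on each $(\overline{AeA})_+$, and that $\phi$ is superadditive.

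For the restriction I would argue as follows. Plainly $\phi(x)\ge\widetilde{\phi_e}(x)=\phi_e(x)$ for $x\in(\overline{AeA})_+$. For the reverse inequality, if $x\in(\overline{AeA})_+$ and $y\preccurlyeq x$ with $y\in(\overline{Ae'A})_+$ for some projection $e'$, then $y$ in fact lies in $(\overline{AeA})_+$ as well: closed ideals are hereditary and are closed under the relation $\approx$ (from $x_1\le x\in\overline{AeA}$ one gets $x_1\in\overline{AeA}$, and from $y=\sum z_n^*z_n$, $x_1=\sum z_nz_n^*$ one gets $z_nz_n^*\le x_1$, hence $z_n^*z_n\in\overline{Az_n^*z_nA}=\overline{Az_nz_n^*A}\subseteq\overline{AeA}$). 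The compatibility hypothesis then gives $\phi_{e'}(y)=\phi_e(y)\le\phi_e(x)$, and taking the supremum gives $\phi(x)\le\phi_e(x)$; this is the ``global'' counterpart of Lemma \ref{equa}.

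The crux is superadditivity. Given $x_1,x_2\in A_+$ and $\varepsilon>0$, I would pick projections $e_1,e_2$ and elements $y_i\in(\overline{Ae_iA})_+$ with $y_i\preccurlyeq x_i$ and $\phi_{e_i}(y_i)>\widetilde{\phi_{e_i}}(x_i)-\varepsilon$, and, after replacing $y_i$ by an $\approx$-equivalent element $y_i'\le x_i$ (still in $\overline{Ae_iA}$, with the same $\phi_{e_i}$-value), arrange $y_1'+y_2'\le x_1+x_2$. The decisive point is to locate a single projection $e\in A$ with $y_1',y_2'\in\overline{AeA}$; granting this, compatibility gives $\phi_e(y_1'+y_2')=\phi_e(y_1')+\phi_e(y_2')=\phi_{e_1}(y_1')+\phi_{e_2}(y_2')$, and since $y_1'+y_2'\preccurlyeq x_1+x_2$ we obtain $\phi(x_1+x_2)\ge\phi_e(y_1'+y_2')>\widetilde{\phi_{e_1}}(x_1)+\widetilde{\phi_{e_2}}(x_2)-2\varepsilon$, whence $\phi(x_1+x_2)\ge\phi(x_1)+\phi(x_2)$ on letting $e_1,e_2,\varepsilon$ vary.

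This reduces everything to the structural fact that, under the ideal property, the family of ideals $\{\overline{AfA}:f\in\mathcal{P}(A)\}$ is upward directed: given $e_1,e_2$, the ideal $\overline{Ae_1A}+\overline{Ae_2A}$ is again generated by projections and carries $e_1+e_2$ as a full element, and one must show it is generated, as an ideal, by a single projection $e$ of $A$ — for instance via the unit of $C^*(e_1,e_2)$ when that algebra is unital, the remaining cases requiring the ideal property. I expect this directedness to be the main obstacle: it is the only place where the ideal property is genuinely needed, and the only step not reducible to bookkeeping with Theorem \ref{extend} and the compatibility hypothesis. A Zorn's-lemma version of the argument — maximising over pairs $(J,\psi)$ with $J$ an ideal generated by projections and $\psi$ a compatible lower semicontinuous trace on $J$ — merely repackages the same difficulty into the inductive step of adjoining one more $\overline{AeA}$, which again, by Lemma \ref{equa}, comes down to gluing two compatible traces along the ideals of two projections.
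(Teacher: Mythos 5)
Your reduction is clean up to its last step: the supremum formula $\phi=\sup_e\widetilde{\phi_e}$ is automatically lower semicontinuous, homogeneous, unitarily invariant and subadditive, and your argument that $\phi$ restricts to $\phi_e$ on $(\overline{AeA})_+$ (using that $y\preccurlyeq x\in\overline{AeA}$ forces $y\in\overline{AeA}$, then the compatibility hypothesis) is correct. But the proof has a genuine gap exactly where you flag it: superadditivity requires that for any two projections $e_1,e_2$ there be a single projection $e$ with $\overline{Ae_1A}+\overline{Ae_2A}\subseteq\overline{AeA}$ (or at least with $y_1',y_2'\in\overline{AeA}$, and these elements may be full in their respective ideals). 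You do not prove this, and it does not follow from the ideal property in any evident way: that property says every ideal is generated by the \emph{set} of its projections, not by a single one, and the natural reduction --- approximate $(e_1+e_2-\varepsilon)_+$ by elements of $\sum_{i=1}^n\overline{Af_iA}$ for finitely many projections $f_i$ --- only returns you to the same question for finitely many projections, hence (by induction) for two. In real rank zero one can take $e$ to be a suitable member of an approximate unit of projections of $\overline{(e_1+e_2)A(e_1+e_2)}$, but the ideal property alone gives no such approximate unit, so as written the central step is an unproved (and to my knowledge nontrivial) structural claim.

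Your closing remark that a Zorn's-lemma argument ``merely repackages the same difficulty'' is also not accurate, and this is precisely how the paper avoids the problem. The paper's inductive step does not look for one projection dominating two: given a trace $\phi_J$ on an ideal $J$ and a projection $p\notin J$, it uses the splitting $(J')_+=J_++(\overline{ApA})_+$ of the positive cone of $J'=J+\overline{ApA}$ to \emph{define} $\phi_{J'}(x_1+x_2)=\phi_J(x_1)+\phi_p(x_2)$, and the ideal property enters only to check well-definedness: the discrepancy $x_1-y_1=y_2-x_2$ lies in $J\cap\overline{ApA}$, which is generated by its projections, and on the ideal of each such projection $\phi_J$ and $\phi_p$ agree by hypothesis (via Lemma \ref{equa}). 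No directedness of the family $\{\overline{AeA}\}$ is needed. So either supply a proof of the directedness claim, or switch to the additive-gluing/Zorn route, which bypasses it.
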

\begin{proof}
If $A$ is generated by a single projection, say $e$, then let $\phi=\phi_e$. 
In this case, if $e'$ is any projection in $A$, then $$\phi(x)=\phi_e(x)=\phi_{e'}(x) \mbox{ for all } x\in \overline{Ae'A}.$$ 

If $A$ is not generated by a single projection, let $J$ be an ideal of $A$ and $\phi_J$ be a lower semi-coutinuous trace on $J$ satisfying desired properties, 
i.e. $$\phi_J(x)=\phi_e(x)\mbox{ for all }x\in (\overline{AeA})_+,$$ where $e$ is any projection in $J$. 
Let $p$ be a projection outside $J$ and $J'$ be the closed ideal generated by $J$ and $p.$
Let $\phi_{J'}$ be an extension of $\phi_J$ on $J'$ defined as follows.

Since $(J')_+=J_++(\overline{ApA})_+, $ for any $x\in (J')_+$, there exist $x_1\in J_+$ and $x_2\in (\overline{ApA})_+$ such that $x=x_1+x_2.$
Define $$\phi_{J'}(x)=\phi_{J}(x_1)+\phi_{p}(x_2).$$
If there exist  $y_1\in J_+$ and $y_2\in (\overline{ApA})_+$ such that $x_1+x_2=x=y_1+y_2 $, then 
$$x_1-y_1=y_2-x_2\in J_+\cap(\overline{ApA})_+.$$
For any projection $e\in J\cap(\overline{ApA})$, we have
$$\phi_J(z)=\phi_e(z)=\phi_p(z)$$
for all $z\in\overline{AeA}.$
Since $A$ has the ideal property, $J\cap(\overline{ApA})$ is generated by projections.
Therefore, 
$\phi_J(x_1-y_1)=\phi_p(y_2-x_2)$. That is $$\phi_J(x_1)+\phi_p(x_2)=\phi_J(y_1)+\phi_p(y_2).$$
Thus, $\phi_{J'}$ is well-defined. It is obvious that $\phi_{J'}$ is lower semi-continuous trace on $J'$ satisfying desired properties.

To complete the proof, we apply Zorn's lemma.
Let X be the set of all pairs
$$(J, \phi_J)$$
where $J$ is a sub-ideal of I and $\phi_J$ is a lower semi-continuous trace satisfying 
the desired properties. 
Define the relation $<$ on $X$ by
$$(J_1, \phi_{J_1}) <(J_2, \phi_{J_2})\mbox{ if and only if  }J_1\subseteq J_2\mbox{ and } \phi_{J_2}|_{J_1}= \phi_{J_1}.$$ This is a partial ordering with the property that
any totally ordered subset has a maximal element. Zorn's lemma says
that $X$ has a maximal element, say $(\widehat{J}, \phi_{\widehat{J}})$. 
If $\widehat{J}$ is a proper subspace of A,
i.e., $\widehat{J}\neq A$, then the argument given before produces an extension of $\phi_{\widehat{J}}$ to a
larger ideal, contradicting the maximality of $(\widehat{J}, \phi_{\widehat{J}})$.



\end{proof}

\begin{corollary}\label{welF}
Let $\mathcal{I}$ be the class of all stably finite $\text{C}^*$-algebras satisfying the ideal property.
Then
$$\mathcal{E}(A)\cong\mathcal{F}(\mathcal{S}(A))$$
for all $A\in\mathcal{I}.$
\end{corollary}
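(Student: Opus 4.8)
The plan is to show that for $A \in \mathcal{I}$, the composite $\mathcal{F}(\mathcal{S}(A))$ recovers $\mathcal{E}(A)$; since the $\text{K}$-theoretic data $((\text{K}_0(A),\text{K}_0(A)^+), \Sigma A, \text{K}_1(A))$ is carried through both transformations unchanged, the whole content is to produce an isomorphism of cones $X \cong \text{T}(A)$ compatible with the pairing maps $s^G$ and $s^A$, where $X$ is the cone of compatible families $\{\tau_p\}_{p \in \Lambda}$ built in Theorem \ref{DefF} from $\{\text{T}_{\text{F}}(\overline{pAp})\}_{p \in \text{K}_0(A)^+}$. I would first fix notation: for $A \in \mathcal{I}$, $\mathcal{S}(A)$ has $\Delta_p^A = \text{T}_{\text{F}}(\overline{pAp})$ and $\lambda_{p,q}^A$ given by restriction, and $X = \mathcal{F}(\mathcal{S}(A))_4$ consists of families $\{\tau_p\}_{p \in \Lambda}$ with $\Lambda$ an ideal of $\text{K}_0(A)^+$ (equivalently, by the ideal property, the image in $\text{K}_0$ of the projections sitting in some closed two-sided ideal of $A$) and $\tau_p \in \text{T}_{\text{F}}(\overline{pAp})$ a compatible family under restriction.

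The core construction is a map $\Psi: \text{T}(A) \to X$: given a lower semicontinuous trace $\tau$ on $A$, let $\Lambda_\tau = \{[p] \in \text{K}_0(A)^+ : \tau(p) < \infty\}$ — which is an ideal of $\text{K}_0(A)^+$ because of Proposition \ref{fin} (the finiteness locus is an ideal) together with Lemma \ref{proj} (projections in the closure of $A^\tau$ still have finite trace), so $\Lambda_\tau$ is hereditary and closed under addition — and set $\Psi(\tau) = \{\tau|_{\overline{pAp}}\}_{[p] \in \Lambda_\tau}$. One checks $\tau|_{\overline{pAp}}$ is a finite trace on $\overline{pAp}$ and restriction is manifestly compatible, so $\Psi(\tau) \in X$; additivity and positive homogeneity of $\Psi$ are immediate, as is compatibility with the pairings, since $s^G([q])(\Psi(\tau)) = \tau(q)$ matches $s^A([q])(\tau)$ by definition. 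The inverse map $\Phi: X \to \text{T}(A)$ is where the ideal property does the real work: given $\{\tau_p\}_{p \in \Lambda}$, for each projection $e$ with $[e] \in \Lambda$ the finite trace $\tau_{[e]}$ on $\overline{eAe}$ extends by Theorem \ref{extend} to a lower semicontinuous trace $\phi_e$ on $A$, which by Lemma \ref{equa} is uniquely determined on $(\overline{AeA})_+$; the compatibility $\lambda^A_{p,q}(\tau_p) = \tau_q$ guarantees the family $\{\phi_e\}$ agrees on overlaps $(\overline{Ae_1A})_+ \cap (\overline{Ae_2A})_+$ (reduce to a common subprojection and invoke Lemma \ref{equa} again), so Lemma \ref{wd} assembles a single lower semicontinuous trace $\phi$ on the ideal generated by all such $e$, and extending by $+\infty$ off that ideal gives $\Phi(\{\tau_p\}) \in \text{T}(A)$.

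The remaining steps are to verify $\Phi \circ \Psi = \text{id}$ and $\Psi \circ \Phi = \text{id}$. For $\Psi \circ \Phi = \text{id}$: $\Phi(\{\tau_p\})$ restricted to $\overline{pAp}$ for $[p] \in \Lambda$ is $\phi_p = \tau_p$ by construction, and its finiteness locus is exactly the ideal of $\text{K}_0^+$ generated by $\Lambda$ — but since $\Lambda$ is already an ideal, this recovers $\{\tau_p\}_{p \in \Lambda}$ (one must check no projection outside $\Lambda$ acquires finite trace, which follows because $\Phi(\{\tau_p\})$ is identically $+\infty$ on positive elements outside the ideal generated by $\Lambda$, using Lemma \ref{proj}). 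For $\Phi \circ \Psi = \text{id}$: given $\tau \in \text{T}(A)$, the trace $\Phi(\Psi(\tau))$ agrees with $\tau$ on $(\overline{AeA})_+$ for every projection $e$ with $\tau(e) < \infty$ by the uniqueness in Lemma \ref{equa}, hence on the positive part of the ideal $A^\tau$ (generated by such projections via the ideal property), hence everywhere by lower semicontinuity and the fact that $\tau$ is $+\infty$ on the complement. Finally, naturality: a morphism in $\mathcal{E}_\mathcal{I}$ coming from a $\ast$-homomorphism $A \to B$ is transported through $\mathcal{F} \circ \mathcal{S}$ to the same morphism, since the cone maps are induced by restriction and the isomorphisms $\Psi_A, \Psi_B$ intertwine them — this is a diagram chase once the object-level isomorphism is in place. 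The main obstacle is the well-definedness and injectivity of $\Phi$: showing that the extensions $\phi_e$ from Theorem \ref{extend} patch consistently across all projections in all ideals requires the ideal property in an essential way (exactly as isolated in Lemmas \ref{equa} and \ref{wd}), and showing $\Phi(\Psi(\tau)) = \tau$ on all of $A_+$ — not just on the finite part — needs the observation that the ideal property forces $A^\tau$ (and its closure) to be generated by projections, so nothing is lost by only prescribing the trace via projections.
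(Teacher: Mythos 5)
Your proposal is correct and follows essentially the same route as the paper: the paper also reduces to showing $\text{T}(A)\cong X$ via the map $\tau\mapsto\{\tau|_{\overline{pAp}}\}$ on the finiteness locus, and establishes surjectivity by extending each $\tau_p$ with Theorem \ref{extend}, checking agreement on overlaps via Lemma \ref{lin} and Lemma \ref{equa}, gluing with Lemma \ref{wd}, and setting the trace to $+\infty$ off the resulting ideal. The only differences are presentational (you build an explicit two-sided inverse where the paper argues injectivity and surjectivity separately, and you add a naturality remark that belongs to Theorem \ref{isom} rather than this corollary).
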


\begin{proof}
Let $A$ be any stably finite $\text{C}^*$-algebra satisfying the ideal property.
We have
$$\mathcal{S}(A)=((\text{K}_0(A),\text{K}_0(A)^+),\Sigma A, \text{K}_1(A),\{\text{T}_{\text{F}}(\overline{pAp})\}_{p\in\text{K}_0(A)^+}).$$
Let $\mathcal{F}(\mathcal{S}(A))=((\text{K}_0(A),\text{K}_0(A)^+),\Sigma A, \text{K}_1(A),X)$, where $X$ is the collection of all sets of the form
$\{\tau_p\}_{p\in \Lambda}$ defined as in the proof of Theorem \ref{DefF}.
We will show that  $\text{T}A$ is isomorphic to $X$.

For any $\tau\in\text{T}(A)$, 
let $$A_+^{\tau}=\{x\in A: \tau(x)<\infty\}$$
 and $A^\tau$ be the linear span of $A^\tau_+$, which is an ideal of $A$ by Lemma \ref{fin}.
Let $$\mathcal{P}_{\tau}:=\{p\in \overline{A_+^\tau}:p \mbox{ is a projection}\},$$
By Lemma \ref{proj}, $\mathcal{P}_{\tau}\subseteq A_+^{\tau}$. 
Since $A$
is a $\text{C}^*$-algebra with the ideal property, $\overline{A_+^{\tau}}$ is generated by $\mathcal{P}_{\tau}$. 
Thus, $\mathcal{P}_{\tau}$ is not empty.  
For any $f\in\mathcal{P}_{\tau}$, we have  $\tau|_{\overline{fAf}}\triangleq\tau_f$ is a finite trace on $\overline{fAf}$.
Let $\Lambda_{\tau}$ be the sub-semigroup of $\text{K}_0(A)^+$ generated by $\mathcal{P}_{\tau}$.
For each $p\in\text{K}_0(A)^+$, we can define $\tau_p$ by extending of $\tau_f$'s for $f\in\mathcal{P}_{\tau}$ to matrix algebra.
Therefore, $\{\tau_f\}_{f\in \text{K}_0(A)^+}$ is an element in $X$.

Define $\beta: \text{T}(A)\rightarrow X$ by sending $\tau$ to $\{\tau_f\}_{f\in \text{K}_0(A)^+}$. 
It is obvious to see  that $\beta$ is injective since $A$ has the ideal property. 

Let $\{\tau_p\}_{p\in \Lambda}$ be any element in $X$.
By the definition of $X$,  $\tau_p$ is a finite trace on $\overline{pAp}$ for each $p$.
By Theorem \ref{extend}, for each $p$, we can extend $\tau_p$ to be a lower semi-continuous trace on the ideal $\overline{ApA}$, still denoted by $\tau_p.$

Claim: $\tau_{p_1}(x)=\tau_{p_2}(x)\mbox{ for all } x\in(\overline{Ap_1A})\cap (\overline{Ap_2A}),$
where $p_1,p_2$ are in $\Lambda.$ \\
In fact, since $A$ has the ideal property, it is enough to show 
$$\tau_{p_1}(x)=\tau_{p_2}(x)\mbox{ for all } x\in\overline{AqA}, $$ where $q$ is any projection in $(\overline{Ap_1A})\cap (\overline{Ap_2A}).$
By Lemma \ref{lin}, there are integers $m,~n$ such that $q\leq np_1$ and $q\leq mp_2.$ Thus, $$\tau_{p_1}(x)=\tau_{np_1}(x)=\tau_q(x)=\tau_{mp_2}(x)=\tau_{p_2}(x).$$
Therefore, the claim is true.

Applying Lemma \ref{wd}, we know that there is a lower semi-continuous trace $\widehat{\tau}$ on the ideal of $A$ generated by $\Lambda\cap A.$
Let $$
\tau(x) = \left\{
  \begin{array}{l l}
     \widehat{\tau}(x) & \quad \text{if $x$ is in the ideal generated by $\Lambda,$}\\
     \infty & \quad \text{otherwise}.
   \end{array} \right.
$$
Then $\tau$ is a lower semi-continuous trace on $A$ satisfying 
$$\beta(\tau)=\{\tau_p\}_{p\in\Lambda}.$$
Therefore, $\beta$ is also surjective. 
Thus, $\text{T}A$ is isomorphic to $X$.

\end{proof}

\begin{remark}
Theorem \ref{mainthm1} follows from Corollaries \ref{welG} and \ref{welF} immediately.
\end{remark}









\section{Main Theorem}
In this section, we extend the maps defined in section 3 to be  functors between 
two sub-categories of $\mathcal{S}$ and $\mathcal{E}$ and prove  Theorem 1.2.
Finally, we show that there are $\text{C}^*$-algebras without the ideal property whose Elliott invariant cannot be derived from the Stevens invariant.

\begin{definition}\label{compatible} Let $A,B$ be two $C^*$-algebras. Let $\alpha$ from $\mbox{K}_0(A)$
to $\text{K}_0(B)$ be a homomorphism, and $\xi:\text{T}B\rightarrow \text{T}A$ be an affine map. We say that $\alpha$ and $\xi$ are compatible if $$\tau(\alpha(x))=(\xi(\tau))(x)$$ for all $x\in\text{K}_0(A)_+$ and $\tau\in \text{T}B$.
\end{definition}

\begin{proposition}
Let $A,B$ be two $\text{C}^*$-algebras. If  there is a morphism 
$$\Theta=(\theta_0,\theta_1,\{\xi^p\}_{p\in \text{K}_0(A)^+}):\mathcal{S}(A)\rightarrow\mathcal{S}(B),$$
then $\theta_0$ and $\xi^{e}$ are compatible for all $e\in\text{K}_0(A)^+$.
\end{proposition}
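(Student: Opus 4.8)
The plan is to recognise the statement as the concrete transcription of the compatibility that a morphism of $\mathcal{S}$ carries with respect to the pairing maps $s_{p}$ — the analogue, for $\mathcal{S}$, of the commuting square imposed in the definition of a morphism of $\mathcal{E}$ — once the two objects are taken to be the Stevens invariants of $A$ and $B$ and the pairings $s^{A}_{p},\,s^{B}_{p}$ are read as honest evaluation of traces on $\text{K}_{0}$-classes. Throughout I would fix $e\in\text{K}_{0}(A)^{+}$ and set $q=\theta_{0}(e)$.

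\emph{Step 1 (bookkeeping).} First I would note that, $\theta_{0}$ being order preserving, every generator $0\le f\le ne$ of $\text{K}_{0}(A)^{e}$ is sent to an element with $0\le\theta_{0}(f)\le nq$; hence $\theta_{0}\big(\text{K}_{0}(A)^{e}\big)\subseteq\text{K}_{0}(B)^{q}$ and $\theta_{0}$ restricts to a homomorphism $\text{K}_{0}(A)^{e}\to\text{K}_{0}(B)^{q}$. On the trace side, by the definition of the Stevens invariant one has $\Delta^{A}_{e}=\text{T}_{\text{F}}(\overline{eAe})$ and $\Delta^{B}_{q}=\text{T}_{\text{F}}(\overline{qBq})$, so that $\xi^{e}$ is a continuous affine map $\text{T}_{\text{F}}(\overline{qBq})\to\text{T}_{\text{F}}(\overline{eAe})$, while the pairings are $s^{A}_{e}(x)(\sigma)=\sigma(x)$ for $x\in\text{K}_{0}(A)^{e}\cong\text{K}_{0}(\overline{eAe})$ and $\sigma\in\text{T}_{\text{F}}(\overline{eAe})$, and similarly over $B$.

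\emph{Step 2 (diagram chase).} Then I would invoke that, by definition of a morphism in $\mathcal{S}$, the family $\{\xi^{p}\}$ is compatible with the pairings: $(\xi^{p})^{*}\circ s^{A}_{p}=s^{B}_{\theta_{0}(p)}\circ\theta_{0}$ as maps $\text{K}_{0}(A)^{p}\to\text{Aff}_{b}(\Delta^{B}_{\theta_{0}(p)})$, for every $p\in\text{K}_{0}(A)^{+}$. Taking $p=e$, evaluating both affine functions at an arbitrary $\tau\in\text{T}_{\text{F}}(\overline{qBq})$, and substituting the explicit pairings from Step 1, the left-hand side is $(\xi^{e})^{*}\big(s^{A}_{e}(x)\big)(\tau)=s^{A}_{e}(x)\big(\xi^{e}(\tau)\big)=\xi^{e}(\tau)(x)$ and the right-hand side is $s^{B}_{q}\big(\theta_{0}(x)\big)(\tau)=\tau\big(\theta_{0}(x)\big)$. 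Thus $\tau(\theta_{0}(x))=\xi^{e}(\tau)(x)$ for all $x\in\text{K}_{0}(A)^{e}$ and all $\tau\in\text{T}_{\text{F}}(\overline{qBq})$; restricting $x$ to $\text{K}_{0}(A)^{e}\cap\text{K}_{0}(A)^{+}$ is exactly the compatibility of $\theta_{0}$ and $\xi^{e}$ in the sense of Definition~\ref{compatible}.

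I do not anticipate a real obstacle: the content is entirely the translation of the abstract structure maps of $\mathcal{S}(A)$ and $\mathcal{S}(B)$ into their C*-algebraic meaning, together with the one-line chase above. The only subtlety worth a sentence in the write-up is that ``compatible'' must here be understood at the corner $e$ — with $\tau$ ranging over $\text{T}_{\text{F}}(\overline{\theta_{0}(e)B\theta_{0}(e)})$ and $x$ over the positive part of $\text{K}_{0}(A)^{e}\cong\text{K}_{0}(\overline{eAe})$ — since $\xi^{e}(\tau)$ is only produced as a finite trace on $\overline{eAe}$ rather than as an element of $\text{T}B$.
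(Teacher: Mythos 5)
Your Step~1 and the diagram chase in Step~2 are fine as far as they go, and your closing caveat about reading ``compatible'' cornerwise is exactly the right reading of Definition~\ref{compatible} in this setting. The problem is the hypothesis you invoke at the start of Step~2. You assert that, \emph{by definition} of a morphism in $\mathcal{S}$, one has $(\xi^{p})^{*}\circ s^{A}_{p}=s^{B}_{\theta_{0}(p)}\circ\theta_{0}$. But the definition of a morphism in $\mathcal{S}$ given in this paper imposes only one condition on the maps $\xi^{p}$: the commutativity of the squares involving the restriction maps $\lambda^{H}_{q,q'}$ and $\lambda^{G}_{p,p'}$. Unlike the definition of a morphism in $\mathcal{E}$ (where the square with $s^{G}$, $s^{H}$ and $\zeta^{*}$ is explicitly required), no condition relating $\xi^{p}$ to the pairings $s_{p}$ is stated. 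So the identity you need is not available ``by definition,'' and it does not follow from the $\lambda$-compatibility alone: for instance, with $A=B=\mathbb{C}$ and $\theta_{0}=\mathrm{id}$, replacing every $\xi^{p}$ by $2\xi^{p}$ still makes all the $\lambda$-squares commute but destroys compatibility with $\theta_{0}$. Hence, with the definitions exactly as written in this paper, the proposition fails, and your proof has a genuine gap precisely at the point where you assume the pairing condition.

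For what it is worth, the paper itself gives no argument here: it simply cites Proposition~1.11 of \cite{JJ}, where the morphisms of the Stevens invariant are set up so as to carry the pairing condition you used. So you have correctly identified the only possible source of the conclusion — the rest of your write-up is a routine and correct unwinding — but the crucial identity has to be added to (or read into) the definition of $\mathcal{S}$-morphisms in the present paper before the one-line chase goes through; it cannot be derived from the $\lambda$-squares.
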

\begin{proof}
See 1.11 in \cite{JJ}.
\end{proof}

\begin{lemma}\label{lemma1} Let $A$ be a $\text{C}^{*}$-algebra with the ideal property and $B$ be a $\text{C}^*$-algebra satisfying \\
$(1)$ There exists a scaled ordered isomorphism
$\alpha: \text{K}_0A\rightarrow \text{K}_0 B$;\\
$(2)$ There is an isomorphism $\xi: \text{T}B\rightarrow \text{T}A$ which is compatible with $\alpha$. \\
Let $\tau\in \text{T}B$
and $I$ be the closed ideal of $A$ generated by the set $$\{e\in \mathcal{P}(A):~\tau(\alpha(e))<\infty\}.$$
Then $$\xi(\tau)(x)=+\infty \text{ for all } x\in A_+\backslash I_+.$$
\end{lemma}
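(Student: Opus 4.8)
The plan is to argue by contradiction: suppose there were some $x_0 \in A_+ \setminus I_+$ with $\xi(\tau)(x_0) < +\infty$, and produce from it a projection $e \in \mathcal{P}(A)$ with $e \notin I$ yet $\tau(\alpha(e)) < \infty$, contradicting the definition of $I$ as the ideal generated by exactly those projections $e$ with $\tau(\alpha(e)) < \infty$. The mechanism converting a finite-trace positive element into a finite-trace \emph{projection} outside $I$ is Lemma~\ref{proj}, combined heavily with the ideal property of $A$.

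\textbf{Setting up the auxiliary trace.} First I would transport $\tau$ back to $A$: set $\sigma = \xi(\tau) \in \text{T}(A)$, a lower semicontinuous trace on $A$. By Proposition~\ref{fin}, $A^{\sigma}$ is a (not necessarily closed) ideal of $A$ with $(A^\sigma)_+ = A_+^\sigma = \{x \in A_+ : \sigma(x) < \infty\}$, and $\overline{A^\sigma}$ is a closed two-sided ideal of $A$. Since $A$ has the ideal property, $\overline{A^\sigma}$ is generated by the projections it contains; call this set of projections $\mathcal{P}_\sigma$. By Lemma~\ref{proj}, every $f \in \mathcal{P}_\sigma$ satisfies $\sigma(f) < \infty$. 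Now the compatibility of $\alpha$ and $\xi$ (Definition~\ref{compatible}) gives $\sigma(f) = \xi(\tau)(f) = \tau(\alpha(f)) < \infty$ for each such $f$, so every projection in $\overline{A^\sigma}$ belongs to the generating set of $I$; hence $\overline{A^\sigma} \subseteq I$. The key point is that $\overline{A^\sigma}$ is in fact \emph{equal} to the ideal $\widehat I := \overline{A^\sigma}$ generated by $\{f \in \mathcal{P}(A): \sigma(f) < \infty\}$, and using Lemma~\ref{proj} once more plus the ideal property of $A$, one checks that $I \subseteq \overline{A^\sigma}$ as well: indeed each generator $e$ of $I$ has $\sigma(e) = \tau(\alpha(e)) < \infty$, so $e \in A_+^\sigma \subseteq \overline{A^\sigma}$. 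Thus $I = \overline{A^\sigma}$.

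\textbf{Conclusion.} Once $I = \overline{A^\sigma}$ is established, take any $x \in A_+ \setminus I_+$. If we had $\xi(\tau)(x) = \sigma(x) < \infty$, then $x \in A_+^\sigma \subseteq \overline{A^\sigma} = I$, so $x \in I_+$, a contradiction. Hence $\xi(\tau)(x) = +\infty$ for all $x \in A_+ \setminus I_+$, which is the assertion.

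\textbf{Main obstacle.} The delicate point is the identification $I = \overline{A^\sigma}$, specifically the inclusion $\overline{A^\sigma} \subseteq I$: a priori $A^\sigma$ could contain positive elements of finite trace that do not lie in the \emph{closed} ideal generated by finite-trace projections. This is exactly where the ideal property of $A$ is indispensable: it forces $\overline{A^\sigma}$, being a closed two-sided ideal, to be generated by its projections, and Lemma~\ref{proj} guarantees those projections have finite $\sigma$-value, hence land in $I$. Care is also needed because $\mathcal{P}_\sigma$ was defined using $\overline{A_+^\tau}$-type closures on the $B$-side versus the $A$-side; one must keep the transport along $\xi$ and $\alpha$ bookkept correctly, but since $\xi$ is an isomorphism of cones compatible with the $\text{K}_0$-isomorphism $\alpha$, this is routine. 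I expect no further difficulty beyond writing these compatibility checks carefully.
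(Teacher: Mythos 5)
Your proposal is correct and follows essentially the same route as the paper: set $\sigma=\xi(\tau)$, use the ideal property to see that $\overline{A^{\sigma}}$ is generated by its projections, apply Lemma~\ref{proj} and the compatibility of $\alpha$ and $\xi$ to place those projections in the generating set of $I$, and conclude $\overline{A^{\sigma}}\subseteq I$, whence $\sigma(x)=+\infty$ off $I_+$. The only (harmless) differences are that you phrase the final step as a contradiction and additionally verify the reverse inclusion $I\subseteq\overline{A^{\sigma}}$, which is not needed for the conclusion.
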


\begin{proof} Let $\phi=\xi(\tau).$ For any projection $p\in\overline{A^{\phi}}$, by Lemma \ref{proj}, $\phi(p)<+\infty$. Since $\xi$ is compatible with $\alpha$,  $$\tau(\alpha(p))=\xi(\tau)(p)=\phi(p)<+\infty. $$
That is $\alpha(p)\in B^{\tau}_+$ and  $p\in I$. 
Therefore $\mathcal{P}(\overline{A^{\phi}})\subseteq \mathcal{P}(I)$. 
Since $A$ is a $\text{C}^*$-algebra with the ideal property, $\overline{A^{\phi}}\subseteq I$.  So $$A\setminus I\subseteq A\setminus \overline{A^{\phi}}.$$ Hence $\phi(x)=+\infty$ for all $x\in A_+\setminus I_+.$

\end{proof}

The following lemma is well-known  (see Lemma 3.3.6 of \cite{Lin book}).
\begin{lemma}\label{lin}
If $p$ is a projection in $A$, $b\in A_+$ and $p$ is in the ideal generated by $b$, then there are $x_1,x_2,\cdots,x_k\in A$ such that $p=\sum\limits_{i=1}^kx_ibx_i^*$.
\end{lemma}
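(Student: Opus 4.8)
The plan is to realize $p$ as a positive element that sits inside the (not necessarily closed) algebraic ideal generated by $b$, and then use the fact that a projection lying in the closure of such an ideal can in fact be captured by finitely many of the generating terms. First I would recall that the closed ideal generated by $b$ is the closure of the linear span $\Omega = \{\sum_{i=1}^n y_i b z_i : y_i, z_i \in A, n \in \mathbb{N}\}$. Since $p$ lies in this closed ideal, for any $\varepsilon > 0$ there is an element $w \in \Omega$ with $\|p - w^*w\|$ small; more precisely, replacing a norm-approximant $v$ of $p$ by $v^*v/\|v\|^2$-type corrections, one can arrange a positive element $c \in \overline{AbA}$ of the algebraic form $c = \sum_{i} y_i b y_i^*$ (absorbing $b^{1/2}$ into the $y_i$, using $b = b^{1/2} b^{1/2}$ and positivity) with $\|p - c\| < 1/2$.

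Next I would apply the perturbation machinery: by Lemma \ref{close}, $(p - \tfrac12)_+ \preccurlyeq c$, and since $(p-\tfrac12)_+ = \tfrac12 p$, the projection $p$ is equivalent (in the sense of $\approx$) to a positive element dominated by $c = \sum_i y_i b y_i^*$. The key point is then that $p$ being a \emph{projection} upgrades this: because $\tfrac12 p \leq c$ and $p$ is a projection, one has $p \leq 2c$, so $p = p(2c)p \cdot (\text{correction})$; more cleanly, since $p \precsim c$ with $c$ a finite sum of terms each $\preccurlyeq$ a multiple of $b$, and $p$ is a projection hence ``scales'' under Cuntz subequivalence, there is $d \in A$ with $d^* d = p$ and $d d^* \leq 2c = \sum_i (\sqrt 2\, y_i) b (\sqrt 2\, y_i)^*$. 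Writing $d = p d$ and $x_i := p d e_i$ for suitable $e_i$ extracted from the $y_i$-decomposition of $2c$, and using that $p$ is a unit for $\overline{pAp}$, one obtains $p = d^* d = \sum_i x_i b x_i^*$ after reindexing. (Alternatively, and perhaps cleanest: once $p \leq 2c$ with $c = \sum_i y_i b y_i^*$, set $x_i = p (2c)^{-1/2} y_i$ computing the inverse in $\overline{pAp}$ where $p \leq 2c$ makes $2c$ invertible after compression; then $\sum_i x_i b x_i^* = p (2c)^{-1/2} (2c) (2c)^{-1/2} p = p$.)

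The main obstacle I expect is making the last algebraic step rigorous: passing from the approximate/Cuntz-subequivalence statement ``$p \precsim \sum y_i b y_i^*$'' to an exact equality ``$p = \sum x_i b x_i^*$''. The cleanest route is the compression trick: show $\tfrac12 p \leq c := \sum_{i=1}^n y_i b y_i^*$ (possible once $\|p - c\| < \tfrac12$, since then $p c p \geq p(p - \tfrac12 p)p$ in $\overline{pAp}$... — more carefully, $\|pcp - p\| = \|p(c-p)p\| \leq \|c - p\| < \tfrac12$ so $pcp \geq \tfrac12 p$ and $pcp$ is invertible in $\overline{pAp}$ with inverse $u$); then put $x_i := u^{1/2} p y_i$, so that $\sum_i x_i b x_i^* = u^{1/2} p \big(\sum_i y_i b y_i^*\big) p\, u^{1/2} = u^{1/2} (pcp) u^{1/2} = p$. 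Since this is a standard fact, I would keep the write-up short, citing Lemma 3.3.6 of \cite{Lin book} for the precise bookkeeping and only indicating the compression argument above.
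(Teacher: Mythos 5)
The paper offers no argument here at all: it records the statement as well known and cites Lemma 3.3.6 of \cite{Lin book}, which is also where your write-up ends up, so there is nothing on the paper's side to compare against and the only question is whether your sketch is sound. Your final ``compression'' argument is the standard proof and is correct as far as it goes: if $c=\sum_{i=1}^{n} y_i b y_i^*\ge 0$ satisfies $\|p-c\|<\tfrac12$, then $\|pcp-p\|\le\|c-p\|<\tfrac12$, so $pcp\ge \tfrac12 p$ is invertible in the unital algebra $pAp$ with positive inverse $u$, and $x_i:=u^{1/2}py_i$ gives $\sum_i x_i b x_i^*=u^{1/2}(pcp)u^{1/2}=p$. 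There are two soft spots. First, your production of such a $c$ is not right as stated: for $v=\sum_i y_i b z_i$ the element $v^*v=\sum_{i,j} z_i^* b\, y_i^* y_j\, b z_j$ is positive but is \emph{not} visibly of the form $\sum_i x_i b x_i^*$, and ``absorbing $b^{1/2}$ into the $y_i$'' does not repair this. The standard fix is polarization: with $d=\sum_i y_i b z_i$ and $\|p-d\|<\tfrac12$, write $\tfrac12(d+d^*)=c_+-c_-$ where $c_\pm=\tfrac14\sum_i (y_i\pm z_i^*)\,b\,(y_i\pm z_i^*)^*\ge 0$; then $pc_+p\ge p\,\tfrac12(d+d^*)\,p\ge\tfrac12 p$, which is all the compression step needs --- you never need $\|p-c_+\|<\tfrac12$ itself. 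Second, in your middle paragraph the inference from $(p-\tfrac12)_+\preccurlyeq c$ (via Lemma \ref{close}) to $\tfrac12 p\le c$ is invalid, since $\preccurlyeq$ is a Cuntz-type subequivalence and not order domination; but that whole detour is superseded by your final argument, so it costs you nothing once the first point is patched.
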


The following theorem is the main theorem of this section. 
\begin{theorem}\label{isom}
Let $\mathcal{E}_{\mathcal{I}}$ and $ \mathcal{S}_{\mathcal{I}}$ be two sub-categories of $\mathcal{E}$ and $\mathcal{S}$, respectively, where
$\mathcal{I}$ is the class of $\text{C}^*$-algebras with the ideal property.
Then $\mathcal{E}_{\mathcal{I}}$ and $ \mathcal{S}_{\mathcal{I}}$ are isomorphic. That is,
there are canonical functors $$\mathcal{F}:\mathcal{S}_{\mathcal{I}}\rightarrow \mathcal{E}_{\mathcal{I}},\quad
\mathcal{G}: \mathcal{E}_{\mathcal{I}}\rightarrow\mathcal{S}_{\mathcal{I}}$$ such that 
$$\mathcal{G}\circ \mathcal{F}=Id_{\mathcal{S}_{\mathcal{I}}},\quad\quad \mathcal{F}\circ \mathcal{G}=Id_{\mathcal{E}_{\mathcal{I}}}.$$
\end{theorem}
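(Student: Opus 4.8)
The plan is to upgrade the object assignments $\mathcal{F}$ and $\mathcal{G}$ of Theorems~\ref{DefF} and~\ref{DefG} to functors by prescribing their effect on morphisms, to check functoriality, and then to show that on $\mathcal{E}_{\mathcal{I}}$ and $\mathcal{S}_{\mathcal{I}}$ the two composites are (naturally isomorphic to) the identity functors, the object-level content being precisely Corollaries~\ref{welG} and~\ref{welF}. Since both functors leave the $\theta_0$- and $\theta_1$-coordinates of a morphism untouched, essentially all of the work lives in the cone coordinate.

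\textbf{Action of $\mathcal{G}$ on morphisms.} Given a morphism $\Theta=(\theta_0,\theta_1,\zeta)$ of $\mathcal{E}$ with $\zeta\colon Y\to X$, I would set $\mathcal{G}(\Theta)=(\theta_0,\theta_1,\{\xi^p\}_{p\in G_0^+})$, where $\xi^p\colon\Delta^H_{\theta_0(p)}\to\Delta^G_p$ is induced by $\zeta$. The commuting square defining an $\mathcal{E}$-morphism gives $s^H(\theta_0(p))(\tau)=s^G(p)(\zeta(\tau))$ for $\tau\in Y$, so $\zeta$ carries $\Delta'_{\theta_0(p)}$ into $\Delta'_p$; being affine and continuous, and since $f\circ\zeta$ is bounded on $\Delta'_{\theta_0(p)}$ whenever $f$ is bounded on $\Delta'_p$, it descends to a well-defined continuous affine $\xi^p$ on the $\sim$-quotients. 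The $\mathcal{S}$-morphism square for the $\xi^p$'s and condition (2) of that definition hold because going around either way just applies the single fixed map $\zeta$ and then restricts to a smaller ideal-cone; and $\mathcal{G}(\Theta)$ lies in $\mathcal{S}$ automatically, no further axiom being imposed on $\mathcal{S}$-morphisms.

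\textbf{Action of $\mathcal{F}$ on morphisms.} For a morphism $\Theta=(\theta_0,\theta_1,\{\xi^p\}_{p\in G_0^+})$ of $\mathcal{S}_{\mathcal{I}}$ I would define $\mathcal{F}(\Theta)=(\theta_0,\theta_1,\zeta)$ by
$$\zeta\bigl(\{\phi_e\}_{e\in\Lambda}\bigr)=\{\xi^p(\phi_{\theta_0(p)})\}_{p\in\Lambda'},\qquad \Lambda'=\{p\in G_0^+:\theta_0(p)\in\Lambda\}.$$
Here $\Lambda'$ is an ideal of $G_0^+$ because $\theta_0$ is an order-preserving homomorphism and $\Lambda$ is an ideal of $H_0^+$; the output family is compatible under the $\lambda^G_{p,q}$ by the $\mathcal{S}$-morphism square together with compatibility of $\{\phi_e\}$; and $\zeta$ is affine, $\preceq$-order-preserving and continuous for the cone topology. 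The one nontrivial check is that $\mathcal{F}(\Theta)$ satisfies the $\mathcal{E}$-morphism square, i.e. $s^H(\theta_0(q))(\phi)=s^G(q)(\zeta(\phi))$; unwinding the definitions of $s^G$, $s^H$ from Theorem~\ref{DefF}, this amounts exactly to the compatibility of $\theta_0$ with $\xi^q$ in the sense of Definition~\ref{compatible}, which holds for morphisms of $\mathcal{S}_{\mathcal{I}}$ by the Proposition following Definition~\ref{compatible}. Functoriality of both assignments on identity morphisms is immediate, and compatibility with composition reduces in the cone coordinate to ``a composite of induced quotient maps is the induced quotient map of the composite'' for $\mathcal{G}$, and to the evident compatibility of $\{\phi_e\}\mapsto\{\xi^p(\phi_{\theta_0(p)})\}$ with composing the $\xi$'s and the $\theta_0$'s for $\mathcal{F}$.

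\textbf{The composites.} On objects of $\mathcal{E}_{\mathcal{I}}$, the map $\beta$ sending $\tau\in X$ to the family $\{[\tau]\}_{\{p:\,s^G(p)(\tau)<\infty\}}$ is a natural isomorphism $\mathrm{Id}\to\mathcal{F}\circ\mathcal{G}$; this is the isomorphism of Corollary~\ref{welF}, and it is the ideal property (via Lemmas~\ref{proj} and~\ref{wd} and the argument of that corollary) that forces it to be bijective. Likewise, on objects of $\mathcal{S}_{\mathcal{I}}$ the map $\gamma$ of Corollary~\ref{welG} is a natural isomorphism $\mathcal{G}\circ\mathcal{F}\to\mathrm{Id}$. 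Naturality of $\beta$ and $\gamma$ is a diagram chase confined to the cone coordinate, and both squares commute by the very way $\zeta$ and $\{\xi^p\}$ were built from one another; identifying along $\beta$ and $\gamma$ then yields $\mathcal{G}\circ\mathcal{F}=\mathrm{Id}_{\mathcal{S}_{\mathcal{I}}}$ and $\mathcal{F}\circ\mathcal{G}=\mathrm{Id}_{\mathcal{E}_{\mathcal{I}}}$. The main obstacle is the construction of $\mathcal{F}$ on morphisms together with the naturality of $\beta$ and $\gamma$: assembling a single continuous affine $\zeta\colon Y\to X$ out of the family $\{\xi^p\}$, verifying that its values genuinely lie in $X$ (compatible families over a true ideal, continuity in the cone topology) and that the $\mathcal{E}$-morphism square holds, all quietly use the ideal property, the hereditary/Riesz conditions (1)--(4) on the $\Delta_p$'s, and the compatibility Proposition; this is where the bookkeeping is heaviest.
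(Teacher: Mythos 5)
Your proposal is correct in outline and reaches the theorem by a genuinely different organization of the work. The paper constructs the morphism-level cone maps analytically, inside the C*-algebras: for $\mathcal{F}$ it takes each finite trace $\xi^e(\tau_f)$ on $\overline{eAe}$, extends it to $\overline{AeA}$ via Theorem~\ref{extend}, proves a nontrivial Claim (using the commuting $\xi$-squares, Lemma~\ref{lin} and Lemma~\ref{equa}) that these extensions agree on intersections of the ideals $\overline{AeA}$, and then glues them into a single lower semicontinuous trace by the Zorn's-lemma argument of Lemma~\ref{wd}; similarly $\mathcal{G}$ is built by extending $\tau_f$ to all of $B$ by $+\infty$, applying $\zeta$, and cutting down. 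You instead work entirely in the abstract models of Theorems~\ref{DefG} and~\ref{DefF} ($\Delta_p^G=\Delta_p'/\!\sim$ and $X=$ compatible families over ideals of $G_0^+$), where both morphism assignments become one-line formulas and the compatibility checks reduce to the defining squares of the two categories plus the pairing compatibility of Proposition 4.2 (which you correctly identify as the one input needed for the $\mathcal{E}$-morphism square of $\mathcal{F}(\Theta)$, since the paper's definition of $\mathcal{S}$-morphisms does not impose it). This buys a cleaner, essentially formal morphism-level argument; the price is that all of the analytic content (trace extension, the gluing via the ideal property, the $+\infty$ dichotomy of Lemma~\ref{lemma1}) is concentrated in the object-level isomorphisms $\beta$ and $\gamma$ of Corollaries~\ref{welG} and~\ref{welF} and in their naturality, which you assert but do not carry out: in particular, upgrading the agreement $\widehat{\zeta}(\tau)|_{\overline{eAe}}=\zeta(\tau)|_{\overline{eAe}}$ for all projections $e$ to the global identity $\widehat{\zeta}(\tau)=\zeta(\tau)$ again needs the ideal property together with Lemma~\ref{lemma1} (this is the paper's Step III), and passing from the natural isomorphisms $\beta,\gamma$ to the strict identities $\mathcal{G}\circ\mathcal{F}=\mathrm{Id}$, $\mathcal{F}\circ\mathcal{G}=\mathrm{Id}$ requires fixing the object maps to be $\mathcal{S}(A)\mapsto\mathcal{E}(A)$ and $\mathcal{E}(A)\mapsto\mathcal{S}(A)$ on the nose and transporting your formulas along $\beta$ and $\gamma$, exactly as the paper does. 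These are completions of stated steps rather than gaps in the strategy.
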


\begin{proof} We divide the proof into three steps.

\underline{\textbf{Step I: Construction of the functor $\mathcal{F}:\mathcal{S}_{\mathcal{I}}\rightarrow \mathcal{E}_{\mathcal{I}}$.}}

Let $\mathcal{F}$ map the objects in $\mathcal{S}_{\mathcal{I}}$ to the objects in $\mathcal{E}_{\mathcal{I}}$ be defined by
$$\mathcal{F}(\mathcal{S}(\bullet))=\mathcal{E}(\bullet).$$
By Corollary \ref{welF}, this map is well-defined.
Let $A,B\in\mathcal{I}$ such that there is an arrow $\Phi=(\theta_0,\theta_1, \{\xi^e\}_{e\in \text{K}_0(A)^+})$  from $\mathcal{S}(A)$
to $\mathcal{S}(B)$.
We need to construct an affine map $\zeta$ from $\text{T}B$ to $\text{T}A$ which is  compatible with $\theta_0$.

For $\tau \in \text{T}B$, let $$B_+^{\tau}=\{x\in B: \tau(x)<\infty\}$$
 and $B^\tau$ be the linear span of $B^\tau_+$, which is an ideal of $B$ by Lemma \ref{fin}.
Let $$\mathcal{P}_{\tau}:=\{p\in \overline{B_+^\tau}:p \mbox{ is a projection}\},$$
$$\Lambda_{\tau}=\{f\in\mathcal{P}_{\tau}:f \text{ is in the image of }\theta_0\}.$$
Then $\Lambda_{\tau}\subseteq \mathcal{P}_{\tau}\subseteq B_+^{\tau}$, where the last inclusion is by Lemma \ref{proj}.

For any $f\in\Lambda_{\tau}$, we have $\tau(f)<\infty$ and $\tau|_{\overline{fBf}}\triangleq\tau_f$ is a finite trace on $\overline{fBf}$.
Let $e\in A$ be a pre-image of $f$ under $\theta_0$. Define  $\phi_{e}=\xi^e(\tau_f)$.
Since $\xi^e$ and $\theta_0|_{\overline{eAe}}$ are compatible, $\phi_e$ is a finite trace on $\overline{eAe}$.
We can extend $\phi_e$ to a lower semi-continuous trace (still denoted by $\phi_e$) on $\overline{AeA}$ by
$$\phi_e(x):=\sup\{\phi_e(y):y\preccurlyeq x,~y\in (\overline{eAe})_+\}, ~~\mbox{ for all } x\in (\overline{AeA})_+.$$

Claim: If $x\in J_0:=(\overline{Ae_1A})\cap(\overline{Ae_2A})$ is a positive element, where $e_1,~e_2$ are two projections in $A$
 satisfying $\theta_0(e_i)=f_i$ for some $f_i\in\Lambda_{\tau}$, $i=1,2,$ then $$\phi_{e_1}(x)=\phi_{e_2}(x).$$

In fact,
since $A$ is a $\text{C}^*$-algebra with the ideal property, $J_0$ is generated by projections inside it.
Let $I_0$ be a closed ideal of $B$ generated by the set $\{q: q=\theta_0(p) \mbox{ for some   }p\mbox{ in } J_0\}$.
 Then we have $$I_0=(\overline{Bf_1B})\cap(\overline{Bf_2B})~~~\mbox{ and }~~~~\tau_{f_1}|_{I_0}=\tau_{f_2}|_{I_0}.$$
 Let $p$ be any projection in $J_0$ and let $q=\theta_0(p)$.
By Lemma \ref{lin}, there exist natural numbers $n_1,n_2$ such that $p\leq n_ie_i$ for $i=1,2$.
 Therefore, by the compatible condition, the following diagrams are commutative:
$$\xymatrix@C=1cm{{\rm{T}}_{\rm{F}}(\overline{\widetilde{f}_1B\widetilde{f}_1})\ar[d]^i\ar[r]^{\xi^{\widetilde{e}_1}}&{\rm{T}}_{\rm{F}}
(\overline{\widetilde{e}_1A\widetilde{e}_1})\ar[d]^{i}\\
{\rm{T}}_{\rm{F}}(\overline{qBq})\ar[r]^{\xi^{p}}&{\rm{T}}_{\rm{F}}(\overline{pAp})},$$
$$\xymatrix@C=1cm{{\rm{T}}_{\rm{F}}(\overline{\widetilde{f}_2B\widetilde{f}_2})\ar[d]^i\ar[r]^{\xi^{\widetilde{e}_2}}&{\rm{T}}_{\rm{F}}(\overline{\widetilde{e}_2A\widetilde{e}_2})\ar[d]^{i}\\
{\rm{T}}_{\rm{F}}(\overline{qBq})\ar[r]^{\xi^{p}}&{\rm{T}}_{\rm{F}}(\overline{pAp})},$$ where $\widetilde{f}_i=n_i f$ and $\widetilde{e}_i=n_i e_i$ for $i=1,2$.
Since $${\rm{T}}_{\rm{F}}(\overline{\widetilde{f}_iB\widetilde{f}_i})={\rm{T}}_{\rm{F}}(\text{M}_{n_i}(\overline{f_iBf_i}))={\rm{T}}_{\rm{F}}(\overline{f_iBf_i}),$$
$${\rm{T}}_{\rm{F}}(\overline{\widetilde{e}_iA\widetilde{e}_i})={\rm{T}}_{\rm{F}}(\text{M}_{n_i}(\overline{e_iAe_i}))={\rm{T}}_{\rm{F}}(\overline{e_iAe_i}),$$ we can get the following commutative diagrams:
 $$\xymatrix@C=1cm{{\rm{T}}_{\rm{F}}(\overline{f_1Bf_1})\ar[d]^i\ar[r]^{\xi^{e_1}}&{\rm{T}}_{\rm{F}}(\overline{e_1Ae_1})\ar[d]^{i}\\
{\rm{T}}_{\rm{F}}(\overline{qBq})\ar[r]^{\xi^{p}}&{\rm{T}}_{\rm{F}}(\overline{pAp})},$$
$$\xymatrix@C=1cm{{\rm{T}}_{\rm{F}}(\overline{f_2Bf_2})\ar[d]^i\ar[r]^{\xi^{e_2}}&{\rm{T}}_{\rm{F}}(\overline{e_2Ae_2})\ar[d]^{i}\\
{\rm{T}}_{\rm{F}}(\overline{qBq})\ar[r]^{\xi^{p}}&{\rm{T}}_{\rm{F}}(\overline{pAp})}.$$ 
Therefore, $$i\circ \xi^{e_1}(\tau_{f_1})=\xi^{p}\circ i(\tau_{f_1}),\quad i\circ \xi^{e_2}(\tau_{f_2})=\xi^{p}\circ i(\tau_{f_2}).$$
 That is 
 $$\phi_{e_1}|_{\overline{pAp}}=\xi^{p}(\tau_{f_1}|_{\overline{qBq}}),\quad\phi_{e_2}|_{\overline{pAp}}=\xi^{p}(\tau_{f_2}|_{\overline{qBq}}).$$
Since $\tau_{f_1}|_{\overline{qBq}}=\tau_{f_2}|_{\overline{qBq}}$, $\phi_{e_1}|_{\overline{pAp}}=\phi_{e_2}|_{\overline{pAp}}$. 
By Theorem \ref{equa}, we have  $$\phi_{e_1}|_{\overline{ApA}}=\phi_{e_2}|_{\overline{ApA}}.$$ Therefore, $\phi_{e_1}|_{J_0}=\phi_{e_2}|_{J_0}$ since $J_0$ is generated by projections inside it.
 Thus, we have proved the claim.

Let $I$ be the ideal generated by  $\theta_0^{-1}(\Lambda_{\tau})$ (the preimage of $\Lambda_{\tau}$).
Applying Lemma \ref{wd}, there is a lower semi-continuous trace $\phi'$ on $I$ satisfying certain properties.
Extend $\phi'$ to a lower semi-continuous trace on $A$  (denoted by $\phi$) as follows:
$$
\phi(x) = \left\{
  \begin{array}{l l}
     \phi'(x) & \quad \text{if $x\in I,$}\\
     \infty & \quad \text{otherwise}.
   \end{array} \right.
$$
 Let $\zeta(\tau)=\phi.$ Then it remains to check that $\zeta $ is an affine map compatible with $\theta_0$.

For any $\tau\in \text{T}B$, $e\in\text{K}_0(A)_+$, let $f=\theta_0(e)$ and $\zeta(\tau)=\phi.$
If $\tau(f)<\infty$, then $$\zeta(\tau)(e)=\phi(e)=\phi_e(e)=\xi^*(\tau_f)(e)=\tau_f(\theta_0(e))=\tau(\theta_0(e)).$$
 If $\tau(f)=\infty$, then $e$ is not in the preimage of $\Lambda_{\tau}$ under $\theta_0.$ Thus, $$\zeta(\tau)(e)=\phi(e)=\infty=\tau(f).$$
 Therefore, $\zeta(\tau)(e)=\tau(\theta_0(e))$ for all $\tau\in \text{T}B$ and $e\in \text{K}_0(A)_+.$ 
That is, $\zeta$ is compatible with $\theta_0.$

 Let $\rho',~\rho''\in \text{T}B$ and $\rho=t\rho'+(1-t)\rho''$ for $t\in(0,1)$.
 Let $\mu=\zeta(\rho),$ $\mu'=\zeta(\rho')$, $\mu''=\zeta(\rho'')$.
 If $x$ is a positive element in the ideal generated by the preimage of $\Lambda_{\rho}$ under $\theta_0,$ (without loss of generality, we can assume $x\in \overline{AeA}$ and $\theta_0(e)=f$), then by the above construction, we have 
$$\zeta(\rho)(x)=\mu(x)=\mu|_{\overline{AeA}}(x):=\mu_e(x)=\xi^e(\rho_f)(x).$$
Since $\xi^e$ is an affine map, we have 
\begin{align*}
\xi^e(\rho_f)(x)
&=\xi^e(t\rho'_f+(1-t)\rho''_f)(x)\\
&=t\xi^e(\rho'_f)(x)+(1-t)\xi^e(\rho''_f)(x)\\
&=t\mu'_e(x)+(1-t)\mu''_e(x)\\
&=[t\zeta(\rho')+(1-t)\zeta(\rho'')](x).
\end{align*}
 If $x\in A_+$ is not in the ideal generated by the preimage of $\Lambda_{\rho}$,  by Lemma \ref{lemma1}, $$\zeta(\rho)(x)=\mu(x)=\infty.$$
 Since $\Lambda_{\rho}=\Lambda_{\rho'}\cap\Lambda_{\rho''},$ $x$ is not in the ideal generated by 
 $\theta_0^{-1}(\Lambda_{\rho'})\cap\theta_0^{-1}(\Lambda_{\rho''})$, where 
 $\theta_0^{-1}(\cdot)$ means the preimage set under $\theta_0.$
Thus, $$\zeta(\rho')(x)=\infty\mbox{ or }\zeta(\rho'')(x)=\infty.$$
Therefore, $$\zeta(\rho)(x)=t\zeta(\rho')(x)+(1-t)\zeta(\rho'')(x)$$ for all $x\in A_+$ and all $t\in(0,1)$.
As a consequence, $\Psi=(\theta_0,\theta_1,\zeta)$ is a morphism from $\mathcal{E}(A)$ to $\mathcal{E}(B)$ in $\mathcal{E}$.
Define $$\mathcal{F}(\Phi)=\Psi.$$
(1) It is obvious that $\mathcal{F}$ maps objects to objects and morphisms to morphisms.\\
(2) Let $A$ be a $\text{C}^*$-algebra  with the ideal property. For any $ \mu\in \text{T}A$, let $J$ be the ideal generated by projections in $\Lambda_{\mu}$.
Let $\mu'$ be a lower semi-continuous trace on $A$ defined by
$$
\mu'(x) = \left\{
  \begin{array}{l l}
     \mu(x) & \quad \text{if $x\in J,$}\\
     \infty & \quad \text{otherwise}.
   \end{array} \right.
$$
By Lemma \ref{lemma1}, $\mu'(x)=\mu(x)$ for all $x$ in $A$. Therefore,
$$\mathcal{F}(id_{\mathcal{S}(A))})=id_{\mathcal{E}(A)}=id_{\mathcal{F}(\mathcal{S}(A))}.$$
(3) Let $A_1,~A_2,~A_3$ be $\text{C}^*$-algebras with the ideal property. Suppose $\Psi_1:\mathcal{S}(A_1))\rightarrow \mathcal{S}(A_2)$, $\Psi_2:\mathcal{S}(A_2)\rightarrow \mathcal{S}(A_3)$
are two morphisms. By the functoriality of $\text{K}_0$ and $\text{T}$, we can get $$\mathcal{F}(\Psi_2\circ\Psi_1)=\mathcal{F}(\Psi_2)\circ\mathcal{F}(\Psi_1).$$

Therefore, $\mathcal{F}$ is a functor from the $\mathcal{S}_{\mathcal{I}}$ to $\mathcal{E}_{\mathcal{I}}$.\\



\underline{\textbf{Step II: Construction of the map $\mathcal{G}:\mathcal{E}_{\mathcal{I}}\rightarrow \mathcal{S}_{\mathcal{I}}$.}}

Let $\mathcal{G}$ map the objects in $\mathcal{E}_{\mathcal{I}}$ to the objects in $\mathcal{S}_{\mathcal{I}}$ be defined by
$$\mathcal{G}(\mathcal{E}(\bullet))=\mathcal{S}(\bullet).$$
By Corollary \ref{welG}, this map is well-defined.
Let $A,B\in\mathcal{I}$ such that there is an arrow $\Psi=(\theta_0,\theta_1, \zeta)$  from $\mathcal{E}(A)$
to $\mathcal{E}(B)$.
We need to construct an arrow from $\mathcal{S}(A)$ to $\mathcal{S}(B)$.
Let $e$ be any projection in $A$. First we want to construct an affine map $\xi^e$ from $\text{T}_{\text{F}}(\overline{\theta_0(e)B\theta_0(e)})$ to $\text{T}_{\text{F}}(\overline{eAe})$.

Let $f=\theta_0(e)$. For $\tau_f\in \text{T}_{\text{F}}(\overline{fBf})$,  define a trace $\tau'_f$ on the closed ideal $\overline{BfB}$ by
$$\tau'_f(x)=\sup\{\tau_f(y): y\preccurlyeq x, y\in (\overline{fBf})_+\}, \mbox{ for all } x\in \overline{BfB}.$$
Then by Theorem \ref{extend}, $\tau'_f$ is lower semi-continuous.
Let ${\tau}$ be a trace on $B$ defined by
$$
{\tau}(x) = \left\{
  \begin{array}{l l}
     \tau'_f(x) & \quad \text{if $x\in \overline{BfB}$}\\
     \infty & \quad \text{otherwise}.
   \end{array} \right.
$$
Then ${\tau}$ is a lower semi-continuous trace on $B$. Let ${\phi}=\zeta({\tau})$.
By Lemma \ref{lemma1}, $${\phi}(x)=\infty \mbox{ for any } x\in A\setminus (\overline{AeA}).$$
Let $\phi'_e={\phi}|_{\overline{AeA}}$ and $\phi_e={\phi}|_{\overline{eAe}}$.
Then $$\phi_e(e)={\phi}(e)=\zeta({\tau})(e)={\tau}(\theta_0(e))=\tau(f)<\infty.$$
Thus $\phi_e$ is a finite trace on $\overline{eAe}$.
Define $\xi^e(\tau_f)=\phi_e $.
Then it is routine to check that $\xi^{e}$ is an affine map.

 Let $e'\in\mathcal{P}(A)$ and $f'\in\mathcal{P}(B)$ be such that $e'\leq e,~f'\leq f$ and $\theta_0(e')=f'$.
 Let $\tau_{f}\in {\rm{T}}_{\rm{F}}(\overline{fBf})$ be any finite trace.
 We need to show the following diagram commutes:
$$\xymatrix@C=1cm{{\rm{T}}_{\rm{F}}({\overline{fBf}})\ar[d]^i\ar[r]^{\xi^{e}}&{\rm{T}}_{\rm{F}}(\overline{eAe})\ar[d]^{i}\\
{\rm{T}}_{\rm{F}}(\overline{f'Bf'})\ar[r]^{\xi^{e'}}&{\rm{T}}_{\rm{F}}(\overline{e'Ae'})}.$$
That is, we need to show
$$i\circ\xi^{e}(\tau_f)=\xi^{e'}\circ i(\tau_f).$$
 Let $i(\tau_f)=\tau_{f'}$ and let ${\tau},{\tau'}\in \text{T}B$ be extensions of $\tau_f$ and $\tau_{f'}$ respectively defined as above.
 Let ${\phi}=\zeta({\tau})$ and ${\phi'}=\zeta({\tau'})$.
 By the definition of $\tau_{f'}$, we know that $\tau_{f'}=\tau_f|_{\overline{f'Bf'}}$.
By Theorem \ref{equa}, ${\tau'}={\tau}$ on $\overline{Bf'B}$. Thus, 
$$t{\tau}(x)+(1-t)\tau'(x)={\tau'(x)},\mbox{ for all }x\in \overline{Bf'B} \mbox{ and }0\leq t\leq1.$$
If $x\notin \overline{Bf'B}$, then ${\tau'}(x)=\infty$.
 Thus, 
$$t{\tau}(x)+(1-t)\tau'(x)={\tau'(x)},\mbox{ for all }x\notin \overline{Bf'B} \mbox{ and }0\leq t<1.$$
 Therefore, we have the following equality
 $$t{\tau}+(1-t)\tau'={\tau'},\mbox{ for any }0\leq t<1.$$
 Taking $t=1/2$ and since  $\zeta$ is an affine map, we get $$\frac{1}{2}\zeta({{\tau}})+\frac{1}{2}\zeta({\tau'})=\zeta({\tau'}).$$
  So $$\frac{1}{2}\phi|_{\overline{e'Ae'}}+\frac{1}{2}\phi'|_{\overline{e'Ae'}}=\phi'|_{\overline{e'Ae'}}.$$
  Since both $\phi|_{\overline{e'Ae'}}$ and $\phi'|_{\overline{e'Ae'}}$ are finite,
    $$i\circ\xi^{e}(\tau_f)={\phi}|_{\overline{e'Ae'}}={\phi'}|_{\overline{e'Ae'}}=\xi^{e'}\circ i(\tau_f).$$ Thus $\xi^{e}$ and $\xi^{e'}$ are compatible.
 Therefore, $\Phi=(\theta_0, \theta_1, \{\xi^e\}_{e\in\Sigma(A)})$
  is a morphism from $\mathcal{S}(A)$ to $\mathcal{S}(B)$  in $\mathcal{S}_{\mathcal{I}}$.
Define $$\mathcal{G}(\Psi)=\Phi.$$ The following properties of $\mathcal{ G}$ are obvious:\\
(1) $\mathcal{G}$ maps objects to objects and morphisms to morphisms by the above construction.\\
(2) For every $\text{C}^*$-algebra $A$ with the ideal property $$\mathcal{G}(id_{\mathcal{E}(A))})=id_{\mathcal{S}(A)}.$$\\
(3) Let  $A_1,~A_2,~A_3$ be $\text{C}^*$-algebras. Suppose that $\Phi_1:\mathcal{E}(A_1)\rightarrow\mathcal{E}(A_2)$, $\Phi_2:\mathcal{E}(A_2)\rightarrow \mathcal{E}(A_3)$
are two morphisms. Then $$\mathcal{G}(\Phi_2\circ\Phi_1)=\mathcal{G}(\Phi_2)\circ\mathcal{G}(\Phi_1).$$
Therefore, $\mathcal{G}$ is a functor from the $\mathcal{E}_{\mathcal{I}}$ to $\mathcal{S}_{\mathcal{I}}$.\\

\textbf{Step III: Check the identity of the theorem.}

(1) Let $A,~B$ be $\text{C}^*$-algebras with the ideal property. 
Suppose we have the following maps
$$\xymatrix@C=1cm{\mathcal{S}(A)\ar[d]^{\Phi=(\theta_0, \theta_1,\xi)}\ar[r]^{\mathcal{F}}&\mathcal{E}(A)\ar[d]^{\Psi=(\theta_0,\theta_1,\zeta)}\ar[r]^{\mathcal{G}}&\mathcal{S}(A)\ar[d]^{\widehat{\Phi}=({\theta_0},\theta_1,\widehat{\xi})}\\
\mathcal{S}(B)\ar[r]^{\mathcal{F}}&\mathcal{E}(B)\ar[r]^{\mathcal{G}}&\mathcal{S}(B)},$$
where $\xi=\{\xi^p\}_{p\in\Sigma(A)}$, $\widehat{\xi}=\{\widehat{\xi^p}\}_{p\in\Sigma(A)}$, 
$\mathcal{F}(\Phi)=\Psi$ and $\mathcal{G}(\Psi)=\widehat{\Phi}.$ We need to show $\widehat{\Phi}=\Phi.$

Let $\tau_f\in \text{T}_{\text{F}}(\overline{fBf})$ be any finite trace, where $f$ is a projection in $B$ with $\theta_0(e)=f$.
By the construction of step II and I, we have $$\widehat{\xi}^e(\tau_f)=\zeta(\tau)|_{\overline{eAe}}=\xi^e(\tau_f).$$
Therefore, $\xi=\widehat{\xi}$ and $\Phi=\widehat{\Phi}.$ That is $\mathcal{G}\circ\mathcal{F}=id_{\mathcal{S}(\mathcal{I})}.$

(2) To prove $\mathcal{F}\circ\mathcal{G}=id_{\mathcal{E}(\mathcal{I})},$ assume we have the following maps
$$\xymatrix@C=1cm{\mathcal{E}(A)\ar[d]^{\Psi=(\theta_0,\theta_1, \zeta)}\ar[r]^{\mathcal{G}}&\mathcal{S}(A)\ar[d]^{\Phi=(\theta_0,\theta_1,\xi)}\ar[r]^{\mathcal{F}}&\mathcal{E}(A)\ar[d]^{\widehat{\Psi}=(\theta_0,\theta_1,\widehat{\zeta})}\\
\mathcal{E}(B)\ar[r]^{\mathcal{G}}&\mathcal{S}(B)\ar[r]^{\mathcal{F}}&\mathcal{E}(B)},$$
where $\xi=\{\xi^p\}_{p\in\Sigma(A)}$, $\mathcal{G}(\Psi)=\Phi$ and $\mathcal{F}(\Phi)=\widehat{\Psi}.$ We need to show that $\widehat{\Psi}=\Psi.$

Let $\tau\in \text{T}B$, $\zeta(\tau)=\phi$ and $\widehat{\zeta}(\tau)=\widehat{\phi}$. Then
$$\widehat{\phi}|_{\overline{eAe}}=\widehat{\zeta}(\tau)|_{\overline{eAe}}=\xi^e(\tau_f)=\zeta(\tau)|_{\overline{eAe}}=\phi|_{\overline{eAe}}$$
for all pairs $e\in \mathcal{P}(A), ~f\in \mathcal{P}(B)$ with $\theta_0(e)=f$ and $f\in B^{\tau}$.
Therefore, $\widehat{\zeta}(x)=\zeta(x)$ for all $x$ in the ideal $I$ which is generated by projections in the set of $\theta_0^{-1}(B^{\tau}).$
By Lemma \ref{lemma1}, $\phi(x)=+\infty$ for all $x\in A\setminus I$.
By our construction of $\widehat{\phi}$, we know $\widehat{\phi}(x)=+\infty$ for all $x\in A\setminus I$.
Therefore, $\widehat{\phi}=\phi$, which completes the proof.

\end{proof}

\begin{remark}
Theorem \ref{mainthm2} follows from the above theorem.
\end{remark}




\begin{example}\label{countex}[Counter-Example]
In \cite{SR}, Shaloub Razak classified a class of $\text{C}^*$-algebras which are inductive limits of certain specified building blocks by using
their Elliott invariants. 
Those $\text{C}^*$-algebras Razak classified are simple, stably  projectionless and with trivial $\text{K}$-theory.  
Their Elliott invariants cannot be derived from their Stevens invariants since their Stevens invariants are all trivial. 
\end{example}





\end{document}